\documentclass[10pt,leqno]{amsart}
\usepackage{graphicx}
\baselineskip=16pt

\usepackage{indentfirst,csquotes}

\topmargin= .5cm
\textheight= 20cm
\textwidth= 32cc
\baselineskip=16pt

\evensidemargin= .9cm
\oddsidemargin= .9cm

\usepackage{amsmath}
\usepackage{amssymb}
\usepackage{amscd}
\usepackage{latexsym}
\usepackage{amsthm}
\usepackage{lmodern}
\usepackage{lmodern}
\usepackage{fixcmex}
\usepackage{fixcmex}
\usepackage{mathtools}
\usepackage{comment}
\usepackage[cp1251]{inputenc}
\usepackage[refpage]{nomencl}
\usepackage{relsize}
\usepackage[all]{xy}
\usepackage{graphicx}
\usepackage{stackengine}
\usepackage[english]{babel}


\def\today{\number\day\space   \ifcase\month\or January\or
  February\or March\or April\or May\or June\or July\or August\or
  September\or October\or November\or December\fi \space\number\year}

\def\?{{\bf ??}}

\def\Qed{\hbox to 0.5em{ }\nobreak\hfill\hbox{$\square$}}

\let\bs\backslash
\let\del\partial


\def\CC{{\mathbb C}}

\def\NN{{\mathbb N}}

\def\QQ{{\mathbb Q}}

\def\RR{{\mathbb R}}

\def\ZZ{{\mathbb Z}}
\def\w{\omega}

\let\div\relax
\DeclareMathOperator{\div}{div}
\DeclareMathOperator{\Pic}{Pic}

\def\CH{\operatorname{CH}}

\def\div{\operatorname{div}}

\def\I{\operatorname{I}}

\def\Pic{\operatorname{Pic}}

\def\alg{\operatorname{alg}}
\def\Rat{\operatorname{Rat}}
\def\cd{\operatorname{codim}}

\def\reg{\operatorname{reg}}
\def\cal{\mathcal}


\def\dda{\downarrow\kern-0.78em\raise0.25em\hbox{$\downarrow$}}
\def\dua{\uparrow\kern-0.78em\raise0.25em\hbox{$\uparrow$}}

\makeatletter

\let\s@vedleft\left
\let\s@vedright\right
\renewcommand{\left}{\mathopen{}\mathclose\bgroup\s@vedleft}
\renewcommand{\right}{\aftergroup\egroup\s@vedright}
\makeatother

\DeclarePairedDelimiter\abs{\lvert}{\rvert}
\DeclarePairedDelimiter\norm{\Vert}{\Vert}
\DeclarePairedDelimiterX\innerp[2]{\langle}{\rangle}{#1,#2}
\DeclarePairedDelimiterXPP\pnorm[2]{}\lVert\rVert{_{#2}}{#1}

\def \I{{\text{i}}}
\def \bs{{\backslash}}

\def \CC{{\mathbb C}}
\def \QQ{{\mathbb Q}}

\def \ZZ{{\mathbb Z}}

\def \RR{{\mathbb R}}
\def \Dd{{\mathcal D}}

\def\CH{{\text{CH}}}

\def\Totr
\def\del{{\partial}}

\def\ul{\underline}

\def\alg{{\text{alg}}}

\def\Div{{\text{div}}}

\def\I{{\text{i}}}

\newtheorem{thm}{Theorem}[section]
\newtheorem{lem}[thm]{Lemma}

\newtheorem{conj}[thm]{Conjecture}

\newtheorem{prop}[thm]{Proposition}

\newtheorem{cor}[thm]{Corollary}

\newtheorem{defn}[thm]{Definition}
\newtheorem{ex}[thm]{Example}
\newtheorem{rem}[thm]{Remark}

\begin{document}
\date{}
\title{Indecomposable cycles and the case of the twisted Hodge D conjecture}
\author[Karim Mansour]{Karim Mansour}
\email{abdelgal@ualberta.ca}
\maketitle

\begin{abstract}
We will introduce twisted cycles and their associated regulators to cohomology. We prove the conjecture that this regulator is surjective for a general smooth projective surface. We construct indecomposable twisted cycles on elliptic fourfolds, and use that to show that this map is non-zero.
\end{abstract}

\section*{Acknowledgement}
First of all, I would like to thank Kyle Hofmann for checking my ideas for elliptic fourfolds. I would like to give a big thanks to Dr. Totaro for his suggestions.

\begin{section}{Introduction}

\begin{subsection}{Introduction to flat line bundles and twisted cycles}
	In this section, we will introduce twisted cycles. Twisted cycles are "twisted" analogues of higher Chow groups. Higher Chow cycles come from formal sums of divisors of rational functions on subvarieties. Analogously, twisted cycles will be cycles that come from formal sums of divisors of rational sections on a line bundle equipped with a metric over subvarieties. 

\begin{rem}
Twisted cycles quotient out regulator values that share algebraic information across different subvarieties. That is, there is a geometric connection between the subvarieties involved that is encoded precisely as the quotient of the image of the tame symbol. See section 2.
\end{rem}

\begin{rem}
Notice that the reason we add the quotient is in order to remove non-pure regulator values that are shared among subvarieties. That is, removing algebraic redundancies. 
\end{rem}

\begin{rem}
Some of the technical aspects of the introduction can be found in \cite{Lew3}, we just repeat it for the sake of completeness. 
\end{rem}

These line bundles are denoted as flat line bundles. The reason we work with flat line bundles is that they allow us to define regulator maps into cohomology. We define the notion of "flat line bundles."

\begin{defn}
	If $Z$ is smooth, a metric on $L$ is given by a collection of $C^{\infty}$, $\{\rho_{\alpha} : U_{\alpha} \to (0,\infty)\}$  satisfying the following condition:
	
	$$\rho_{\beta} = \rho_{\alpha}|g_{\alpha\beta}|^2 \textrm{ on } U_{\alpha} \cap U_{\beta}$$

where $\{U_{\alpha}\}$ is taken to be a Zariski open
cover of $Z$ where $L|_{U_{\alpha}} \simeq U_{\alpha}\times {\CC}$
trivializes, and where the transition functions of $L$ with respect
to this cover are $\{g_{\alpha\beta}\}$.
\end{defn}

\begin{defn}
	The curvature form
is the global closed real $(1,1)$-form $\nu_L$ given locally on $U_{\alpha}$
by
$$
\nu_L = \frac{1}{2\pi\sqrt{-1}}\partial\overline{\partial}\log \rho_{\alpha},
$$
and it's image $c_1(L) = [\nu_L]$ in $H^2(X,{\ZZ})$ is  the first Chern class of $L$.
\end{defn}

The next lemma gives a corresponding classification to flat line bundles. This classification is more algebraic, allowing us to detect flat line bundles easier.

\begin{lem}
	Let us assume given a line bundle $L$ as above on
a smooth projective $Z$, with first Chern class $c_1(L) = 0$. Then one can
find a corresponding metric for which the curvature form $\nu_L = 0$.
\end{lem}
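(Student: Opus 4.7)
The plan is to start with an arbitrary smooth Hermitian metric on $L$ and then perform a global conformal rescaling to kill the curvature. The heart of the argument is the $\partial\overline{\partial}$-lemma for compact K\"ahler manifolds, which applies here because $Z$ is smooth projective and hence K\"ahler.

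First I would fix a Zariski open cover $\{U_\alpha\}$ trivializing $L$ with transition functions $\{g_{\alpha\beta}\}$, and choose any smooth Hermitian metric on $L$, for instance via a partition of unity. This produces positive $C^\infty$ functions $\{\rho_{0,\alpha}\}$ satisfying the cocycle condition $\rho_{0,\beta} = \rho_{0,\alpha}|g_{\alpha\beta}|^2$, and hence a globally defined real closed $(1,1)$-form $\nu_{L,0} := \frac{1}{2\pi\sqrt{-1}}\partial\overline{\partial}\log\rho_{0,\alpha}$ whose de Rham cohomology class is $c_1(L)_{\RR}$. By hypothesis $c_1(L) = 0$ in $H^2(Z,\ZZ)$, so $\nu_{L,0}$ is $d$-exact.

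Next I would invoke the $\partial\overline{\partial}$-lemma: every real, $d$-exact $(1,1)$-form on a compact K\"ahler manifold can be written as $\frac{1}{2\pi\sqrt{-1}}\partial\overline{\partial} f$ for some real smooth function $f$. Applying this to $\nu_{L,0}$ yields an $f \in C^\infty(Z,\RR)$ with $\nu_{L,0} = \frac{1}{2\pi\sqrt{-1}}\partial\overline{\partial} f$.

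Finally I would define the rescaled metric by $\rho_\alpha := \rho_{0,\alpha}\, e^{-f}$. Since $e^{-f}$ is a globally defined positive smooth function, the cocycle condition $\rho_\beta = \rho_\alpha|g_{\alpha\beta}|^2$ is preserved, so the $\rho_\alpha$ define an honest metric on $L$. Its curvature form is
\[
\nu_L \;=\; \frac{1}{2\pi\sqrt{-1}}\partial\overline{\partial}\bigl(\log\rho_{0,\alpha} - f\bigr) \;=\; \nu_{L,0} - \frac{1}{2\pi\sqrt{-1}}\partial\overline{\partial} f \;=\; 0,
\]
as desired. The only substantive ingredient is the $\partial\overline{\partial}$-lemma (which uses the K\"ahler hypothesis in an essential way); everything else is bookkeeping. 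I expect no serious obstacle, since all three steps are standard in Hodge theory.
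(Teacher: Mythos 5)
Your proof is correct and takes essentially the same route as the paper: both start from a given metric, invoke the $\partial\overline{\partial}$-lemma (Hodge theory on the projective, hence K\"ahler, $Z$) to write the curvature form as $\frac{1}{2\pi\sqrt{-1}}\partial\overline{\partial}f$ for a global real smooth $f$, and then conformally rescale the local metric functions by $e^{-f}$ to kill the curvature. Your $\rho_\alpha := \rho_{0,\alpha}e^{-f}$ is exactly the paper's $\tilde{\rho}_\alpha := \rho_\alpha/\rho$ with $\rho = \exp(\psi)$.
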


\begin{proof}
See \cite{GH} (Proposition, p. 148).
From Hodge theory one has $\nu_L \in
\partial\overline{\partial}E^0_Z \implies \nu_L = \frac{1}
{2\pi\sqrt{-1}}\partial\overline{\partial}\psi$ for
some real-valued global $C^{\infty}$ function $\psi$
on $Z$. Set $\rho = \exp(\psi)$, then
$\partial\overline{\partial}\log \rho_{\alpha} = \partial
\overline{\partial}\log \rho$ over each $U_{\alpha}$. Now
replace each $\rho_{\alpha}$ by $\tilde{\rho}_{\alpha} :=
\rho_{\alpha}/\rho$. Then $\tilde{\rho}_{\alpha}$ transforms
accordingly over $U_{\alpha}\cap U_{\beta}$ and hence defines
a metric on $L$; moreover $\{\partial\overline{\partial}
\log \tilde{\rho}_{\alpha}\} = 0$, which is what we needed
to show.
	
\end{proof}

\begin{defn}
A line bundle $L$ on smooth projective variety $Z$ is flat if it's first chern class is zero, i.e. $c_1(L) = 0$. We denote the group of all flat line bundles up to isomorphism by $Pic^0(Z)$. 
\end{defn}

\begin{rem}
The first Chern class map gives a map from holomorphic line bundles to $H^2(Z,\mathbb{Z})$. Flat line bundles are precisely the kernel of that map. We record the following observation: The effective divisors $D_1,D_2 \subset Z$ are homologically equivalent over $Z$, written as $D_1 \sim_{hom,Z} D_2$ implies that there exists a flat line bundle $L$ and non-zero rational section $\sigma$ on $Z$ such that $\div(\sigma)_Z = D_1 - D_2$.
\end{rem}

Next, we will define a flat line bundle over varieties that are not necessarily smooth. Assume that $Z$ is a projective variety, that is not necessarily smooth, and $L$ is a line bundle over $Z$.

\begin{defn}
A metric $||\ ||_L$ on $L$ is given by a collection of $C^{\infty}$ functions, $\{\rho_{\alpha} : U_{\alpha} \to (0,\infty)\}$, $\rho_{\beta} = \rho_{\alpha}|g_{\alpha\beta}|^2 \textrm{ on } U_{\alpha} \cap U_{\beta}$, such that $\rho_{\alpha}$ pullback to $C^{\infty}$ functions with respect to the desingularization $\tilde{Z} \rightarrow Z$ of $Z$. A line bundle is flat if it is flat with respect to the desingularization $\tilde{Z}$. 
\end{defn}

When we combine these two pieces of information, we get the following definition for a flat line bundle over projective variety $Z$:

\begin{defn}
Given a projective variety $Z$. The metric $||\ ||_L$ is flat if $\nu_L = 0$. A flat line bundle $L$ is a pair $(L,||\ ||_L)$ where $||\ ||_L$ is flat. Denote the group of isomorphism classes of flat line bundles $L$ over $Z$ by $Pic^0(Z)$. 
\end{defn}

We are now ready to define the group of twisted cycles. 

\begin{defn}
Assume that $X$ is smooth projective variety. Define the group of twisted cycles $\underline{z}^k(X,1)$, as follows

\begin{align*}
  \underline{z}^k(X,1) := 
  \left\{
    \mathlarger{\sum}_i (\sigma_i, \|\ \|_{L_i}, V_i) : 
    \begin{aligned}
      \operatorname{codim}_X V_i = k -1,\\
      (L_i/V_i, \|\ \|_{L_i}) \text{ \rm{flat}}, \\
      \sigma_i \in \operatorname{Rat}^*(L_i), \\
      \mathlarger{\sum}_i \operatorname{div}(\sigma_i) = 0
    \end{aligned} \ \ \ 
  \right\}
\end{align*}

\end{defn}

\begin{prop}
$\xi = \sum_i(\sigma_i,||\ ||_{L_i})
\otimes Z_i \in \underline{z}^k(X,1)$ and $\w\in E_{X,d-{\rm
closed}}^{n-k+1,n-k+1}$. Then the current defined by
$$
\w \longmapsto \underline{r}(\xi)(\w) := \sum_i\int_{Z_i}\w\log
||\sigma_i||_{L_i},
$$

is $\partial\overline{\partial}$-closed.
	
\end{prop}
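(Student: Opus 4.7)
The plan is to interpret $\underline{r}(\xi)$ as a $(k-1,k-1)$-current on $X$ defined on every test $(n-k,n-k)$-form $\omega$ by the same integration formula (the integrand $\omega\log\|\sigma_i\|_{L_i}$ is $L^1_{\mathrm{loc}}$ on $Z_i$), and then to verify the distributional identity $\partial\overline{\partial}\,\underline{r}(\xi)=0$. By the definition of $\partial\overline{\partial}$ on currents, this amounts to showing $\underline{r}(\xi)(\partial\overline{\partial}\omega)=0$ for every such test form. The argument rests on two ingredients: the Poincar\'e--Lelong formula applied on each $Z_i$ (with flatness killing the curvature term) and the closed-cycle condition $\sum_i\operatorname{div}(\sigma_i)=0$ built into the definition of $\underline{z}^k(X,1)$.

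First I would pass to a desingularization $\pi_i:\tilde Z_i\to Z_i$ of each $Z_i$, which is legitimate because the flat structure on $(L_i,\|\ \|_{L_i})$ is defined through pullback to $\tilde Z_i$. On the smooth projective $\tilde Z_i$ the locally integrable function $\log\|\pi_i^*\sigma_i\|$ satisfies the Poincar\'e--Lelong identity
\[
\partial\overline{\partial}\log\|\pi_i^*\sigma_i\| \;=\; \pi\sqrt{-1}\,\bigl([\operatorname{div}(\pi_i^*\sigma_i)]+\nu_{\pi_i^*L_i}\bigr)
\]
as currents, and flatness annihilates the curvature term $\nu_{\pi_i^*L_i}=0$. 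I would then use the distributional definition of $\partial\overline{\partial}$ to transfer derivatives onto $\log\|\sigma_i\|_{L_i}$, push forward along $\pi_i$, and obtain
\[
\int_{Z_i}(\partial\overline{\partial}\omega)\log\|\sigma_i\|_{L_i} \;=\; \pi\sqrt{-1}\int_{\operatorname{div}(\sigma_i)}\omega.
\]
Summing over $i$ and invoking $\sum_i\operatorname{div}(\sigma_i)=0$ in the cycle group of $X$ completes the proof.

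The main obstacle I anticipate is making the distributional integration by parts rigorous, since $\log\|\sigma_i\|_{L_i}$ has logarithmic poles along $\operatorname{div}(\sigma_i)$ and $Z_i$ may be singular away from its generic locus. The standard remedy is a regularization argument: replace $\log\|\sigma_i\|_{L_i}$ by $\log\max(\|\sigma_i\|_{L_i},\epsilon)$, apply classical Stokes' theorem on the compact smooth $\tilde Z_i$, and pass to the limit $\epsilon\to 0^+$ via dominated convergence to recover the Poincar\'e--Lelong current. This analytic step is technical but routine, and it is where one must confirm that all boundary contributions around $\operatorname{div}(\sigma_i)$ vanish in the limit, leaving only the divisor term that is cancelled by the global cycle condition.
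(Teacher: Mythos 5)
Your proposal is correct and follows essentially the same route as the paper: the Poincar\'e--Lelong identity you invoke (with flatness killing the curvature term) is exactly the tube-and-residue computation the paper carries out by hand after writing $\omega=d\overline{\partial}\eta$ and applying Stokes' theorem, and both arguments terminate by the cycle condition $\sum_i\operatorname{div}(\sigma_i)=0$. Your regularization via $\log\max(\|\sigma_i\|_{L_i},\epsilon)$ and the paper's $\epsilon$-tubes with the $\epsilon\log\epsilon\to 0$ estimate are interchangeable ways of justifying the same integration by parts.
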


\begin{proof}
Let $\w \in \partial\overline{\partial}E_X^{n-k,n-k}$.
Then we can write $\w = d\overline{\partial}\eta$ for some $\eta \in
E_X^{n-k,n-k}$.  Also let 
$$
\xi = \sum_i (\sigma_i,||\ ||_{L_i})\otimes Z_i \in \underline{z}^k(X,1)
$$
 be given as above, and consider the corresponding integral
$$
\sum_i\int_{Z_i}\w\log ||\sigma_i||_{L_i}.
$$
 By Stokes' theorem and a standard calculation (below):
$$
\int_{Z_i}(d\overline{\partial}\eta)\log ||\sigma_i||_{L_i} = \int_{Z_i}
\overline{\partial}\eta\wedge d\log ||\sigma_i||_{L_i}
$$
$$
= \int_{Z_i}
\overline{\partial}\eta\wedge \partial\log ||\sigma_i||_{L_i} = \int_{Z_i}
d\eta\wedge \partial\log ||\sigma_i||_{L_i},
$$
 where the latter two equalities follow by Hodge type, and the former uses
$$
d\big((\overline{\partial}\eta) \log ||\sigma||_L\big) 
= (d\overline{\partial}\eta)
\log ||\sigma||_L - \overline{\partial}\eta\wedge d\log ||\sigma||_L.
$$
More specifically, we make use of the following facts.
By taking $\epsilon$-tubes about the components $D \subset {
\div}(\sigma)$, and using that $\dim D = n-k$ and $\overline{\partial}\eta
\in E_X^{n-k,n-k+1}$, and that $\log ||\sigma||$ is locally $L^1$, in the
limit as $\epsilon \to 0$
$$
\lim_{\epsilon\to 0}\int_{{\rm Tube}_{\epsilon}((\sigma))}
(\overline{\partial}\eta) \log ||\sigma||_L = 0,
$$
(using estimates involving
$\lim_{\epsilon\to 0^{+}}\epsilon\log\epsilon = 0)$.

Note that $\overline{\partial}\eta \wedge \overline{\partial} \log
||\sigma||_L\in E_{X,L^1}^{n-k,n-k+2}$ (locally $L^1$ forms) and that $\dim
Z_i = n-k+1$. Thus we are left with an integral of the form
$\int_Zd\eta\wedge \partial \log ||\sigma||_L$ as indicated above.
Next,
$$
d(\eta\wedge \partial \log ||\sigma||_L) = d\eta\wedge \partial \log
||\sigma||_L,
$$
since  $\overline{\partial}\partial \log ||\sigma||_L = 0$,
by the key ingredient of flatness of $L$. Thus
$$
\int_{Z_i}(d\overline{\partial}\eta)\log ||\sigma_i||_{L_i} =
\int_{Z_i}d(\eta\wedge \partial \log ||\sigma_i||_L) 
$$
$$
= \lim_{\epsilon\to 0}\int_{{\rm Tube}_{\epsilon}((\sigma_i))}
\eta\wedge \partial \log ||\sigma_i||_L.
$$
If we put $Z = Z_i$ for a given $i$, with $z$ a local coordinate
on $Z_{\reg}$, then we have the residue integral:
$$
\lim_{\epsilon\to 0}\int_{|z|=\epsilon}\eta\wedge 
\partial \log |z|^2 = \lim_{\epsilon\to 0}\int_{|z|=
\epsilon}\eta\wedge \frac{dz}{z}
$$
$$
 = 2\pi\sqrt{-1}
\int_{\{z=0\}\cap Z}\eta_{|_{\{z=0\}\cap Z}},
\ \eta_{|_{\{z=0\}\cap Z}} = {\rm Residue}_{\{z=0\}\cap Z}
\big(\eta\wedge \frac{dz}{z}\big),
$$
(i.e. taking ``tubes'' is dual to taking ``residues'').
Then by a residue calculation, linearity,
and Stokes' theorem, we arrive at the formula:
$$
\underline{r}(\xi)(\w) = \frac{-2\pi\sqrt{-1}}{2}
\sum_D\biggl{[}\biggl{(}\sum_i
\nu_D(\sigma_i)\biggr{)}\int_D\eta\biggr{]}.
$$
 (We note that there remains the possibility that $Z = Z_i$ is singular
along $D$.  To remedy this, one may pass to a normalization of $Z$ with the
same calculations above.)  Therefore $\sum_i {\div}(\sigma_i) = 0$,
hence $\underline{r}(\xi)(\w) = 0$ and we are done.
	
\end{proof}

\begin{rem}

{\it Independence of the metric.}\
  Now consider $Z \subset X$ an irreducible subvariety of codimension $k-1$
in $X$, and $L/{Z}$ a flat (algebraic) line bundle given by
$\{g_{\alpha\beta},U_{\alpha}\}$, and $\{\rho_{\alpha}\}$ a flat metric,
i.e., $\partial\overline{\partial}\log \rho_{\alpha} = 0$.  Let $\{
\tilde{\rho}_{\alpha}\}$ be another flat metric. Then
$$
\frac{\tilde{\rho}_{\beta}}{\rho_{\beta}} =
\frac{\tilde{\rho}_{\alpha}\, |g_{\alpha\beta}|^2}
{\rho_{\alpha}\, |g_{\alpha\beta}|^2},
$$
hence $\rho := (\tilde{\rho}_{\alpha} / \rho_{\alpha}) > 0$ 
extends globally over $Z$; moreover, $\partial\overline{\partial}\log \rho =
0$.  It follows that $\lambda := \frac{1}{2}\log \rho 
\in {\RR}$, being harmonic and
global, is necessarily constant.  Therefore
$$
\int_Z\w\log ||\sigma||_{\tilde L} = \int_Z \w\big(\lambda +
\log ||\sigma||_L\big).
$$
But $\lambda\int_Z(\cdots)$ defines a class in $H^{k-1,k-1}_{\rm alg}
(X,{\RR})$.  Thus if
$$
\xi = \sum_j(\sigma_j,||\ ||_{L_j})\otimes Z_j,
$$
then in
$$
\frac{H^{k-1,k-1}(X,{\RR})}{H^{k-1,k-1}_{\rm alg}(X,{\RR})},
$$
the regulator $\underline{r}(\xi)$ is independent of the choice
of flat metric $||\ ||_{L_j}$ on ${L_j}/Z_j$.

\end{rem}

\begin{rem}
What we would like to establish is that twisted cycles have better regulator properties than regular higher-Chow cycles. To establish this we show the following

\begin{itemize}
	\item We show that that any precycles on product of curves can be completed to a twisted cycle (see Remark 1.5).
	\item We establish better regulator properties for a general smooth projective surface.
	\item We establish better regulator properties for elliptic fourfolds..
\end{itemize}
\end{rem}

\begin{rem}
We pin down the intuition for better regulator properties due to the following facts:

\begin{itemize}
	\item For twisted cycles that are points, i.e., twisted cycles of codimension $n - 1$, points can be easily moved across subvarities, and they are homologically equivalent to each other on any subvariety.
	\item On smooth general fourfolds of curves, we have that complete intersections are homologically equivalent to curves that can move across subvarieties in an easy fashion. As complete intersections have good "linear algebraic properties,"  moving them across subvarities is easy. This doesn't occur for all subvarieties. 
\end{itemize}
\end{rem}

\end{subsection}

\section{Some Hodge theory}  
The goal of this section
is to describe the $\partial\overline{\partial}$-closed
regulator current $\underline{r}(\xi)$ given in Proposition 3.3,
from the point of view of de Rham cohomology. A good reference for
this section is \cite{GH} (Chapter 0) and \cite{Sou} (Chapter II).
Let ${\cal D}_{\ell}(X),\ {\cal D}_{r,s}(X)$ be the 
spaces of currents acting on $E_{X}^{\ell}$ and
$E_{X}^{r,s}$ rspectively, and write ${\cal D}^{2n-\ell}(X) = {\cal 
D}_{\ell}(X)$, ${\cal D}^{n-r,n-s}(X) = {\cal D}_{r,s}(X)$. 
One has a corresponding
decomposition 
$$
{\cal D}^{k}(X) = \bigoplus_{p+q=k}{\cal D}^{p,q}(X).
$$

\noindent

\begin{lem}
	
$\partial\overline{\partial}$- \it If  $T\in {\cal 
D}_{X}^{p,q}$ is a coboundary, then $T = 
\partial\overline{\partial}T_{0}$ for some $T_{0} \in {\cal 
D}^{p-1,q-1}(X)$.
\end{lem}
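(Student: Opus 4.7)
The plan is to prove this by adapting the classical $\partial\overline{\partial}$-lemma for forms to the setting of currents, using the fact that $X$ is compact Kähler (being smooth projective). The essential tool is Hodge theory in the category of currents, which is valid because the complex Laplacian $\Delta_{\bar\partial}$ is elliptic; one has the decompositions
$$
{\cal D}^{r,s}(X) = {\cal H}^{r,s}(X) \oplus \bar\partial\,{\cal D}^{r,s-1}(X) \oplus \bar\partial^{*}\,{\cal D}^{r,s+1}(X),
$$
and similarly with $\partial$ in place of $\bar\partial$, and moreover the Kähler identities give ${\cal H}^{r,s}_{\bar\partial} = {\cal H}^{r,s}_{\partial} = {\cal H}^{r,s}_d$, so a harmonic current is annihilated by each of $\partial,\bar\partial,\partial^{*},\bar\partial^{*}$. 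I would cite \cite{GH} for the corresponding facts on smooth forms and note that they pass to currents by standard elliptic regularity/duality.

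Next, write $T = dS$ for some $S \in {\cal D}^{p+q-1}(X)$, and split $S = S^{p-1,q} + S^{p,q-1}$ into its bidegree components. Expanding
$$
dS = \partial S^{p,q-1} + \bigl(\partial S^{p-1,q} + \bar\partial S^{p,q-1}\bigr) + \bar\partial S^{p-1,q},
$$
the pieces live in bidegrees $(p+1,q-1)$, $(p,q)$ and $(p-1,q+1)$ respectively. Since $T$ is pure of bidegree $(p,q)$, the outer two must vanish, so
$$
\partial S^{p,q-1} = 0, \qquad \bar\partial S^{p-1,q} = 0, \qquad T = \partial S^{p-1,q} + \bar\partial S^{p,q-1}.
$$

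Now apply $\bar\partial$-Hodge decomposition to the $\bar\partial$-closed current $S^{p-1,q}$: orthogonality to $\bar\partial^{*}$-exact pieces (because $\bar\partial S^{p-1,q}=0$) yields $S^{p-1,q} = h_1 + \bar\partial u_1$ with $h_1 \in {\cal H}^{p-1,q}$ and $u_1 \in {\cal D}^{p-1,q-1}(X)$. Symmetrically, $\partial$-Hodge decomposition of the $\partial$-closed current $S^{p,q-1}$ gives $S^{p,q-1} = h_2 + \partial u_2$ with $h_2 \in {\cal H}^{p,q-1}$ and $u_2 \in {\cal D}^{p-1,q-1}(X)$. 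By the Kähler identities, $\partial h_1 = 0 = \bar\partial h_2$, so
$$
T = \partial S^{p-1,q} + \bar\partial S^{p,q-1} = \partial\bar\partial u_1 + \bar\partial\partial u_2 = \partial\bar\partial\bigl(u_1 - u_2\bigr),
$$
and I set $T_0 := u_1 - u_2 \in {\cal D}^{p-1,q-1}(X)$.

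The main obstacle I anticipate is not the algebraic manipulation above, which is the classical proof, but justifying that Hodge decomposition applies verbatim to currents rather than to smooth forms. I would handle this by invoking elliptic regularity for $\Delta_{\bar\partial}$ on a compact manifold: the Green's operator extends to currents, harmonic projections land in ${\cal H}^{r,s}(X)$ (a finite-dimensional space of smooth forms, identified via duality with its counterpart on forms), and the decomposition is orthogonal with respect to the natural pairing of currents with forms. With this machinery in place, the proof above goes through with no further subtleties.
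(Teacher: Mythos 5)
The paper states this lemma with no proof at all (the section merely refers the reader to \cite{GH}, Chapter 0, and \cite{Sou}, Chapter II), so there is nothing in the text to compare your argument against; I will assess it on its own terms. Your overall strategy --- Hodge theory extended to currents plus the K\"ahler identities --- is the right one, and the final algebraic step $\partial\overline{\partial}u_1+\overline{\partial}\partial u_2=\partial\overline{\partial}(u_1-u_2)$ is fine.

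There is, however, a genuine gap in the second step. You write $S=S^{p-1,q}+S^{p,q-1}$, ``split into its bidegree components,'' and deduce from bidegree comparison that $\partial S^{p,q-1}=0$ and $\overline{\partial}S^{p-1,q}=0$. But a primitive $S$ of total degree $p+q-1$ in general has components in \emph{every} bidegree $(r,s)$ with $r+s=p+q-1$, not only those two. The $(p+1,q-1)$ component of $dS$ is $\partial S^{p,q-1}+\overline{\partial}S^{p+1,q-2}$, so its vanishing only gives $\partial S^{p,q-1}=-\overline{\partial}S^{p+1,q-2}$, which need not be zero (already for $p=q=2$ on a threefold, where $S$ also has $(3,0)$ and $(0,3)$ components). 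Since you use precisely these vanishings to kill the $\overline{\partial}^{*}$- (resp.\ $\partial^{*}$-) pieces in the Hodge decompositions of $S^{p-1,q}$ and $S^{p,q-1}$, the unwanted terms $\partial\overline{\partial}^{*}v_1$ and $\overline{\partial}\partial^{*}v_2$ survive and the argument as written does not close.

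The fix is standard and short. Since $T$ is exact its harmonic part vanishes, and since $T=dS$ is $d$-closed, the $d$-Hodge decomposition for currents gives $T=d(d^{*}GT)$; the canonical primitive $d^{*}GT=\partial^{*}GT+\overline{\partial}^{*}GT$ has exactly the two bidegree components $(p-1,q)$ and $(p,q-1)$, and for it the two vanishings do hold, after which your argument goes through. (Equivalently, run the Griffiths--Harris proof directly: $T=\overline{\partial}(\overline{\partial}^{*}GT)$, then apply a $\partial$-Hodge decomposition to $\overline{\partial}^{*}GT$ and the identity $\overline{\partial}\partial^{*}=-\partial^{*}\overline{\partial}$.) One further caution: for currents, ``orthogonality'' is not literally available, since a current cannot be paired with itself; the vanishing of the $\overline{\partial}^{*}$-component of a $\overline{\partial}$-closed current should be deduced from $\overline{\partial}^{*}\overline{\partial}GT=\overline{\partial}^{*}G\overline{\partial}T=0$, i.e.\ from the commutation of the Green's operator with $\overline{\partial}$ --- which is exactly the machinery you invoke in your closing paragraph, so this is a matter of phrasing rather than substance.
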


\bigskip\noindent
\begin{cor}
$$
H^{p,q}(X) \simeq \frac{E_{X,d-{\rm closed}}^{p,q}}
{\partial\overline{\partial}E_{X}^{p-1,q-1}} \simeq
\frac{{\cal D}_{d-{\rm closed}}^{p,q}(X)}
{\partial\overline{\partial}{\cal D}^{p-1,q-1}(X)}.
$$
\end{cor}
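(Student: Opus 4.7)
The plan is to deduce the corollary directly from the $\partial\overline{\partial}$-lemma stated in Lemma 2.1, combined with the standard Hodge decomposition on the compact Kähler manifold $X$. I would carry this out in three steps, doing the smooth-forms version and the currents version in parallel since the $\partial\overline{\partial}$-lemma (as stated in Lemma 2.1) holds in both settings.

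First, I would recall that on a compact Kähler manifold one has the Hodge identification
\[
H^{p,q}(X) \;\simeq\; \mathcal{H}^{p,q}(X) \;\simeq\; \frac{E_{X,d\text{-closed}}^{p,q}}{d E_X^{p+q-1}\cap E_X^{p,q}},
\]
where $\mathcal{H}^{p,q}(X)$ denotes the space of harmonic $(p,q)$-forms; this is just the standard Hodge theorem from \cite{GH}, Chapter 0. So the content of the corollary is the identification of the denominator $d E_X^{p+q-1}\cap E_X^{p,q}$ with $\partial\overline{\partial}E_X^{p-1,q-1}$.

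Second, I would verify the two inclusions. The inclusion
\[
\partial\overline{\partial}E_X^{p-1,q-1}\;\subseteq\; d E_X^{p+q-1}\cap E_X^{p,q}
\]
is immediate, since $\partial\overline{\partial}\eta = d(\overline{\partial}\eta)$ is exact and of type $(p,q)$ whenever $\eta\in E_X^{p-1,q-1}$. For the reverse inclusion, suppose $T\in E_X^{p,q}$ is $d$-exact and $d$-closed. Then $T$ is in particular $\partial$-closed and $\overline{\partial}$-closed and a coboundary of pure type $(p,q)$, so Lemma 2.1 applies and gives $T=\partial\overline{\partial}T_0$ for some $T_0\in E_X^{p-1,q-1}$. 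This yields the desired equality, and hence the first isomorphism in the statement.

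Third, for the currents version, the exact same argument goes through: by the de Rham theorem currents and smooth forms compute the same cohomology, so
\[
H^{p,q}(X)\;\simeq\; \frac{\mathcal{D}^{p,q}_{d\text{-closed}}(X)}{d\mathcal{D}^{p+q-1}(X)\cap \mathcal{D}^{p,q}(X)},
\]
and applying Lemma 2.1 in its currents form identifies the denominator with $\partial\overline{\partial}\mathcal{D}^{p-1,q-1}(X)$. The only subtle point, and the one I would treat most carefully, is to check that the natural map induced by inclusion of smooth forms into currents is the isomorphism between the two quotients; this is essentially automatic from the commutative diagram
\[
\begin{CD}
E^{p,q}_{X,d\text{-closed}}/\partial\overline{\partial}E^{p-1,q-1}_X @>>> \mathcal{D}^{p,q}_{d\text{-closed}}(X)/\partial\overline{\partial}\mathcal{D}^{p-1,q-1}(X) \\
@VVV @VVV \\
H^{p,q}(X) @= H^{p,q}(X)
\end{CD}
\]
in which both vertical maps have just been shown to be isomorphisms. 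The main (and really only) obstacle is the $\partial\overline{\partial}$-lemma itself, which is granted by Lemma 2.1; everything else is bookkeeping with the Hodge and de Rham theorems.
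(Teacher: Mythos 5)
Your argument is correct and is exactly the standard derivation the paper intends: the paper states this corollary without proof, treating it as an immediate consequence of Lemma 2.1 together with the Hodge and de Rham theorems, and your three steps (Hodge identification of $H^{p,q}(X)$ with closed $(p,q)$-forms modulo $d$-exact ones of that type, the $\partial\overline{\partial}$-lemma to replace the denominator by $\partial\overline{\partial}E_X^{p-1,q-1}$, and the same for currents plus the compatibility diagram) supply precisely the missing justification. The only point worth flagging is the one you already flag yourself, namely that Lemma 2.1 is stated for currents and you also need its smooth-forms version, which is the classical $\partial\overline{\partial}$-lemma from \cite{GH}.
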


\noindent

\begin{lem}
	
\ {\it The natural inclusion
$$
{\cal D}^{p,q}_{d-{\rm closed}}(X) \to 
{\cal D}^{p,q}_{\partial\overline{\partial}-{\rm closed}}(X),
$$
induces an isomorphism
$$
\frac{{\cal D}_{d-{\rm closed}}^{p,q}(X)}
{\partial\overline{\partial}{\cal D}^{p-1,q-1}(X)} \simeq
\frac{{\cal D}^{p,q}_{\partial\overline{\partial}-{\rm closed}}(X)}
{\partial {\cal D}^{p-1,q}(X) + \overline{\partial}{\cal D}^{p,q-1}(X)}.
$$}

\noindent
\end{lem}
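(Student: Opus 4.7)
The plan is to verify that the inclusion descends to a well-defined map on quotients and is then both surjective and injective, each step reducing to the $\partial\overline{\partial}$-lemma for currents stated just above. For well-definedness, I would first observe that a $d$-closed current of pure type $(p,q)$ is automatically $\partial\overline{\partial}$-closed, since $\partial T$ and $\overline{\partial}T$ live in disjoint bidegrees and so must vanish individually. Next, any $\partial\overline{\partial}T_0$ with $T_0 \in {\cal D}^{p-1,q-1}(X)$ equals $\partial(\overline{\partial}T_0) \in \partial{\cal D}^{p-1,q}(X)$, hence dies in the right-hand quotient.

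For surjectivity, starting from $T \in {\cal D}^{p,q}_{\partial\overline{\partial}-{\rm closed}}(X)$, my strategy is to modify $T$ by an element of $\partial{\cal D}^{p-1,q}(X) + \overline{\partial}{\cal D}^{p,q-1}(X)$ until it becomes $d$-closed. The key observation is that $\partial T$ is $\overline{\partial}$-closed (from $\overline{\partial}\partial T = -\partial\overline{\partial}T = 0$) and $\partial$-exact, hence $d$-closed; the $\partial\overline{\partial}$-lemma then furnishes $R \in {\cal D}^{p,q-1}(X)$ with $\partial T = \partial\overline{\partial}R$, so $T_1 := T - \overline{\partial}R$ is $\partial$-closed and still $\partial\overline{\partial}$-closed. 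Repeating the same trick on $\overline{\partial}T_1$ yields $S \in {\cal D}^{p-1,q}(X)$ with $T' := T_1 + \partial S$ also $\overline{\partial}$-closed, hence $d$-closed, while $T - T' \in \partial{\cal D}^{p-1,q}(X) + \overline{\partial}{\cal D}^{p,q-1}(X)$ by construction.

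For injectivity, I would suppose $\alpha \in {\cal D}^{p,q}_{d-{\rm closed}}(X)$ admits a representation $\alpha = \partial\beta + \overline{\partial}\gamma$. Applying $\partial$ and $\overline{\partial}$ separately to this relation and using $d\alpha = 0$ gives $\partial\overline{\partial}\beta = 0$ and $\partial\overline{\partial}\gamma = 0$, so $\partial\beta$ and $\overline{\partial}\gamma$ are each $d$-closed (and respectively $\partial$-exact and $\overline{\partial}$-exact). Two applications of the $\partial\overline{\partial}$-lemma then produce $\eta,\eta' \in {\cal D}^{p-1,q-1}(X)$ with $\partial\beta = \partial\overline{\partial}\eta$ and $\overline{\partial}\gamma = \partial\overline{\partial}\eta'$, so $\alpha = \partial\overline{\partial}(\eta+\eta')$.

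The main obstacle is that what one actually needs is the one-sided $\partial\overline{\partial}$-lemma ``$d$-closed and $\partial$-exact (or $\overline{\partial}$-exact) implies $\partial\overline{\partial}$-exact'', which is slightly stronger than the literal statement of the preceding lemma. However, the standard Hodge-theoretic proof on smooth projective $X$ yields this stronger form without modification, so the difficulty is mostly a matter of invoking the right variant; the rest of the argument reduces to bookkeeping with bidegrees and the sign identity $\partial\overline{\partial} = -\overline{\partial}\partial$.
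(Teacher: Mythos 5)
Your proof is correct, and it is a legitimate variant of the paper's argument rather than a transcription of it. The paper proves surjectivity in one stroke: it observes that $\partial T$ and $\overline{\partial}T$ are $d$-closed, writes $\partial T = dS_1$ and $\overline{\partial}T = dS_2$ with $S_1 \in F^{p+1}{\cal D}^{p+q}(X)$, $S_2 \in \overline{F^{q+1}{\cal D}^{p+q}(X)}$, adjusts the $S_j$ by harmonic representatives so that $[T-S_1-S_2]$ has pure type $(p,q)$, and then extracts the $(p,q)$-component of a $d$-primitive; injectivity is handled by noting that a $d$-closed $T \in \operatorname{Im}\partial + \operatorname{Im}\overline{\partial}$ has vanishing harmonic part, hence is a coboundary, and then quoting the preceding $\partial\overline{\partial}$-lemma. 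You instead factor everything through the one-sided lemma (``$d$-closed and $\partial$-exact, or $\overline{\partial}$-exact, implies $\partial\overline{\partial}$-exact'') applied twice for surjectivity and twice for injectivity. Your worry about that one-sided statement being stronger than the lemma as literally stated is easily discharged, and in fact by exactly the mechanism the paper itself uses for injectivity: if $T = \partial S$ is $d$-closed, then pairing against harmonic forms (which are $\partial$- and $\overline{\partial}$-closed on a projective manifold) shows $T$ has no harmonic part, so $T$ is a $d$-coboundary of pure type and the stated $\partial\overline{\partial}$-lemma applies. So the two proofs consume the same Hodge-theoretic input; yours is more modular and avoids the bookkeeping with Hodge filtrations and components of $T_0$, at the cost of invoking the lemma four times instead of organizing the correction globally. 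Either is acceptable; the bidegree checks in your surjectivity step ($R \in {\cal D}^{p,q-1}$, $S \in {\cal D}^{p-1,q}$) come out right, so $T - T'$ lands in $\partial{\cal D}^{p-1,q}(X) + \overline{\partial}{\cal D}^{p,q-1}(X)$ as required.
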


{\it Proof.}\ Let $T\in {\cal D}_{\partial\overline{\partial}-{\rm
closed}}^{p,q}(X)$. Then $\partial T\in {\cal D}^{p+1,q}(X)$ and
$\overline{\partial}T\in {\cal D}^{p,q+1}(X)$ are both $d$-closed.
Therefore, from Hodge theory, $\partial T = dS_{1}$ and
$\overline{\partial}T = dS_{2}$ for some $S_{1}\in F^{p+1}{\cal 
D}^{p+q}(X)$, and $S_{2}\in \overline{F^{q+1}{\cal D}^{p+q}(X)}$.
Thus $d(T-S_{1} - S_{2}) = 0$ and moreover by the Hodge $(p,q)$
decomposition theorem, we can modify $S_{j}$
within it's Hodge  type, such that
the cohomology class $[T-S_{1} - S_{2}]$ is of type $(p,q)$. 
(More explicitly, we can write
$$
[T-S_{1}-S_{2}] = [A_{1}] \oplus [B]\oplus [A_{2}],
$$
where 
$$
[A_{1}]\in F^{p+1}H^{p+q}(X,{\CC}),\quad [A_{2}]
\in \overline{F^{q+1}H^{p+q}(X,{\CC})},
\quad [B]\in H^{p,q}(X),
$$
are represented by $d$-closed currents (or forms) $A_{1}$, $A_{2}$,
$B$ of the corresponding Hodge types.
Now replace $S_{j}$ by $S_{j}-A_{j}$, and relabel
it $S_{j}$.) Hence
there exists $T_{0}$ such that $T - S_{1} - S_{2} + dT_{0}\in
{\cal D}^{p,q}_{d-{\rm closed}}(X)$. This implies that
$T + \partial T_{0}^{p-1,q} + \overline{\partial}T_{0}^{p,q-1}$ is
$d$-closed. Next, suppose that $T\in {\cal D}^{p,q}_{d-{\rm 
closed}}(X)$ is given such that $T \in {\rm Im}\partial + 
{\rm Im}\overline{\partial}$. By the Hodge theorem, $T$ has no harmonic
part, and being $d$-closed implies that it is a coboundary. The
lemma easily follows from this.
\qed

\bigskip
To arrive at the same sort of de Rham description
of $\underline{r}(\xi)$ that appears in the twisted case
in \cite{Ja} (p. 349), we for the moment include
the twist factor $1/(2\pi\sqrt{-1})^{n-k+1}$ appearing in (3.0).
It is obvious that $\underline{r}(\xi)$ given in Proposition 3.3 determines
an element of ${\cal D}_{n-k+1,n-k+1,\partial\overline{\partial}-
{\rm closed}}(X,{\RR}(n-k+1))$. It follows easily from the proof
of Lemma 4.3, that there exists $\psi\in {\cal D}_{2n-2k+2,0}(X)\oplus
\cdots \oplus {\cal D}_{n-k+2,n-k}(X)$ such that
$\underline{r}(\xi) + \pi_{n-k+1}(\psi)$ is $d$-closed. It's
action on 
$$
E_{X,d-{\rm closed}}^{n-k+1,n-k+1}/\partial
\overline{\partial}E_{X}^{n-k,n-k}
$$ 
is the same as $\underline{r}(\xi)$.
By duality, viz.,
$$
H^{k-1,k-1}(X,{\RR}(k-1)) \simeq H^{n-k+1,n-k+1}(X,{\bf 
R}(n-k+1))^{\vee},
$$
we end up with a class $\underline{r}(\xi) \in H^{k-1,k-1}(X,
{\RR}(k-1))$. Note that likewise
$$
[\underline{r}(\xi)] \in \frac{{\cal D}^{k-1,k-1}_{\partial
\overline{\partial}-{\rm closed}}(X)}
{\partial {\cal D}^{k-2,k-1}(X) + \overline{\partial}{\cal D}^{k-1,k-2}(X)}
\simeq H^{k-1,k-1}(X).
$$
Let
$$
Q^{k-1,k-1} = \frac{{\cal D}^{k-1,k-1}_{\partial
\overline{\partial}-{\rm closed}}
(X)}{{\cal D}^{k-1,k-1}_{X,d-{\rm closed}}}.
$$
There is a commutative diagram of short exact sequences:
$$
\begin{matrix}&&0&&0&&0\\
&\\
&&\downarrow&&\downarrow&&\downarrow\\
&\\
0&\to&{\rm Im}\partial\overline{\partial}&\to&{\rm Im}\partial 
+ {\rm Im}\overline{\partial}&\to&\frac{{\rm Im}\partial + {\rm
Im}\overline{\partial}}{{\rm Im}\partial\overline{\partial}}&\to&0\\
&\\
&&\downarrow&&\downarrow&&\downarrow\\
0&\to&{\cal D}^{k-1,k-1}_{X,d-{\rm closed}}&\to&
{\cal D}^{k-1,k-1}_{\partial\overline{\partial}-{\rm
closed}}&\to&Q^{k-1,k-1}&\to&0\\
&\\
&&\downarrow&&\downarrow&&\downarrow\\
&\\
0&\to&H^{k-1,k-1}(X)&=&H^{k-1,k-1}(X)&\to&0\\
&\\
&&\downarrow&&\downarrow\\
&\\
&&0&&0\end{matrix}
$$
Thus
$$
Q^{k-1,k-1} \simeq \frac{{\rm Im}\partial + 
{\rm Im}\overline{\partial}}{{\rm Im}
\partial\overline{\partial}}.
$$
Note that $\underline{r}(\xi)$ determines a class 
$$
\{\underline{r}(\xi)\}\in Q^{k-1,k-1},
$$
which is a measure of how far $\underline{r}(\xi)$ is from being
$d$-closed. 

\bigskip\noindent
\begin{prop}
{\it Let $X$  be a projective algebraic manifold of 
dimension $n$, and $D$ an algebraic cycle of dimension
$k-1$ on $X$. Next,
let $\xi \in \underline{z}^{k}(X,1)\otimes {\RR}$ be given
and consider the corresponding $\underline{r}(\xi)$. Let
us write $\underline{r}(\xi) = \sum_{i,\alpha}r_{i}\int_{Z_{i,\alpha}}
\log||\sigma_{i,\alpha}||\wedge (?)$, with $r_{i}\in {\RR}$, 
and assume that $D$ meets each $Z_{i,\alpha}$ 
properly (i.e. in a $0$-dimensional set), and
that $|D| \cap |(\sigma_{i,\alpha})_{Z_{i,\alpha}}| = \emptyset$.
Then if we put $[D]$ to be the Poincar\'e dual of $D$, the
cup product is given by the formula:
$$
\langle \underline{r}(\xi),[D]\rangle = \sum_{i,\alpha}r_{i}
\int_{Z_{i,\alpha}\cap D}\log||\sigma_{i,\alpha}||.
$$}
\end{prop}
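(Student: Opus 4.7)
The plan is to represent the Poincar\'e dual $[D]$ by a family of smooth, $d$-closed, pure type $(n-k+1,n-k+1)$-forms $\tau_{D,\epsilon}$ supported in an $\epsilon$-neighborhood of $|D|$, evaluate the regulator current $\underline{r}(\xi)$ on each, and pass to the weak limit $\tau_{D,\epsilon}\to\delta_D$ as $\epsilon\to 0$. First I would check that this procedure returns the correct cohomological cup product. Although $\underline{r}(\xi)$ is only $\partial\overline{\partial}$-closed, the discussion preceding the proposition furnishes a correction current $\psi$ of Hodge types strictly different from $(k-1,k-1)$ such that $\underline{r}(\xi)+\pi_{n-k+1}(\psi)$ is $d$-closed. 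Thus on any smooth form $\eta$ of pure type $(n-k+1,n-k+1)$ the term $\psi(\eta)$ vanishes by bidegree, and the $d$-closedness forces $\underline{r}(\xi)(\eta)$ to depend only on the de Rham class $[\eta]$; in particular this value equals $\langle\underline{r}(\xi),[D]\rangle$ for any $d$-closed pure-type representative of $[D]$.

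Next I would build the Thom-form representatives. Since $D$ is an algebraic cycle of codimension $n-k+1$ on the smooth projective $X$, Hodge theory together with the classical Thom-bundle construction yields closed, pure type $(n-k+1,n-k+1)$-forms $\tau_{D,\epsilon}$ supported in the $\epsilon$-tube $T_\epsilon(|D|)$ and cohomologous to $[D]$; as $\epsilon\to 0$ these approximate the integration current $\delta_D$ weakly. (If components of $D$ are singular, I would first pass to a resolution $\tilde D\to D$ and push forward, just as in the proof of Proposition 3.3.) For each small $\epsilon$,
$$
\langle\underline{r}(\xi),[D]\rangle \;=\; \underline{r}(\xi)(\tau_{D,\epsilon}) \;=\; \sum_{i,\alpha}r_i\int_{Z_{i,\alpha}}\tau_{D,\epsilon}\,\log\|\sigma_{i,\alpha}\|.
$$

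Then I would compute the $\epsilon\to 0$ limit inside each individual integral. Near every isolated $p\in D\cap Z_{i,\alpha}$ the proper-intersection hypothesis provides analytic coordinates on $X$ (after a local resolution of $Z_{i,\alpha}$ if needed) in which $Z_{i,\alpha}=\{x=0\}$ and the local branch of $D$ is $\{y=0\}$, so $\tau_{D,\epsilon}|_{Z_{i,\alpha}}$ becomes a Thom-bump top form in the $y$-coordinates with total mass equal to the local intersection multiplicity $\mu_p$ and concentrating at $p$. The hypothesis $|D|\cap|(\sigma_{i,\alpha})_{Z_{i,\alpha}}|=\emptyset$ makes $\log\|\sigma_{i,\alpha}\|$ continuous on a neighborhood of $p$ in $Z_{i,\alpha}$, so by dominated convergence
$$
\lim_{\epsilon\to 0}\int_{Z_{i,\alpha}}\tau_{D,\epsilon}\log\|\sigma_{i,\alpha}\| \;=\; \sum_{p\in D\cap Z_{i,\alpha}}\mu_p\log\|\sigma_{i,\alpha}(p)\| \;=\; \int_{Z_{i,\alpha}\cap D}\log\|\sigma_{i,\alpha}\|.
$$
Summing over $(i,\alpha)$ with weights $r_i$ produces the stated formula. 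The hard step, in my view, is the very first one: pinning down precisely that $\psi$ has no $(k-1,k-1)$-component, so that the pairing is genuinely independent of the chosen pure-type $d$-closed representative of $[D]$. Once this independence is in hand, the Thom-form construction and the dominated-convergence step at each intersection point are routine.
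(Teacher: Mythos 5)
Your argument is sound in outline but follows a genuinely different route from the paper's. The paper reduces by desingularization and linearity to the case where $j:D\hookrightarrow X$ is a smooth subvariety, invokes the projection formula $j_{\ast}\circ j^{\ast}[\gamma]=[\gamma\cap D]$, and then identifies the pullback $j^{\ast}\underline{r}(\xi)\in H^{k-1,k-1}(D)$ (a top class on $D$, so just a number via the trace) with $\sum_{\alpha}\int_{Z_{\alpha}\cap D}\log||\sigma_{\alpha}||$; the two hypotheses (proper intersection, and $|D|\cap|(\sigma)_{Z}|=\emptyset$) enter exactly to make this restriction of the current well defined. You instead regularize the other factor of the pairing, approximating $\delta_{D}$ by closed pure-type forms concentrated in a tube and passing to the limit by dominated convergence. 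Your approach has the virtue of making the local multiplicity count at each intersection point explicit, where the paper leaves the meaning of $\int_{Z\cap D}$ as an intersection $0$-cycle implicit. Also, the step you single out as hard is in fact the easy one: the paper constructs $\psi\in{\cal D}_{2n-2k+2,0}(X)\oplus\cdots\oplus{\cal D}_{n-k+2,n-k}(X)$, so it has no $(n-k+1,n-k+1)$-component by construction, and independence of the pure-type $d$-closed representative follows already from Proposition 1.11 (vanishing of $\underline{r}(\xi)$ on $\partial\overline{\partial}E_{X}^{n-k,n-k}$) together with the $\partial\overline{\partial}$-lemma of Section 2.

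The genuinely delicate point in your route is the one you pass over: the existence of $d$-closed forms $\tau_{D,\epsilon}$ that are simultaneously of pure type $(n-k+1,n-k+1)$, supported in the $\epsilon$-tube, and in the class $[D]$. Both properties are load-bearing for you — purity kills $\psi$, and tube support (keeping the mass of $\tau_{D,\epsilon}|_{Z_{i,\alpha}}$ away from $|(\sigma_{i,\alpha})|$, where $\log||\sigma_{i,\alpha}||$ is unbounded) is what licenses dominated convergence. For a smooth divisor this can be done by hand, e.g.\ $\tau=\delta_{D}-\frac{\sqrt{-1}}{2\pi}\partial\overline{\partial}(\chi_{\epsilon}\log|s|_{h}^{2})$ with $\chi_{\epsilon}$ a cutoff near $|D|$, and for higher codimension one needs the analogous Green's-current machinery; but for singular components your proposed fix of "resolving and pushing forward" does not literally produce a smooth form, since pushforward along a non-submersion yields only a current. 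This is repairable (resolve, build the form upstairs, and transfer via the cohomology class localized near the image, or simply adopt the paper's desingularization-first reduction), but as written it is an assertion rather than a proof, and it is precisely where the hypotheses of the proposition must be re-examined after resolution.
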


\noindent
{\it Proof.}\ By desingularization and
linearity, we reduce to the case where $j : D\hookrightarrow X$
is a smooth subvariety of $X$. Let $[\gamma]$ the Poincar\'e dual of 
any given cycle $\gamma$ on $X$. Then
$$
j_\ast\circ j^\ast[\gamma] = [\gamma\cap D].
$$
This follows from
$$
j_{\ast}\circ j^{\ast}[\gamma] = \langle j_\ast\circ j^\ast[\gamma],
[X]\rangle = j_\ast\langle j^\ast[\gamma],j^\ast[X]\rangle
$$
$$
= j_\ast\langle j^\ast[\gamma],[D]\rangle
= \langle [\gamma],j_\ast[D]\rangle_X = [\gamma\cap D].
$$
In this case $\underline{r}(\xi) \in H^{k-1,k-1}(X)$ 
has a well-defined pullback $j^\ast \underline{r}(\xi)
\in H^{k-1,k-1}(D)$, where $\dim_XD = k-1$, and where 
in this case, $j^\ast \underline{r}(\xi) =
\sum_\alpha \int_{Z_\alpha \cap D}
\log||\sigma_\alpha||$. Note that
$j_\ast$ is just the trace. 
The proposition follows from this.
\qed

\bigskip\noindent
\begin{rem}
(i) It is easy to show that
$\underline{r}(\xi)$ is $d$-closed $\Leftrightarrow$ it
is a ${\RR}$ combination of algebraic cycles. 
This is generalized in Theorem 6.10 below.

\bigskip
(ii) The formula in  Proposition $2.4$ can be interpreted in terms
of height pairings. Let us further assume that
$|D| \cap \big(\bigcup_{i,\alpha}Z_{i,\alpha,{\rm Sing}}\big) = 
\emptyset$. Then for a suitable choice of flat metrics, we
have:
$$
\langle \underline{r}(\xi),[D]\rangle = \sum_{i,\alpha}r_{i}
\langle (\sigma_{i,\alpha})_{\tilde{Z}_{i,\alpha}}, D\cap 
\tilde{Z}_{i,\alpha}\rangle_{\rm ht},
$$
where $\langle\ ,\ \rangle_{\rm ht}$ is the height pairing
on a desingularization $\tilde{Z}$. The version of the definition
of height pairing we employ is given in \cite{MS3} (Def. 1).
(One need only show that
for a suitable flat metric, ${\bf H}(\log||\sigma||) = 0$,
where ${\bf H}(-)$ is the harmonic projection. However if we
write $c = {\bf H}(\log||\sigma||)$, then we know $c\in {\RR}$ is
constant. Put $\lambda = {\rm e}^{-c} > 0$, and multiply the metric
$\rho$ by $\lambda\cdot \rho$.)
\end{rem}

\section{A Tame symbol}
Now let $Z \subset X$ be of codimension $k-2$ with given flat
bundles $(L_j,||\ ||_{L_j})$, $j = 1,2$ and $\sigma_f \in 
{\Rat}^{\ast}(L_1),\ \sigma_g\in {\Rat}^{\ast}(L_2)$.
As a first step in the direction of constructing a twisted
Milnor complex, we define a
generalization of the Tame symbol as follows.
$$
\underline{T}(\{(\sigma_f, ||\ ||_{L_1}),(\sigma_g,
||\ ||_{L_2})\}\otimes Z)
$$
$$
= \sum_{\cd_ZD=1}\biggl{(}(-1)^{\nu_D(\sigma_f)\nu_D(\sigma_g)}
\biggl{(}\frac{\sigma_f^{\nu_D(\sigma_g)}}{\sigma_g^{\nu_D(\sigma_f)}}
\biggr{)}_D,||\ ||_{L_1^{\otimes
\nu_D(\sigma_g)}\otimes L_2^{-\nu_D(\sigma_f)}}\biggr{)}\otimes D .
$$

\noindent
\begin{prop}

$\underline{T}(\{(\sigma_f, ||\ ||_{L_1}),(\sigma_g, ||\
||_{L_2})\}\otimes Z) \subset \underline{z}^k(X,1)$.
\end{prop}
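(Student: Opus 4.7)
The plan is to verify the three defining conditions of $\underline{z}^k(X,1)$ for each summand of the tame symbol, and then reduce the global divisor condition to classical Weil reciprocity.

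First I would dispose of the easy conditions: codimension and flatness. If $\cd_X Z = k-2$ and $\cd_Z D = 1$, then $\cd_X D = k-1$, as required. For flatness, additivity of the first Chern class gives
$$c_1\bigl(L_1^{\otimes \nu_D(\sigma_g)} \otimes L_2^{\otimes -\nu_D(\sigma_f)}\bigr) = \nu_D(\sigma_g) c_1(L_1) - \nu_D(\sigma_f) c_1(L_2) = 0,$$
and the natural tensor metric $\rho_{1,\alpha}^{\nu_D(\sigma_g)} \rho_{2,\alpha}^{-\nu_D(\sigma_f)}$ satisfies $\partial\overline{\partial} \log(\cdot) = 0$ since each factor is flat; this passes through the desingularization of $D$ without change.

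Next I would check that the first slot is genuinely a nonzero rational section on $D$. Trivializing $L_1, L_2$ on a Zariski chart $U_\alpha$ of $Z$, write $\sigma_f, \sigma_g$ as local rational functions $f_\alpha, g_\alpha$. The expression $f_\alpha^{\nu_D(\sigma_g)}/g_\alpha^{\nu_D(\sigma_f)}$ transforms over $U_\alpha \cap U_\beta$ exactly by the transition cocycle of $L_1^{\otimes \nu_D(\sigma_g)} \otimes L_2^{\otimes -\nu_D(\sigma_f)}$, so it glues to a rational section of that bundle on $Z$. Its order of vanishing along $D$ is $\nu_D(\sigma_g)\nu_D(\sigma_f) - \nu_D(\sigma_f)\nu_D(\sigma_g) = 0$, so restriction to $D$ produces a nonzero meromorphic section.

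The substantive step is the divisor identity
$$\sum_{D} \operatorname{div}_D\!\Bigl((-1)^{\nu_D(\sigma_f)\nu_D(\sigma_g)} \tfrac{\sigma_f^{\nu_D(\sigma_g)}}{\sigma_g^{\nu_D(\sigma_f)}}\Big|_D\Bigr) = 0$$
as a codimension-$k$ cycle on $X$. Since divisors of rational sections of line bundles are computed in local trivializations as divisors of rational functions, the coefficient along any irreducible codimension-$2$ subvariety $E \subset Z$ equals the coefficient produced by the classical Milnor-$K_2$ tame symbol applied to the local rational functions $f_\alpha, g_\alpha$ representing $\sigma_f, \sigma_g$ near $E$. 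Passing to the normalization $\tilde{Z} \to Z$ along $E$ so that $\mathcal{O}_{\tilde{Z},E}$ is a DVR, trivializing $L_1, L_2$ in a Zariski neighborhood of $E$, the identity becomes the classical Weil reciprocity statement on the two-dimensional local ring.

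The main obstacle is bookkeeping: making sure the contributions over different trivializing charts glue consistently, and that pushforward under the normalization map does not disturb the cycle-level identity on $X$. Once one verifies that the transition-function factors produced in different trivializations cancel in the divisor sum (which they do, because a divisor of a rational section is globally well defined on $Z$), the vanishing reduces, $E$ by $E$, to classical Weil reciprocity (as in \cite{GH}, Ch.~5), and the proposition follows.
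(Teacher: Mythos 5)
Your approach matches the paper's: you first verify flatness of the twisted line bundle by a local $\partial\overline{\partial}\log$ computation (identical to what the paper does), and then reduce the divisor condition to the classical tame-symbol vanishing by trivializing $L_1, L_2$ on a Zariski cover of $Z$. The extra check that the first slot glues to a well-defined nonzero rational section on $D$ (because the order of vanishing along $D$ cancels) is a useful addition the paper omits.

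One technical slip in your final paragraph: passing to the normalization of $Z$ does \emph{not} make $\mathcal{O}_{\tilde Z,E}$ a DVR when $E$ has codimension $2$, and ``Weil reciprocity'' in the classical sense concerns one-dimensional situations. What you actually want at a codimension-$2$ $E\subset Z$ is the vanishing $T^{(1)}\circ T^{(2)}=0$ in the Gersten--Milnor complex for ${\CC}(Z)$ --- i.e.\ that the divisor of the usual tame symbol $T\{f_\alpha,g_\alpha\}$ is zero --- which is exactly the fact the paper invokes once it has restricted to a trivializing chart $U$. The argument is otherwise sound, and the normalization step is only needed (as in the paper's Proposition on $\partial\overline{\partial}$-closedness) to handle singularities of $Z$ along codimension-$1$ components $D$, not to prove the divisor identity itself.
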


\bigskip
\noindent
{\it Proof.}\  One first shows that $\underline{T}$ takes flat line bundles
over $Z$ to flat line bundles over each such $D \subset Z$ of codimension
one. If $\{f_{\alpha\beta}\}$, resp.\space $\{g_{\alpha\beta}\}$ are
the transition functions for $L_1$, resp. $L_2$,  over $Z$ with common
open trivializing cover $\{U_{\alpha}\}_{\alpha\in I}$, and with
corresponding $\{\rho_{\alpha}^{(j)} : U_{\alpha} \to (0,\infty)\}$, $j =
1, 2$ i.e. $\partial\overline{\partial} \rho_{\alpha}^{(j)} = 0$, we
consider the following calculation for codimension one irreducible $D
\subset Z$.  First, for $\sigma_f = \{f_{\alpha}\}$ and $\sigma_g =
\{g_{\alpha}\}$ local representations of nonzero meromorphic
sections of $L_1$ and $L_2$ we have  $f_{\alpha} = f_{\alpha\beta}
f_{\beta}$ and $g_{\alpha} = g_{\alpha\beta}g_{\beta}$ over $U_{\alpha}\cap
U_{\beta}$.  Hence over $D \cap U_{\alpha}\cap U_{\beta}$,
$$
\frac{f_{\alpha}^{\nu_D(g)}}{g_{\alpha}^{\nu_D(f)}} = h_{\alpha\beta}
\frac{f_{\beta}^{\nu_D(g)}}{g_{\beta}^{\nu_D(f)}},
$$
where $h_{\alpha\beta} = f_{\alpha\beta}^{\nu_D(\sigma_g)}
g_{\alpha\beta}^{-\nu_D(\sigma_f)}$ on $D \cap U_{\alpha}\cap U_{\beta}$.
Further, we have the metric $\{(\rho_{\alpha}^{(1)})^{\nu_D(\sigma_{g})}
(\rho_{\alpha}^{(2)})^{{-\nu_D(\sigma_{f})}}\}$ 
associated to the line bundle
$\{h_{\alpha\beta}\}$ over $D$, and with respect to the open cover
$\{U_{\alpha}\cap D\}_{\alpha\in I}$. But
$$
\partial\overline{\partial}\log 
((\rho_{\alpha}^{(1)})^{\nu_D(\sigma_{g})}
(\rho_{\alpha}^{(2)})^{{-\nu_D(\sigma_{f})}})\hskip2in
$$
$$
= \nu_D(\sigma_g)\partial\overline{\partial}\log (\rho_{\alpha}^{(1)})
- \nu_D(\sigma_f)\partial\overline{\partial}\log (\rho_{\alpha}^{(2)}) = 0,
$$
hence this metric is flat as well.
Next, we show that the divisor associated to $\underline{T}(\{(\sigma_f,
||\ ||_{L_1}),(\sigma_g, ||\ ||_{L_2})\}\otimes Z)$ is zero.  Choose a
Zariski open set $U \subset Z$ for which $L_1,\ L_2$ both trivialize
over $U$.  Then over $U$ we have
(the restriction of) the divisor associated to the usual tame symbol,
$T(\{f,g\}\otimes Z)$, which is zero, as required.  Here we have identified
 $f =\sigma_f$ and $g = \sigma_g$ for $f, g\in {\CC}(Z)^{\times}$. 
Since $Z$ is covered by such $U$, we are done.
\qed

\bigskip\noindent

\begin{prop}
$\underline{r}({\rm Im}(\underline{T})) = 0$.
\end{prop}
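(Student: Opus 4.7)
The plan is to expand $\underline{r}(\underline{T}(\{(\sigma_f,\|\cdot\|_{L_1}),(\sigma_g,\|\cdot\|_{L_2})\}\otimes Z))$ applied to a $d$-closed test form $\omega \in E_{X,d\text{-closed}}^{n-k+1,n-k+1}$, and show it is zero in the target cohomology (equivalently, modulo $H^{k-1,k-1}_{\mathrm{alg}}(X,\RR)$ where $\underline{r}$ is well-defined per Remark~1.6). Since the sign $(-1)^{\nu_D(\sigma_f)\nu_D(\sigma_g)}$ disappears upon taking $\log\|\cdot\|$, and since the metric on $M_D := L_1^{\otimes \nu_D(\sigma_g)}\otimes L_2^{-\nu_D(\sigma_f)}$ is the tensor power of the individual metrics (by construction in Proposition 2.1), we get
$$
\underline{r}(\underline{T}(\{\sigma_f,\sigma_g\}\otimes Z))(\omega) = \sum_D\Bigl(\nu_D(\sigma_g)\int_D\omega\,\log\|\sigma_f\|_{L_1} - \nu_D(\sigma_f)\int_D\omega\,\log\|\sigma_g\|_{L_2}\Bigr).
$$

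Next, I would recognize each summand as a boundary-integral limit on $Z$ via the tube/residue duality employed in the proof of Proposition 1.5. Setting $\mathrm{Tube}_\epsilon := \mathrm{Tube}_\epsilon\bigl(|\mathrm{div}(\sigma_f)|\cup |\mathrm{div}(\sigma_g)|\bigr)\subset Z$, Stokes' theorem applied to the current $\omega\cdot\log\|\sigma_f\|_{L_1}\cdot\log\|\sigma_g\|_{L_2}$ on $Z\setminus \mathrm{Tube}_\epsilon$, together with the flatness identities $\partial\overline{\partial}\log\|\sigma_j\|_{L_j} = 0$ off $|\mathrm{div}(\sigma_j)|$ and the $d$-closedness of $\omega$, produces the identity of residue sums: the tube-limit about $|\mathrm{div}(\sigma_g)|$ (picking out $\nu_D(\sigma_g)$ times $\int_D\omega\log\|\sigma_f\|_{L_1}$) equals the tube-limit about $|\mathrm{div}(\sigma_f)|$ (picking out $\nu_D(\sigma_f)$ times $\int_D\omega\log\|\sigma_g\|_{L_2}$). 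Taking the difference, the two sums cancel.

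The main technical obstacle is handling the intersection locus $|\mathrm{div}(\sigma_f)|\cap |\mathrm{div}(\sigma_g)|$, where both logarithms become singular simultaneously. One needs estimates of the type $\lim_{\epsilon\to 0}\epsilon^a|\log\epsilon|^b=0$ combined with the codimension bound $\dim\bigl(|\mathrm{div}(\sigma_f)|\cap|\mathrm{div}(\sigma_g)|\bigr)\le n-k$ to show that the residual tube integrals along this locus give at worst a current supported on an algebraic cycle of $X$, hence contribute a class in $H^{k-1,k-1}_{\mathrm{alg}}(X,\RR)$ that vanishes in the target quotient. A secondary step, already anticipated at the end of Proposition 2.1, is to replace $Z$ and $D$ by normalizations whenever singularities of $Z$ lie on $D$, so that Stokes applies; and an appeal to Remark~1.6 lets us freely replace the flat metrics by locally convenient ones without altering the regulator class in its target.
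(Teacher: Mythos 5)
Your central step fails: applying Stokes' theorem to $\omega\log\|\sigma_f\|\log\|\sigma_g\|$ on $Z\setminus\mathrm{Tube}_\epsilon$ does not produce the residue sums you need. The boundary contribution along $\mathrm{Tube}_\epsilon(|\div(\sigma_f)|\cup|\div(\sigma_g)|)$ tends to zero, since the tube has measure $O(\epsilon)$ and the integrand grows only like $\log^2\epsilon$; what survives is merely the antisymmetry identity $\int_Z\omega\log\|\sigma_f\|\,d\log\|\sigma_g\| = -\int_Z\omega\log\|\sigma_g\|\,d\log\|\sigma_f\|$ (the analogue of (5.1)), with no delta-terms along the $D$'s. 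The residues that actually compute $\underline{r}(\underline{T}(\cdot))$ come from the holomorphic part $\partial\log\|\sigma\| = \tfrac{1}{2}\nu_D(\sigma)\tfrac{dz}{z}+\cdots$, exactly as in the proof of Proposition 1.5: note $\int_{|z|=\epsilon}d\log|z| = 0$ whereas $\int_{|z|=\epsilon}\tfrac{dz}{z} = 2\pi\sqrt{-1}$. Once you set the computation up with $\partial\log$, Stokes leaves an interior term $\int_Z\omega\wedge\bigl(\partial\log\|\sigma_f\|\wedge\bar\partial\log\|\sigma_g\| + \bar\partial\log\|\sigma_f\|\wedge\partial\log\|\sigma_g\|\bigr)$ of mixed Hodge type which neither vanishes nor reduces to an algebraic class for free; identifying it as a coboundary is the content of the statement, not an input to it. Your opening display is also ill-formed on any $D$ common to both divisors, where $\log\|\sigma_f\|\big|_D \equiv -\infty$: only the combined section $(\sigma_f^{\nu_D(\sigma_g)}/\sigma_g^{\nu_D(\sigma_f)})_D$, whose valuation along $D$ is zero, restricts.

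More fundamentally, you never engage with the feature that makes the twisted case hard: $\sigma_f$ and $\sigma_g$ are not rational functions. The paper's proof has a different architecture altogether. Lemma 3.3 establishes the vanishing for genuine rational functions $f,g$ by pulling back the explicit global vanishing of $\underline{r}(T\{t,s\})$ on $\PP^1\times\PP^1$ under $F=(f,g)$. Lemma 3.4 shows that flatness forces $\|\sigma\|$ to be locally of the form $h\overline{h}$ with $h$ meromorphic, the local pieces glued by unimodular constants. The proof of the proposition then observes that when those constants are roots of unity one can pass to a finite cover $\tilde{Z}\to Z$ on which the $h$'s become rational, reducing to Lemma 3.3, and it handles the general case by approximating the class of $\{h_{\alpha}\}$ in $H^1(Z,\RR/\ZZ)\simeq\Pic^0(Z)$ by torsion classes in $H^1(Z,\QQ/\ZZ)$ via the Poincar\'e bundle, concluding by a limit/continuity argument. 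None of these ingredients --- the reduction to the function-field case, the finite covers, the density argument in $\Pic^0(Z)$ --- appears in your proposal, and without them (or a corrected substitute for the Stokes computation) the argument does not close.
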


\bigskip
We prove this by first establishing two Lemmas. 

\bigskip
\noindent
\begin{lem}
{\it Let $Z$ be a smooth subvariety of codimension
$k-2$ in $X$, and let $f,g \in {\CC}(Z)^{\times}$ be given.
Then $\underline{r}\big(T\{f,g\}\big) = 0$.}
\end{lem}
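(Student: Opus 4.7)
Since $f, g \in \mathbb{C}(Z)^\times$, both line bundles $L_1, L_2$ are trivial and may be equipped with the constant-length flat metric, under which $\|\sigma_f\|_{L_1} = |f|$ and $\|\sigma_g\|_{L_2} = |g|$. Thus $\underline{T}\{(\sigma_f,\|\ \|_{L_1}), (\sigma_g,\|\ \|_{L_2})\}\otimes Z$ agrees with the image of the classical tame symbol $T\{f,g\}\otimes Z$, and the twisted regulator $\underline{r}$ restricted to such a cycle is just the classical real regulator on $z^k(X,1)$. In this sense the lemma is the standard vanishing on tame-symbol images going back to Bloch; the plan is to give a self-contained verification in the current notation, which will then feed into the twisted Proposition 3.5.

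For a test form $\omega \in E^{n-k+1,n-k+1}_{X,d\text{-closed}}$, the sign factor $(-1)^{\nu_D(f)\nu_D(g)}$ contributes $\log|\pm 1| = 0$, so
$$
\underline{r}(T\{f,g\})(\omega) \;=\; \sum_{\cd_Z D = 1}\int_D \omega\cdot\bigl(\nu_D(g)\log|f| - \nu_D(f)\log|g|\bigr).
$$
The first step is to reassemble the sum into an integral on $Z$ via Poincar\'e--Lelong, $\tfrac{\sqrt{-1}}{\pi}\partial\overline{\partial}\log|h| = [\div(h)]$, obtaining
$$
\underline{r}(T\{f,g\})(\omega) \;=\; \tfrac{\sqrt{-1}}{\pi}\int_Z \omega\wedge\bigl(\log|f|\,\partial\overline{\partial}\log|g| - \log|g|\,\partial\overline{\partial}\log|f|\bigr),
$$
understood through $\epsilon$-tube regularization about $|\div(f)|\cup|\div(g)|$ exactly as in the proof of Proposition 3.3.

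The critical identity is the direct currential equality
$$
2\bigl(\log|f|\,\partial\overline{\partial}\log|g| - \log|g|\,\partial\overline{\partial}\log|f|\bigr) \;=\; \partial A \,-\, \overline{\partial} B,
$$
with $A = \log|f|\,\overline{\partial}\log|g| - \log|g|\,\overline{\partial}\log|f|$ and $B = \log|f|\,\partial\log|g| - \log|g|\,\partial\log|f|$; the mixed Leibniz terms $\partial\log|f|\wedge\overline{\partial}\log|g|$ and their mates cancel after the antisymmetric $f \leftrightarrow g$ swap. Since $\omega$ is $d$-closed of pure type $(n-k+1, n-k+1)$ and $\omega\wedge A$, $\omega\wedge B$ already sit in top anti-holomorphic, resp.\ top holomorphic, degree on $Z$, one has $\omega\wedge\partial A = \pm d(\omega\wedge A)$ and $\omega\wedge\overline{\partial} B = \pm d(\omega\wedge B)$. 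Stokes' theorem on $Z$ minus $\epsilon$-tubes then turns both integrals into boundary contributions that vanish as $\epsilon \to 0$ by the familiar $\epsilon\log\epsilon\to 0$ estimate and local $L^1$-integrability of $\log$.

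The principal technical obstacle lies in carrying out the limit argument near the joint singular locus $|\div(f)|\cap|\div(g)|$, where both currents $\log|f|\,\partial\overline{\partial}\log|g|$ and $\log|g|\,\partial\overline{\partial}\log|f|$ are simultaneously singular. This is handled by passing to a birational modification $\tilde Z \to Z$ for which $\div(f)\cup\div(g)$ is a simple normal crossings divisor, after which the $\epsilon$-tube estimates decouple component by component and the residue contributions at an intersection $D\cap D'$ take the antisymmetric form $\nu_D(f)\nu_{D'}(g) - \nu_D(g)\nu_{D'}(f)$, which cancels pairwise. Once these boundary contributions vanish, we conclude $\underline{r}(T\{f,g\})(\omega) = 0$ for every $d$-closed test $\omega$, hence $\underline{r}(T\{f,g\}) = 0$.
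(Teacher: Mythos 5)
Your route is genuinely different from the paper's: the paper proves this lemma by functoriality, pushing the computation to the universal case via $F=(f,g):Z\to\mathbb{P}^1\times\mathbb{P}^1$, computing $T\{t,s\}$ there explicitly, and observing that the resulting current $\eta$ represents the zero class, so $F^{\ast}\eta=\underline{r}(T\{f,g\})$ does too. Your direct Stokes/current computation on $Z$ is a viable alternative in principle (it is the standard residue calculation behind such regulator formulas), and your algebraic identity $2(\log|f|\,\partial\overline{\partial}\log|g|-\log|g|\,\partial\overline{\partial}\log|f|)=\partial A-\overline{\partial}B$ checks out formally. But the final step as written contains a genuine error, not a cosmetic one. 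Away from $|\div(f)|\cup|\div(g)|$ the functions $\log|f|,\log|g|$ are pluriharmonic, so there $\partial A$ and $\overline{\partial}B$ both reduce to $\partial\log|f|\wedge\overline{\partial}\log|g|-\partial\log|g|\wedge\overline{\partial}\log|f|$, i.e.\ $\partial A-\overline{\partial}B\equiv 0$ as a smooth form off the divisors. Hence if, as you claim, \emph{all} the $\epsilon$-tube boundary contributions vanished by the $\epsilon\log\epsilon$ estimate, your Stokes argument would establish only $0=0$ and prove nothing. In fact the dominant boundary terms do not vanish: on the tube about a component $D\subset\div(g)$, the piece $\log|f|\,\overline{\partial}\log|g|$ of $A$ behaves like $\log|f|\cdot\tfrac{\nu_D(g)}{2}\,d\bar z/\bar z$, and $\int_{|z|=\epsilon}h\,d\bar z/\bar z\to-2\pi\sqrt{-1}\,h|_{z=0}$ --- the $1/\epsilon$ pole of the form exactly cancels the length $2\pi\epsilon$ of the circle, producing the full anti-residue. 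The $\epsilon\log\epsilon$ estimate only kills the cross terms (e.g.\ $\log|g|\,\overline{\partial}\log|f|$ near $\div(g)$).

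The repair is to keep those residues and exploit their antisymmetry: one finds $\lim_{\epsilon\to 0}\int_{\partial T_\epsilon}\omega\wedge A=-\pi\sqrt{-1}\,\underline{r}(T\{f,g\})(\omega)$ (anti-holomorphic residues of $\overline{\partial}\log$) while $\lim_{\epsilon\to 0}\int_{\partial T_\epsilon}\omega\wedge B=+\pi\sqrt{-1}\,\underline{r}(T\{f,g\})(\omega)$ (holomorphic residues of $\partial\log$), whereas Stokes on $Z\setminus T_\epsilon$ gives $\int_{\partial T_\epsilon}\omega\wedge(A-B)=\int_{Z\setminus T_\epsilon}\omega\wedge(\partial A-\overline{\partial}B)=0$; equating the two forces $\underline{r}(T\{f,g\})(\omega)=0$. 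Equivalently, one proves the current identity expressing the tame-symbol regulator as $\partial[A]-\overline{\partial}[B]$ with $[A],[B]$ the $L^1_{\mathrm{loc}}$ currents, and then pairs with a $d$-closed form of pure type, using $\partial[A](\omega)=\pm[A](\partial\omega)=0$. Two further points: your Poincar\'e--Lelong reassembly is not defined along common components of $\div(f)$ and $\div(g)$, where the tame symbol involves the unit parts $f_0,g_0$ rather than $f,g$ themselves --- the SNC modification handles transverse crossings but not shared components, so you should first reduce to coprime divisors (e.g.\ via the Steinberg relations). Your opening observation that for trivial flat bundles with constant metric $\underline{T}$ and $\underline{r}$ reduce to the classical $T$ and $r$ is correct and is exactly how the paper deploys this lemma inside Proposition 3.2.
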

\bigskip
\noindent
{\it Proof.}\ (See \cite{Lev}). It is instructive
to sketch the proof. By a pushforward of the relevant currents involved,
and by a proper modification, it suffices to assume that $Z$
is smooth and that $f, g : Z\to {\bf P}^{1}$ are morphisms.
Put $F = (f,g) : Z\to {\bf P}^{1}\times {\bf P}^{1}$, and let
$(t,s) = (z_{1}/z_{0},w_{1}/w_{0})$ be affine coordinates
for ${\bf P}^{1}\times {\bf P}^{1}$. Then $F^{\ast}T\{t,s\}
= T\{f,g\}$. One can explicitly compute:
$$
T\{t,s\} = (\infty\times {\bf P}^{1},s) - (0\times {\bf P}^{1},s)
+ ({\bf P}^{1}\times 0,t) - ({\bf P}^{1}\times \infty,t).
$$
Next, put
$$
\eta = \int_{\infty\times {\bf P}^{1}}\log|s|\wedge ? - 
\int_{0\times {\bf P}^{1}}\log|s| \wedge ? + 
\int_{{\bf P}^{1}\times 0}\log|t| \wedge ?
- \int_{{\bf P}^{1}\times \infty}\log|t|\wedge ?.
$$
It is easy to see that $F^{\ast}\eta = \underline{r}(T\{f,g\})$,
and that $\eta$ defines the zero cohomology class.
This proves the lemma. \qed

\bigskip
Now let $\sigma$ be a  section of a flat line
bundle over a given subvariety $Z\subset X$. Then  $||\sigma||$
has at worst pole like growth
along the pole set of $\sigma$; moreover
$\partial\overline{\partial} \log||\sigma|| = 0$.

\bigskip\noindent
\begin{lem}
{\it In terms of local analytic coordinates, 
$||\sigma||$ is locally a product of the
form $\rho = h\overline{h}$, where $h$ is meromorphic.}

\end{lem}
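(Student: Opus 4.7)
The plan is to reduce everything to the local Poincaré-lemma-type statement that a real pluriharmonic function on a polydisc is the real part of a holomorphic one. Concretely, let $Z$ be the subvariety and $(L, \|\ \|_L)$ the flat line bundle, given by transition functions $\{g_{\alpha\beta}\}$ over a trivializing cover $\{U_\alpha\}$ and positive functions $\rho_\alpha : U_\alpha \to (0,\infty)$ satisfying $\rho_\beta = \rho_\alpha |g_{\alpha\beta}|^2$. Let $\sigma$ be represented by the collection of meromorphic functions $\{\sigma_\alpha\}$ on $U_\alpha$, so that locally $\|\sigma\|^{2} = |\sigma_\alpha|^2\, \rho_\alpha$ (or, in the paper's convention where $\|\sigma\|$ already denotes the squared pointwise norm, simply $\|\sigma\| = |\sigma_\alpha|^2 \rho_\alpha$).

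Next I would invoke flatness of the metric: on $U_\alpha$ we have $\partial\overline{\partial}\log\rho_\alpha = 0$, so $\log\rho_\alpha$ is pluriharmonic. Shrinking $U_\alpha$ to a polydisc $\Delta \subset Z_{\reg}$ (or, when $Z$ is singular, passing to a polydisc on the desingularization $\tilde{Z}$ as in the flat definition), the standard fact that a real pluriharmonic function on a simply connected domain in $\mathbb{C}^{\dim Z}$ is the real part of a holomorphic function gives a holomorphic $\phi_\alpha$ on $\Delta$ with $\log\rho_\alpha = \phi_\alpha + \overline{\phi_\alpha}$. Setting $u_\alpha = \exp(\phi_\alpha)$, I get a nowhere-vanishing holomorphic function with $\rho_\alpha = u_\alpha\, \overline{u_\alpha}$.

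Combining the two ingredients yields
\[
\|\sigma\| \;=\; |\sigma_\alpha|^2 \rho_\alpha \;=\; \bigl(\sigma_\alpha u_\alpha\bigr)\overline{\bigl(\sigma_\alpha u_\alpha\bigr)} \;=\; h\,\overline{h},
\]
with $h := \sigma_\alpha u_\alpha$ meromorphic on $\Delta$, being the product of a meromorphic function and a nowhere-vanishing holomorphic one. This is the desired local product decomposition.

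The only real subtlety is the choice of local chart when $Z$ is singular: strictly speaking, one first lifts to the desingularization $\tilde{Z} \to Z$ (which is the setting in which flatness was defined in the metric axioms) and carries out the pluriharmonic-splitting step there, and then notes that the resulting identity $\|\sigma\| = h\bar{h}$ is preserved upon pushdown since both sides transform the same way under the resolution. Beyond this bookkeeping, there is nothing delicate: the one nontrivial input is the pluriharmonic-equals-real-part-of-holomorphic lemma on simply connected domains, which is the main obstacle only insofar as one must verify simple connectedness (handled by shrinking to a polydisc).
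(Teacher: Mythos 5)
Your proof is correct, and it rests on the same key fact as the paper's (a real pluriharmonic function on a polydisc splits as $H+\overline{H}$ with $H$ holomorphic, via the holomorphic Poincar\'e lemma), but the way you deploy it is genuinely different and in fact cleaner. The paper applies the splitting directly to $\log\|\sigma\|$, which is singular along $|(\sigma)|$; it therefore only obtains $\|\sigma\|=h\overline{h}$ away from the divisor set and must add a second step, multiplying by $|f|^2$ for a suitable meromorphic $f$ to kill the zeros and poles before repeating the argument near $|(\sigma)|$. You instead factor $\|\sigma\|$ through the local trivialization as $|\sigma_\alpha|^2\rho_\alpha$: the meromorphic piece $\sigma_\alpha\overline{\sigma_\alpha}$ is already manifestly of the desired form, and the pluriharmonic splitting is applied only to $\log\rho_\alpha$, which is smooth and globally defined on the whole chart (including over $|(\sigma)|$) because $\rho_\alpha>0$. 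This makes the paper's divisor-set patching step unnecessary and isolates where flatness is actually used. Your hedging on whether $\|\cdot\|$ denotes the norm or its square is appropriate --- the paper is internally loose on this point, and its own proof of the lemma implicitly uses the squared convention --- and your remark about carrying out the splitting on the desingularization when $Z$ is singular matches the paper's parenthetical treatment of the same issue elsewhere. No gaps.
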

\bigskip
\noindent
{\it Proof.}\ Since $\overline{\partial}\partial \log||\sigma|| = 0$,
it follows that ${\partial \log ||\sigma||}$
is a meromorphic $1$-form. Therefore by the
holomorphic Poincar\'e lemma, and away from divisor set $|(\sigma)|$, 
locally (in the strong topology) we have
$$
{\partial \log ||\sigma||} = {\partial H},
$$
for some holomorphic function $H$. Therefore locally
$$
d\log ||\sigma|| = {\partial H} + \overline{\partial}\ \overline{H}
= (\partial+\overline{\partial})H + (\partial+\overline{\partial})
\overline{H} = d(H+\overline{H}).
$$
Thus $\log||\sigma|| = H+\overline{H} + K$ for some $K\in {\RR}$,
and hence $\rho := h\overline{h}$ where $h= {\rm e}^{H + \frac{K}{2}}$.
Next, locally over the divisor set $|(\sigma)|$, we
can replace $\rho$ by  $\tilde{\rho} = |f|^{2}\rho$, for some
meromorphic function $f$, such that $\log \tilde{\rho}$ is defined
and $\partial\overline{\partial}\log \tilde{\rho} = 0$. Since
$\tilde{\rho}$ is a product (local) of a holomorphic function
times it's conjugate, it follows that $\rho$ is a product of
a meromorphic function times it's conjugate.
\qed

\bigskip
\noindent
{\it Proof of Proposition $3.2.$}\
Observe that if $h\overline{h} = k\overline{k}$
on an open set in ${\CC}^{n}$, with $h$, $k$
meromorphic, then 
$$
\frac{h}{k} = \overline{\biggl(\frac{h}{k}\biggr)}^{-1},
$$
is both $\partial$ and $\overline{\partial}$-closed. Thus
$h = ck$ for some $c\in {\CC}^{\times}$ with $|c|=1$.
Thus over an analytic cover $\{\Delta_{\alpha}\}$ of $Z$,
we can write $||\sigma||\big|_{\Delta_{\alpha}} = 
h_{\alpha}\overline{h}_{\alpha}$, where
$h_{\alpha}$ is a meromorphic function on $\Delta_{\alpha}$. 
Suppose that there is a finite cover $\tilde{Z}\to Z$ such that 
by analytic continuation, $h$ becomes a rational function on
$\tilde{Z}$. Let's write this as $\tilde{h}$. Then
$||\tilde{\sigma}|| = \tilde{h}\overline{\tilde h}$, where 
$\tilde{\sigma}$ is the pullback of $\sigma$ to $\tilde{Z}$. 
This for example
would be the case if the $c$'s above are $m$-th roots
of unity for some $m\in {\NN}$. Arriving at this
situation would imply Proposition 3.2, by putting us in
the setting of Lemma 3.4. However
by a limit argument, we can reduce to this situation.
Put $S^{1} = \{z\in {\CC}\ \big|\ |z|=1\}$. 
Then $\{h_{\alpha}\}$ naturally defines an element in
$H^{0}(Z,{\cal M}_{Z}^{\times}/S^{1})$. We can assume that $Z$
is smooth (and projective). The map $H^{1}(Z,S^{1})
\to H^{1}(Z,{\cal M}_{Z}^{\times}) = 0$ factors through
$H^{1}(Z,{\cal O}_{Z}^{\times}) \to H^{1}(Z,{\cal M}_{Z}^{\times})$,
which is well-known to be zero (see \cite{GH}). 
From the short exact sequence 
$$
0\to S^{1} \to {\cal M}_{Z}^{\times} \to {\cal M}_{Z}^{\times}/S^{1}
\to 0,
$$
we deduce that 
$$
H^{0}(Z,{\cal M}_{Z}^{\times}/S^{1}) \to H^{1}(Z,S^{1}) = H^{1}(Z,
{\RR}/{\ZZ}),
$$
is surjective, where ${\rm e}^{\sqrt{-1}t} : {\RR}/{\ZZ}
\ {\buildrel \sim\over\to}\ S^{1}$. Since the kernel
of the map $H^{2}(Z,{\ZZ})\to H^{2}(X,{\QQ})$ is a finite
group, there is no loss of generality in identifying
$H^{1}(Z,{\RR}/{\ZZ})$ with $H^{1}(Z,{\RR})/H^{1}(Z,{\ZZ})$,
and $H^{1}(Z,{\QQ}/{\ZZ})$ with 
$H^{1}(Z,{\QQ})/H^{1}(Z,{\ZZ})$.
Next, since $||\sigma||\big|_{\Delta_{\alpha}} =
h_{\alpha}\overline{h}_{\alpha}$, it follows that 
${\div}(\{h_{\alpha}\}) = {\div}(\sigma)$, and hence the class of
$\{h_{\alpha}\}$ in $H^{1}(Z,{\RR}/{\ZZ})$, which we
identify with $H^{1}(Z,{\RR})/H^{1}(Z,{\ZZ}) \simeq 
H^{0,1}(Z)/H^{1}(Z,{\ZZ}) \simeq {\Pic}^{0}(Z)$, 
is the class corresponding to the flat line bundle associated
to $\sigma$. Let ${\cal L}\big/{\Pic}^{0}(Z)\times Z$ be
the Poincar\'e bundle, and let $\tilde{\sigma}$ be a
rational section of ${\cal L}\big/{\Pic}^{0}(Z)\times Z$
which doesn't vanish on $\{0\}\times Z$.\footnote{This
is easy to arrange. With respect to a projective embedding
of ${\Pic}^{0}(Z)\times Z$, the twisted bundle ${\cal L}(m)$
is very ample for $m>>1$. One simply chooses a general section
of $\Gamma({\cal L}(m))$ and of $\Gamma({\cal O}(m))$
and assigns $\sigma$ to be the quotient.} Then for
$t$ in some polydisk neighbourhood of $0\in {\Pic}^{0}(Z)$,
one has a family of flat metrics $||\ ||_{t}$ on $L_{t} :=
{\cal L}\big|_{\{t\}\times Z}$, and if we put $\sigma_{t} =
\tilde{\sigma}\big|_{\{t\}\times Z}$, then locally
$||\sigma_{t}||_{t} = h_{t,\alpha}\overline{h}_{t,\alpha}$,
and we arrive at a deformation $\{h_{t,\alpha}\}$ of
$\{h_{0,\alpha}\}$.  Next, by our identifications above,
$H^{1}(Z,{\QQ}/{\ZZ})$ is a dense subset
of the torus $H^{1}(Z,{\RR}/{\ZZ})$, and a point in 
$H^{1}(Z,{\QQ}/{\ZZ})$ corresponds to a class 
$\{\underline{h}_{\alpha}\}$ with corresponding 
$c$'s being $m$-th roots of unity.
Thus with regard to $||\sigma||$,
we can write $\{h_{\alpha}\}$ as a limit of classes
corresponding to points in $H^{1}(Z,{\QQ}/{\ZZ})$.
Namely,
$$
\big\{\underline{\underline{h}}_{t,\alpha} := \big(h_{t,\alpha}
\cdot h^{-1}_{0,\alpha}\big)\cdot h_{\alpha}\big\} 
\quad {\buildrel {t\to 0}
\over \mapsto}\quad \{h_{\alpha}\},
$$
where $\{\underline{\underline{h}}_{t,\alpha}\}$ corresponds
to a class in $H^{1}(Z,{\QQ}/{\ZZ})$ for $t$ in a countably dense
subset of some polydisk neighbourhood of $0\in {\Pic}^{0}(Z)$.
This proves the proposition.
\qed

\section{A Milnor complex}
We first recall the definition of Milnor
$K$-theory \cite{BT}. Let ${\bf F}$ be a field 
with multiplicative group ${\bf F}^{\times}$, and set
$$
 T({\bf F}^{\times}) = \bigoplus_{n\geq 0}T^n({\bf F}^{\times}),
$$
the tensor algebra of the ${\ZZ}$-module ${\bf F}^{\times}$. 
(Here $T^{0}({\bf F}^{\times}) := {\ZZ}$.) Then
${\bf F}^{\times} \ {\buildrel\sim\over \to}\ T^1({\bf F}^{\times})$ by
$a\mapsto [a]$. If $a \ne 0,\ 1$, set $r_a = [a]\otimes [1-a]$ in $T^2({\bf
F}^{\times})$. Then the two-sided ideal $R$ generated by the $r_a$'s is
graded, and we put
$$
 K_{\ast}^M{\bf F} = T({\bf F}^{\times})\big/R = 
\bigoplus_{n\geq 0}K^{M}_n{\bf F}.
$$
 Then $K_{\ast}^M{\bf F}$ may be presented as a ring with generators
$\ell(a)$, for $a\in {\bf F}^{\times}$, subject to the relations
$$
\ell(ab) = \ell(a) + \ell(b), 
$$
$$
\ell(a)\ell(1-a) = 0,\quad a \ne 0,\ 1.
$$
Observe that $K_j^M({\bf F}) = K_j({\bf F})$ for $j = 0,1,2$,
where the latter is Quillen $K$-theory.
 
\bigskip\noindent
{\it 4.1. A twisted Milnor complex.} \
As before let $L$ be a (flat) line
bundle over $Z\subset X$. We view $L$ in terms of a corresponding
Cartier divisor, viz., work on the Cartier divisor level.
Then we first observe that although the set of nonzero rational 
sections ${\Rat}^{\ast}(L/Z)$ of $L$ over $Z$
is not a group, by fixing $Z$,
the union $\coprod_{L/Z}{\Rat}^{\ast}(L/Z)$ can be
endowed with the structure of a group. More specifically,
an element of $\coprod_{L/Z}{\Rat}^{\ast}(L/Z)$ is given 
by a pair $(\sigma,L)$, $\sigma \in 
{\Rat}^{\ast}(L/Z)$, $L/Z$ flat, with product
structure $(\sigma_1,L_1)\star (\sigma_2,L_2) = (\sigma_1\sigma_2,
L_1\otimes L_2)$. Thus we assign
$$
\underline{K}_{1,L/Z}^M({\CC}(Z)) := {\Rat}^{\ast}(L/Z).
$$
In terms of local trivializations, 
$$
\sigma \in {\Rat}^{\ast}(L/Z) \Leftrightarrow 
\big\{\sigma = \{\sigma_{\alpha}\} \mid \sigma_{\alpha} \in K_1(
{\CC}(Z))\ {\rm and} \ g_{\alpha\beta}\sigma_{\beta} = 
\sigma_{\alpha}\big\},
\leqno{(4.2)}
$$
where $\{g_{\alpha\beta}\}$ defines $L$, and 
where $\{\sigma_{\alpha}\}$ lies in
the direct product $\prod_{\alpha}K_1({\CC}(Z))$.  By passing to a direct
limit over refining open covers of $Z$,
the latter term in (4.2), which will still be denoted by
$\underline{K}_{1,L/Z}^M({\CC}(Z))$, is given the
structure we require, and we will assume this to be the case in the
discussion below.  Next, we consider the group
$$
{\underline{K}}^M_{1,Z} = \coprod_{L/Z\ {\rm flat}}
\underline{K}_{1,L/Z}^M({\CC}(Z)) = \coprod_{L/Z\ {\rm flat}}
{\Rat}^{\ast}(L/Z).
$$ 
 
Put
$$
\underline{K}^M_{\bullet,Z}\ =\ \biggl(\sum_{j=0}^{\infty}
\big({\underline{K}}^M_{1,Z}\big)^{\otimes_{\bf Z}j}\biggr)
\biggl/ R^{\bullet},
$$
where $R^{\bullet}$ is the two-sided ideal generated by

\bigskip
1)\ $\big\{\sigma \otimes (-\sigma)\ \big|\ \sigma \in 
\underline{K}^M_{1,Z}\big\}$,

\bigskip
2)\ $\big\{ (1-f)\otimes f\ \big|\ f\in {\CC}(Z)^{\times}\backslash 
\{1\}\big\}$.

\bigskip
The first relation leads to a desired anticommutative property
of products of ``symbols'', and the second relation incorporates
the ``usual'' Steinberg relation in the case of function fields.
Since $R^{\bullet} = \oplus_{j\geq 2}R^{j}$ is graded, we can write 
$$
\underline{K}^M_{\bullet,Z} = \bigoplus_{j=0}^{\infty}
\underline{K}^M_{j,Z},
$$
where for example ${\underline{K}}^M_{0,Z} = {\ZZ}$,
$\underline{K}^M_{1,Z}$ is given above,
and $\underline{K}^M_{2,Z} =$
$$
\big\{{\rm group\ of\ symbols}\ 
\{\sigma_{1},\sigma_{2}\}\ \big/\ {\rm [generalized]\ Steinberg
\ relations}\big\}.
$$
Thus for example, $\{\sigma,-\sigma\} = 1$, hence
one can easily check that
$$
\{\sigma,\sigma\} = \{\sigma,-1\} = \{-1,\sigma\} = 
\{\sigma,\sigma^{-1}\}.
$$
In general, given a symbol $\{\sigma_{1},\sigma_{2}\}$,
and up to rewriting this as a product of other symbols, 
one can always ``factor out'' common divisors
in the divisor sets $|(\sigma_{1})|,\ |(\sigma_{2})|$.

\bigskip
\noindent
4.3.\quad We want to build a twisted Milnor-Gersten complex
out of this, with the $j$-th term given by 
$$
\bigoplus_{\cd_XZ = k-j}\underline{K}_{j,Z}^M.
$$

Thus
$$
\bigoplus_{{\rm codim}_XZ = k} \underline{K}_{0,Z}^M
= \bigoplus_{\cd_XZ = k} K_0({\CC}(Z)),
$$
and the first two (generalized) Tame symbols
$$
{\div} := T^{(1)}: \bigoplus_{{\rm codim}_XZ = k-1}\underline{K}_{1,Z}^M
\to \bigoplus_{\cd_XZ = k}\underline{K}_{0,Z}^M,
$$
$$
T := T^{(2)}: \bigoplus_{\cd_XZ = k-2}\underline{K}_{2,Z}^M
\to \bigoplus_{\cd_XZ = k-1}\underline{K}_{1,Z}^M,
$$
have already been defined.  In order to define higher Tame symbols
$$
T^{(j+1)}: \bigoplus_{\cd_XZ = k-j-1}\underline{K}_{j+1,Z}^M
\longrightarrow \bigoplus_{\cd_XZ = k-j}\underline{K}_{j,Z}^M,
$$
we  digress again \cite{BT}.  This time we 
will assume given a field ${\bf F}$
with a discrete valuation $\nu : {\bf F}^{\times} \to {\ZZ}$, and the
corresponding discrete valuation ring ${\cal O} := \{a \in {\bf F} \ |\
\nu(a) \geq 0\}$, where we assign $\nu (0) = \infty$.  Let $\pi\in {\cal O}$
generate the unique maximal ideal $(\pi)$, i.e., $\nu(\pi) = 1$, and recall
that all other nonzero ideals are of the form $(\pi^m)$, for $m \geq 0$. 
Note that ${\bf F}^{\times} = {\cal O}^{\times}\cdot \pi^{\ZZ}$ (direct
product).  Next let ${\bf k} = {\bf k}(\nu)$ be the residue field, and
$K_{\bullet}^M$ Milnor $K$-theory.  Then there is
a map
$$
d_{\pi} : {\bf F}^{\times} \to (K^{M}_{\bullet}{\bf k}(\nu))(\Pi) :
u\pi^i \mapsto \ell(\overline{u}) + i\Pi,
$$
where $\Pi = \ell(\pi)$, and satisfies $\Pi^2 = \ell(-1)\Pi$. This map
induces
$$
\partial_{\pi} : K^{M}_{\bullet}{\bf F} \to 
(K^{M}_{\bullet}{\bf k}(\nu))(\Pi).
$$
Next, one defines maps
$$
\partial_{\pi}^0,\ \partial_{\nu} : K^{M}_{\bullet}{\bf F} 
\to K^{M}_{\bullet}{\bf k}(\nu),
$$
by
$$
\partial_{\pi}(x) = \partial^0_{\pi}(x) + 
\partial_{\nu}(x)\Pi,\leqno{(4.4)}
$$
which can be shown to be independent of the choice of $\pi$ such that $\nu
(\pi) = 1$.  This hinges on 
an explicit description of $\partial_{\nu}$ (see \cite{BT} Proposition 4.5). 
The (generalized) Tame symbol is
the map $\partial_{\nu} : K^{M}_m{\bf F} \to K^{M}_{m-1}{\bf
k}(\nu)$.  We also have that multiplication is graded
and skew commutative, 
and that $\deg \ell(a) = 1 = \deg \Pi$ for $a \in {\bf
F}^{\times}$.  For example, if we let $a = a_0\pi^i$, $b = b_0\pi^j$, so
that $\nu(a) = i,\ \nu(b) = j$, and let $\overline{a}_0$, $\overline{b}_0$
be the corresponding values in ${\bf k}(\nu)$, then
$$
\partial_{\pi}(a) = d_{\pi}(a) = \ell(\overline{a}_0) + i\Pi,\quad
\partial_{\pi}(b) = d_{\pi}(b) = \ell(\overline{b}_0) + j\Pi,
$$
so that $\partial_{\nu}(a) = i$ and $\partial_{\nu}(b) = j$.  Next,
as shown in \cite{BT},
$$
\partial_{\pi}(\{a,b\}) = (\ell(\overline{a}_0)+i\Pi)
(\ell(\overline{b}_0) +j\Pi)
$$
$$
= \big(\ell(\overline{a}_0)
\ell(\overline{b}_0)\big)\ + \ \big(\ell(\overline{a}_0^j)
-\ell(\overline{b}_0^i) + ij\ell(-1)\big)\Pi
$$
$$
\partial_{\nu}(\{a,b\}) =\ell\biggl((-1)^{ij}\frac{\overline{a}_0^j}
{\overline{b}_0^i}\biggr) = \ell\biggl(\overline{(-1)^{ij}
\frac{a^j}{b^i}}\biggr) = \ell\big(T\{a,b\}\big),
$$
where $T$ is the usual Tame symbol.  
Similarly, for a general product, we
have
$$
\partial_{\pi}(\{a^{(1)},\ldots,a^{(N)}\}) 
= (\ell(\overline{a}^{(1)}_0)+
k_1\Pi)\cdots (\ell(\overline{a}^{(N)}_0) + k_j\Pi)
$$
$$
\hskip.5in 
 = \big(\ell(\overline{a}^{(1)}_0)+ k_1\Pi\big)
\big(\partial_{\pi}^0(\{a^{(2)},
\ldots,a^{(N)}\}) + \partial_{\nu}(\{a^{(2)},
\ldots,a^{(N)}\})\Pi\big)
$$
$$
\hskip.1in 
= \biggl(\ell(\overline{a}^{(1)}_0)
\cdots\ell(\overline{a}^{(N)}_0)\biggr)
+ \biggl((-1)^{N-1}k_1\big(\ell(\overline{a}^{(2)}_0)\cdots
\ell(\overline{a}^{(N)}_0)\big) 
$$
$$
\hskip.5in + \big(\ell(\overline{a}^{(1)}_0) + k_1\ell(-1)\big)
\partial_{\nu}(\{a^{(2)},\ldots,a^{(N)}\})\biggr)\Pi,
$$
whence
$$
\ell(T^{(N)}\{a^{(1)},\ldots,a^{(N)}\}) :=
\partial_{\nu}(\{a^{(1)},\ldots,a^{(N)}\})
$$
$$
= \ell\biggl(\biggl\{\overline{a}^{(2)}_0,\ldots,
\overline{a}^{(N)}_0\biggr\}^{(-1)^{N-1}k_1}\biggl\{
(-1)^{k_1}\overline{a}^{(1)}_0,
T^{(N-1)}(\{a^{(2)},\ldots,a^{(N)}\})\biggr\}\biggr),
$$
where again $T^{(2)} = T$ is the usual Tame symbol above.

For our purposes we define the Tame symbol on $\underline{K}^{M}_{N,Z}$, 
by defining it's value at the generic point
of an irreducible codimension one  $D \subset Z$, viz.,
$$
T_{D}^{(N)}(\{\sigma^{(1)},\ldots,\sigma^{(N)}\}) :=
T^{(N)}(\{\sigma^{(1)},\ldots,\sigma^{(N)}\})\biggl|_{D} =
$$
$$ 
\biggl\{\overline{\sigma}_{0}^{(2)},\ldots,
\overline{\sigma}_{0}^{(N)}\biggr\}^{(-1)^{N-1}\nu_D(\sigma^{(1)})}
\ \times\hskip1in
$$
$$
\hskip.5in \biggl\{(-1)^{\nu_D(\sigma^{(1)})}\overline{\sigma}_{0}^{(1)}, 
T^{(N-1)}(\{\sigma^{(2)},\ldots,\sigma^{(N)}\})\biggr\},\leqno{(4.5)}
$$
where $\sigma^{(j)} = 
\pi_{D}^{\nu_D(\sigma^{(j)})}
\sigma_{0}^{(j)}$, $\pi_{D}$ a local equation 
of $D$ in $Z$, and $\overline{\sigma}_{0}^{(j)}$
the value of ${\sigma}_{0}^{(j)}$ in ${\CC}(D)^{\times}$.
Here it is important to understand that if $L_{j}/Z$
is the flat line bundle associated to $\sigma^{(j)}$, then
$\overline{\sigma}^{(j)}_{0}$ is a section of
$L_{j}\big|_{D}$. In other words, this calculation
occurs over the generic point of $D$, where one fixes
the choice of local equation of $D$ (but see Proposition 4.1(i)
below). Now note that $T^{(2)}(\{\sigma_{g},\sigma_{h}\}) = 
T(\{\sigma_{g},\sigma_{h}\})$ 
involves a section of a flat line bundle.  From this
it follows that $T^{(3)}$ can be defined with symbols of
sections of flat bundles. By induction, it follows that if $T^{(j+1)}$ 
involves symbols of sections of flat bundles, since this is the case
for $T^{(j)}$. If we work modulo $2$-torsion,  the formula
for $T_{D}$ in (6.5) simplifies somewhat, viz.,
$$
T_{D}^{(N)}(\{\sigma^{(1)},\ldots,\sigma^{(N)}\})
$$
$$
\equiv 
\biggl\{\overline{\sigma}_{0}^{(2)},\ldots,
\overline{\sigma}_{0}^{(N)}\biggr\}^{(-1)^{N-1}\nu_D(\sigma^{(1)})}
\biggl\{\overline{\sigma}_{0}^{(1)}, 
T^{(N-1)}(\{\sigma^{(2)},\ldots,\sigma^{(N)}\})\biggr\}
$$
$$
\equiv 
\prod_{j=1}^{N}\biggl\{\overline{\sigma}^{(1)}_{0},\ldots,\widehat{
\overline{\sigma}_{0}^{(j)}},\ldots,\overline{\sigma}_{0}^{(N)}
\biggr\}^{(-1)^{N-j}\nu_{D}(\sigma^{(j)})},
$$
where the latter equivalence modulo $2$-torsion follows by
induction. Since the real regulator is blind to torsion, it
makes sense to redefine the Tame symbol by the formula:
$$
T_{D}^{(N)}(\{\sigma^{(1)},\ldots,\sigma^{(N)}\}) :=
\prod_{j=1}^{N}\biggl\{\overline{\sigma}^{(1)}_{0},\ldots,\widehat{
\overline{\sigma}_{0}^{(j)}},\ldots,\overline{\sigma}_{0}^{(N)}
\biggr\}^{(-1)^{N-j}\nu_{D}(\sigma^{(j)})},\leqno{(4.5.1)}
$$
and accordingly redefine
$$
\underline{K}_{\bullet,Z}^{M} = \biggl\{\bigoplus_{j=0}^{\infty}
\underline{K}^{M}_{j,Z}\biggr\}\biggr/\left\{
\begin{matrix}2-{\rm torsion}\\
{\rm subgroup}\end{matrix}\right\}.\leqno{(4.5.2)}
$$
Thus $\underline{K}^{M}_{j,Z}$ will now be interpreted as
the corresponding group modulo $2$-torsion.
Note that $T^{(1)}$ is still the divisor map, $T^{(1)}$ and
$\underline{T}$ in Proposition 3.1 both agree 
on $\underline{K}^{M}_{2,Z}$ (as we are
working modulo $2$-torsion), and that $T^{(1)}\circ T^{(2)} = 0$. 
Quite generally, we prove the following:

\bigskip
\noindent
\begin{prop}
	{\it Assume given our
modified definition of $T^{(N)}$ in (4.5.1) above.

\bigskip
\noindent
{\rm (i)} The definition of 
$T^{(N)}$ does not depend on the local
equations defining the codimension one $D$'s in $Z$. 

\bigskip
\noindent
{\rm (ii)} Up to $2$-torsion,
$T^{(N)}(R^{N}) \subset R^{N -1}$
for all $N$.

\bigskip
\noindent
{\rm (iii)} $T^{(N)}\circ T^{(N+1)} = 0 \in \bigoplus
\underline{K}^{M}_{N-1,Z}$ 
for all $N$.}

\bigskip
\noindent
\end{prop}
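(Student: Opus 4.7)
The plan is to verify each of the three statements by combining the explicit formula (4.5.1) with classical Milnor K-theory results, taking advantage of the fact that (4.5.1) differs from the Bass-Tate formula (4.5) only by sign factors that become trivial modulo $2$-torsion. All three properties hold for the classical Tame symbol, and our task is to transport them across the passage to flat line bundles.

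For (i), I would proceed by direct computation. If $\pi_D' = u\pi_D$ is another local equation for $D$, the coefficients $\sigma_0^{(j)}$ are replaced by $u^{-\nu_j}\sigma_0^{(j)}$, so their restrictions to $D$ scale by $\overline{u}^{-\nu_j}$, where $\nu_j = \nu_D(\sigma^{(j)})$. Substituting into (4.5.1) and expanding each symbol via multilinearity, the correction factors involving $\overline{u}$ organize into expressions of the form $\{\overline{u},\overline{u},\ldots\}$ (which are $2$-torsion) and cross-terms of shape $\{\overline{u},\overline{\sigma}_0^{(k_1)},\ldots\}^{\pm\nu_j\nu_k}$. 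Summing these across the outer index $j$ and invoking skew-commutativity modulo $2$-torsion shows the cross-terms pair up and cancel, exactly as in the Bass-Tate verification that $\partial_\nu$ is uniformizer-independent.

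For (ii), I would verify the containment on each generator type of $R^\bullet$. First consider a tensor of the form $\alpha \otimes \sigma \otimes (-\sigma) \otimes \beta$ with $\alpha$ of degree $p$ and $\beta$ of degree $q$, where $p+2+q = N$. In the expansion (4.5.1), every summand whose omitted index lies outside the two positions of the generator retains an adjacent pair $\{\ldots,\overline{\sigma},-\overline{\sigma},\ldots\}$, which is $2$-torsion via the defining relation of $R$. The two remaining summands combine under multilinearity, since $\nu_D(\sigma)=\nu_D(-\sigma)$, into a single term differing only by a $\{\ldots,-1,\ldots\}$ factor that is re-expressible as a product of degree-$2$ generators of $R$ modulo $2$-torsion. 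The Steinberg generator $(1-f)\otimes f$ with $f\in \CC(Z)^\times$ is handled analogously, with $\{1-\overline{f},\overline{f}\}=0$ in $K^M_2\CC(D)$ killing the bulk of the terms; the boundary cases where $f$ has a pole or zero along $D$ reduce, via the valuation bookkeeping of Bass-Tate, to the same Steinberg relation for the residues, contributing a term already in $R^{N-1}$.

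For (iii), I would reduce to the classical case. Over any Zariski open $U\subset Z$ on which all relevant flat line bundles trivialize, each section $\sigma^{(j)}$ is identified with an element of $\CC(Z)^\times$, and the modified formula (4.5.1) becomes the $2$-torsion quotient of the usual Tame symbol on $K^M_\bullet\CC(Z)$. The vanishing $T^{(N)}\circ T^{(N+1)}=0$ is then the classical Gersten complex property for Milnor $K$-theory \cite{BT}; covering $Z$ with such trivializing opens and invoking part (i) to patch the local computations yields the global statement. The principal obstacle will be the combinatorial bookkeeping in part (ii): the generators of $R^\bullet$ sit in only two slots of an $N$-fold tensor, but under (4.5.1) they influence every one of the $N$ product factors, and extracting a clean $R^{N-1}$ representative requires careful use of skew-commutativity and the $2$-torsion quotient to discard the persistent sign factors $(-1)^{\nu_D(\sigma^{(1)})}$ inherited from (4.5).
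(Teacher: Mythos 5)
Your parts (i) and (ii) track the paper's argument closely: the paper also disposes of (i) by appeal to the uniformizer-independence of $\partial_\nu$ in (4.4), and proves (ii) by exactly the split you describe -- the summands of (4.5.1) that omit neither slot of the generator retain an adjacent pair $\{\ldots,\overline{\sigma}_0,-\overline{\sigma}_0,\ldots\}$, while the two remaining summands cancel modulo $2$-torsion because $\nu_D(\sigma)=\nu_D(-\sigma)$, and the Steinberg case uses that $f$ is itself a local equation along $D\subset|(f)|$ so that $(\overline{f}_0,\overline{[1-f]}_0)=(1,\pm1)$. (One terminological slip: the surviving pair $\{\ldots,\overline{\sigma}_0,-\overline{\sigma}_0,\ldots\}$ is not $2$-torsion; it lies in $R^{N-1}$, which is what the claim requires.) For (iii) your route genuinely differs. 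The paper expands $T_E\circ T_D$ explicitly via (4.5.1), pairs the $(i,j)$ and $(j,i)$ terms into exponents $\nu_D(\sigma^{(j)})\nu_E(\overline{\sigma}^{(i)}_0)-\nu_D(\sigma^{(i)})\nu_E(\overline{\sigma}^{(j)}_0)$, and kills the sum over $E\subset D\subset Z$ using the identity (4.7), which is just $\div\circ T=0$ from Proposition 3.1; only that base case is reduced to the classical tame symbol by trivializing the bundles. You instead trivialize for all $N$ at once and invoke the untwisted statement wholesale. That is sound -- an open containing the generic point of $E$ contains the generic points of every $D\supset E$, and valuations and leading coefficients are computed at generic points, so the twisted formula literally becomes the untwisted one there -- and it buys a shorter argument whose only new content is the reduction itself. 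What it costs is that the statement you outsource, namely that the composite of two consecutive boundary maps in the Gersten--Milnor complex vanishes, is not actually in Bass--Tate (who treat a single discrete valuation); it is Kato's theorem, or else requires precisely the reciprocity-type computation the paper carries out. So your proof is correct provided you cite the right classical input; the paper's version is self-contained modulo its own Proposition 3.1.
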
 

{\it Proof of Proposition 4.1.}\ The proof of Proposition 4.1
is a straightforward series of calculations. First of all, (i)
is true for the same reasons as in the standard case in (4.4) above.
If we let $\equiv$ have the meaning ``modulo $R^{N-1}$ and
$2$-torsion'', then the proof of (ii) follows from the calculations:
$$
T_{D}\{\sigma,-\sigma,\sigma^{(3)},\ldots,\sigma^{(N)}\}
$$
$$
\equiv \prod_{j=3}^{N}\{\overline{\sigma}_{0},-\overline{\sigma}_{0},\ldots,
\widehat{\overline{\sigma}^{(j)}_{0}},\ldots,\overline{\sigma}^{(N)}_{0}
\}^{\nu_{D}(\sigma^{(j)}(-1)^{N-j}}
\equiv 0.
$$
Likewise,
$$
T_{D}\{f,1-f,\sigma^{(3)},\ldots,\sigma^{(N)}\} \equiv 0.
$$
Here we use the fact that
for $D\subset |(f)|$, $f$ itself is a local equation.
Hence 
$$
(\overline{f}_{0},\overline{[1-f]}_{0}) = (1, [\pm]1)
:= \begin{cases} (1,1)&\text{if $\nu_{D}(f)\geq 0$}\\
&\\
(1,-1)&\text{if $\nu_{D}(f) < 0$}\end{cases}.
$$
To prove (iii), first observe that if $\sigma^{(1)},\ 
\sigma^{(2)}$ are rational sections of a bundle $L/Z$,  then
$$
{\div}\circ T\{\sigma^{(1)},\sigma^{(2)}\} = 0.
$$
This we proved earlier. This translates to saying
that 
$$
\sum_{E\subset D\ (\subset Z)}\big[\nu_{D}(\sigma^{(2)})\nu_{E}(
\overline{\sigma}^{(1)}_{0}) - \nu_{D}(\sigma^{(1)})
\nu_{E}(\overline{\sigma}^{(2)}_{0})\big]\{E\} = 0.\leqno{(4.7)}
$$

We now consider $E\subset D\subset Z$ and compute:
$$
T_{E}\circ T_{D}\{\sigma^{(1)},\ldots,\sigma^{(N)}\}
= \prod_{j=1}^{N}T_{E}\biggl\{\overline{\sigma}^{(1)}_{0},\ldots,
\widehat{\overline{\sigma}^{(j)}_{0}},\ldots,\overline{\sigma}^{(N)}_{0}
\biggr\}^{(-1)^{N-j}\nu_{D}(\sigma^{(j)})}
$$
$$
= 
\prod_{j=1}^{N}\biggl(\prod_{i<j}\biggl\{
\overline{\overline{\sigma}}_{00}^{(1)},\ldots,
\widehat{\overline{\overline{\sigma}}_{00}^{(i)}},\ldots,
\widehat{\overline{\overline{\sigma}}_{00}^{(j)}},\ldots,
\overline{\overline{\sigma}}_{00}^{(N)}\biggr\}^{(-1)^{i+j}
\nu_{D}(\sigma^{(j)})\nu_{E}(\overline{\sigma}^{(i)}_{0})}
$$
$$
\times\
\prod_{i>j}\biggl\{
\overline{\overline{\sigma}}_{00}^{(1)},\ldots,
\widehat{\overline{\overline{\sigma}}_{00}^{(j)}},\ldots,
\widehat{\overline{\overline{\sigma}}_{00}^{(i)}},\ldots,
\overline{\overline{\sigma}}_{00}^{(N)}\biggr\}^{(-1)^{i+j+1}
\nu_{D}(\sigma^{(j)})\nu_{E}(\overline{\sigma}^{(i)}_{0})}
\biggr)
$$
$$
=\prod_{j=1}^{N}\biggl(\prod_{i<j}\biggl\{
\overline{\overline{\sigma}}_{00}^{(1)},\ldots,
\widehat{\overline{\overline{\sigma}}_{00}^{(i)}},
\hskip2.5in
$$
$$
\hskip1in \ldots,
\widehat{\overline{\overline{\sigma}}_{00}^{(j)}},\ldots,
\overline{\overline{\sigma}}_{00}^{(N)}\biggr\}^{(-1)^{i+j}
[\nu_{D}(\sigma^{(j)})\nu_{E}(\overline{\sigma}^{(i)}_{0})
- \nu_{D}(\sigma^{(i)})\nu_{E}(\overline{\sigma}^{(j)}_{0})}\biggr).
$$
By summing over $E\subset D\subset Z$ and using (4.7), this
proves the proposition.
\qed

\bigskip
It follows then that one has a twisted Milnor 
complex of  the form 
$$
\underline{K}^{M}_{k-\bullet,X}: \quad \underline{K}_{k,X}^M \to
\bigoplus_{\cd_XZ = 1}\underline{K}_{k-1,Z}^M\to
$$
$$
\cdots\to
\bigoplus_{\cd_XZ = k-j-1}\underline{K}_{j+1,Z}^M\to
\bigoplus_{\cd_XZ = k-j}\underline{K}_{j,Z}^M\to\cdots
$$
$$
\cdots \to\bigoplus_{\cd_XZ = k-2}\underline{K}_{2,Z}^M\to
\bigoplus_{\cd_XZ = k-1}\underline{K}_{1,Z}^M\to
\bigoplus_{\cd_XZ = k}\underline{K}_{0,Z}^M\to 0.
\leqno{(4.7.1)}
$$
 Let
$$
H^{k-m}(\underline{K}^{M}_{k-\bullet,X}) :=\hskip2in
$$
$$
\frac{\ker T^{(m)}: \bigoplus_{\cd_XZ = k-m}
\underline{K}_{m,Z}^M \to \bigoplus_{\cd_XZ = k-m+1}
\underline{K}_{m-1,Z}^M}{T^{(m+1)}\big(\bigoplus_{\cd_XZ = k-2}
\underline{K}_{m+1,Z}^M\big)}.
$$

We now state our main result:

\bigskip
\noindent
\begin{thm}

\it Assume $m\geq 1$.  The current defined by
$$
\begin{matrix}\prod_{1}^{m}\sigma_j\in 
\prod_{1}^{m}{\Rat}^{\ast}(L_{j}/Z)\\
\cd_XZ = k-m\end{matrix}
\ \mapsto \biggl[\omega \mapsto 
$$
$$
\sum_{\ell=1}^m\int_Z(-1)^{\ell-1}
\log ||\sigma_\ell||
(d\log ||\sigma_1||\wedge\cdots \wedge \widehat{d\log ||\sigma_{\ell}||}
\wedge\cdots \wedge d\log ||\sigma_m||)\wedge \omega\biggr]
$$
descends to a cohomological map $\underline{r}_{k,m}$:
$$
\underline{CH}^{k}(X,m) := H^{k-m}(\underline{K}^{M}_{k-\bullet,X})\to  
\frac{\big\{H^{k-1,k-m}(X)\oplus H^{k-m,k-1}(X)\big\}
\cap H^{2k-m-1}(X,{\RR})}{D(k,m)},
$$
where 
$$
D(k,m) = \begin{cases} H^{2k-2}_{\rm alg}(X,{\QQ})
\otimes {\RR}&\text{if $m=1$}\\
0&\text{if $m>1$}\end{cases}.
$$
That is, this map does not depend on the choice of flat metrics on the 
respective flat bundles. Moreover, assume given $D$, smooth of
dimension $k-1$, and a morphism $f : D\to X$
such that $f(D)$ is in ``general position''. If $\omega
\in H^{m-1,0}(D)\oplus H^{0,m-1}(D)$ and $\xi\in  
H^{k-m}(\underline{K}^{M}_{k-\bullet,X})$ 
are given, then $\underline{r}_{k,m}(\xi)(f_{\ast}\omega)$
is induced by
$$
\begin{matrix}\prod_{1}^{m}\sigma_j\in 
\prod_{1}^{m}{\Rat}^{\ast}(L_{j}/Z)\\
\cd_XZ = k-m\end{matrix}
$$
$$
 \mapsto \ \biggl[\omega \mapsto \sum_{\ell=1}^m\int_{Z\cap D := 
 f^{-1}(Z)}(-1)^{\ell-1}
\log ||\sigma_\ell||
(d\log ||\sigma_1||\wedge
$$
$$
\cdots \wedge \widehat{d\log ||\sigma_{\ell}||}
\wedge\cdots \wedge d\log ||\sigma_m||)\wedge \omega\biggr].
$$
Furthermore, as a current acting on $E_{X}^{2n-2k+m+1}$, if
$\underline{r}_{k,m}(\xi)$ is $d$-closed, then
it restricts to the zero class in $H^{2k-m-1}(X\bs V)$ where
$V = \bigcup_{\alpha}Z_{\alpha}$ is 
the support of $\xi = \sum_{\alpha}\biggl(\prod_{1}^{m}
\sigma_{j,\alpha},Z_{\alpha}\biggr) \in 
H^{k-m}(\underline{K}^{M}_{k-\bullet,X})$.
In this case, $\underline{r}_{k,m}(\xi)$ lies in the
Hodge projected image $N^{k-m}H^{2k-m-1}(X,{\RR}) \to 
H^{k-1,k-m}(X) \oplus H^{k-m,k-1}(X)$.
\end{thm}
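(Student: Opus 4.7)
The plan is to establish the theorem through five steps: (i) well-definedness of the current on test forms, (ii) its $\partial\overline{\partial}$-closedness, (iii) vanishing on $\operatorname{Im}(T^{(m+1)})$, giving descent to the cohomology of the twisted Milnor complex, (iv) independence of the flat metrics modulo $D(k,m)$, and (v) the Hodge-type, pairing, and coniveau assertions. For (i), the integrand has only logarithmic singularities along $|\operatorname{div}(\sigma_j)|$ on $Z$: $\log ||\sigma_j||$ is locally $L^1$ and each $d\log ||\sigma_j||$ is in $L^1_{\operatorname{loc}}$; a blow-up resolution of $Z$ as in Proposition 3.3 reduces convergence to standard estimates against smooth test forms of complementary degree on $X$. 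For (ii), I would generalize the calculation in the proof of Proposition 3.3: write $\omega=\partial\overline{\partial}\eta$, use the flatness identities $\partial\overline{\partial}\log ||\sigma_j||=0$ away from the divisors, integrate by parts iteratively, and cancel the $\epsilon$-tube contributions using $\lim_{\epsilon\to 0^+}\epsilon\log\epsilon=0$. The surviving boundary terms assemble into a residue sum that, by direct calculation, equals a constant multiple of the regulator of the Tame symbol $T^{(m)}(\prod_j\sigma_j)$, and this vanishes whenever $\prod_j\sigma_j$ represents a class in $H^{k-m}(\underline{K}^M_{k-\bullet,X})$.

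Step (iii) is the main technical obstacle and generalizes Proposition 3.2. For $\zeta\in\underline{K}^M_{m+1,Z}$ with $\operatorname{codim}_X Z=k-m-1$ one must show $\underline{r}_{k,m}(T^{(m+1)}(\zeta))=0$. I would first treat the untwisted subcase in which all flat line bundles are trivial, so that $\sigma_j=f_j\in\mathbb{C}(Z)^\times$; here the statement reduces to the classical vanishing of the real regulator on the standard Milnor $K$-theoretic Tame symbol (Lemma 3.4), propagated inductively through the formula (4.5.1) and the relation $T^{(N)}\circ T^{(N+1)}=0$ of Proposition 4.1(iii). For the twisted case I would apply the density argument of Proposition 3.2 simultaneously to each flat bundle $L_1,\ldots,L_{m+1}$: deform each $L_i$ within $\operatorname{Pic}^0(Z)$ along a family whose limit set includes $H^1(Z,\mathbb{Q}/\mathbb{Z})$-classes, for which the corresponding flat bundles are of finite order and become trivial on a common finite cover $\widetilde{Z}\to Z$; the pulled-back problem falls into the untwisted case, and a continuity argument in the current topology, based on the uniform bound $\lim_{\epsilon\to 0^+}\epsilon\log\epsilon=0$ along the deformation, extends the vanishing back to the original data.

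For step (iv), a change of flat metric replacing $||\ ||_{L_\ell}$ by a second flat metric multiplies $||\sigma_\ell||$ by a positive constant $\lambda_\ell$, so $\log ||\sigma_\ell||$ shifts by $\log\lambda_\ell$ while $d\log ||\sigma_\ell||$ is unchanged. The difference of the two regulator currents is then a finite sum of terms of the form $\log\lambda_\ell\int_Z\bigwedge_{j\neq\ell}d\log ||\sigma_j||\wedge(?)$, each supported on iterated intersections of the $\operatorname{div}(\sigma_j)$'s with $Z$; for $m=1$ this lands in $H^{2k-2}_{\operatorname{alg}}(X,\mathbb{Q})\otimes\mathbb{R}$, matching $D(k,1)$, while for $m>1$ the cocycle conditions $\sum_j\operatorname{div}(\sigma_j)=0$ collapse the difference at each face, forcing it to vanish. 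The Hodge-type constraint follows by complex-conjugation symmetry: $\log ||\sigma_\ell||$ is of type $(0,0)$, each $d\log ||\sigma_j||$ decomposes as $\partial\log ||\sigma_j||+\overline{\partial}\log ||\sigma_j||$ with the two pieces exchanged under conjugation, and the resulting real cohomology class in $H^{2k-m-1}(X,\mathbb{R})$ has nontrivial components only in bidegrees $(k-1,k-m)$ and $(k-m,k-1)$. The pairing formula with $f_\ast\omega$ for $\omega\in H^{m-1,0}(D)\oplus H^{0,m-1}(D)$ then follows from a pullback-and-trace computation in the style of Proposition 2.4 applied termwise to the $\ell$-th summand of the regulator.

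Finally, the current is by construction supported on $V=\bigcup_\alpha Z_\alpha$, so its restriction to $X\setminus V$ vanishes identically; if $\underline{r}_{k,m}(\xi)$ is $d$-closed, the induced cohomology class restricts to zero in $H^{2k-m-1}(X\setminus V)$, and the localization long exact sequence together with standard coniveau theory places the class in $N^{k-m}H^{2k-m-1}(X,\mathbb{R})$, which then projects as stated into $H^{k-1,k-m}(X)\oplus H^{k-m,k-1}(X)$. I expect the principal difficulty to lie in step (iii): one must extend the single-bundle density argument of Proposition 3.2 to a simultaneous deformation of several flat bundles on $Z$, uniformly controlling the convergence of the currents and tracking the compatibility of each limit with the iterated combinatorial structure of the Tame symbol encoded in (4.5.1).
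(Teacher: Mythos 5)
Your proposal follows essentially the same route as the paper's (admittedly sketchy) proof: reduce the twisted Steinberg and Tame-symbol vanishing to the untwisted case of [Lew1] via finite covers and the density argument of Proposition 3.2, obtain metric independence from the observation that a change of flat metric shifts $\log||\sigma_\ell||$ by a constant (whose contribution vanishes for $m\geq 2$ by Stokes' theorem and lands in $D(k,1)$ for $m=1$), and derive the final coniveau statement from the fact that the current is supported on $V$. The paper itself only sketches these steps, deferring the untwisted analysis to [Lew1]; your outline is, if anything, more explicit about the main technical difficulty (the simultaneous deformation of several flat bundles to torsion points of $\Pic^0(Z)$), which the paper elides with ``similar to what we did earlier in 3.2.''
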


\bigskip
\noindent
{\it Proof.}\  While it would be redundant to give a complete proof 
of this theorem, it is important to explain the new ingredients
required to make the proof in the nontwisted case ([Lew1]) adaptible
to the twisted situation. Firstly, the theorem is already proven in 
the case $m=1$, this being the import of previous sections. Next,
if $||\sigma_{\ell}||\in {\RR}^{\times}$ is
constant for some $\ell$, then some standard estimates together
with a Stokes' theorem argument implies
that the corresponding regulator value
on closed forms $\omega$ is zero for $m\geq 2$. This leads to
independence of the flat metric, after quotienting out by $D(k,m)$,
for $m\geq 1$. To show for example that
$\big\{(\sigma,-\sigma,\sigma_{3},\ldots,\sigma_{m}),
Z\big\}$ or say $\big\{(f,1-f,\sigma_{3},\ldots,\sigma_{m}),Z\big\}$
goes to zero under the real regulator, amounts to reducing
to the case where the $\sigma_{j}$'s are rational functions
via some finite covering $Z^{\prime}\to Z$, similar to what we did
earlier in 3.2, and then applying the same arguments given
in \cite{Lew1}. That the current given in 4.2 is 
$\partial\overline{\partial}$-closed now follows from
the same proof as given in [Lew1]. Finally, the latter statement
of the theorem is rather 
easy to prove. Being $d$-closed implies that we have a cohomology
class on $X$, which restricts to a cohomology class
on $X\backslash V$; moreover the current clearly vanishes on those forms
compactly supported on $X\backslash V$. \qed

\section{Some examples}
In contrast to the various 
vanishing results in the literature for the
regulator in the nontwisted case
(\cite{MS1}, \cite{Ke1}-\cite{Ke2}, \cite{Co}, etc.), 
we exhibit some nonvanishing 
regulator results in the twisted case.

\bigskip
\noindent
{\it Regulator  on $H^{0}(\underline{K}^{M}_{2-\bullet,X})$ for a 
curve $X$.}\
Let $X$ be a compact Riemann surface of genus $g\geq 1$, and let $f, g\in 
{\CC}(X)^{\times}$ be given. Write $T\{f,g\} = 
\sum_{j=1}^{M}(c_{j},p_{j})$, where $p_{j}\in X$ and $c_{j}\in 
{\CC}^{\times}$. Then  $\prod_{j=1}^{M}c_{j} = 1$ by
Weil reciprocity.
Now fix $p\in X$, and let $L_{j}$ be a choice of line bundle 
corresponding to the zero cycle $p_{j}-p$. Since $\deg(p_{j}-p)
= 0$, $L_{j}$ is a flat line bundle. There exists rational
sections $\{\sigma_{j}\}$ of the flat bundles $\{L_{j}\}$ over $X$ 
such that $\div(\sigma_{j}) = p_{j}-p$. Thus one
can easily check by a Tame symbol calculation that 
$\xi := \{f,g\}\prod_{j}\{\sigma_{j},c_{j}\} 
\in H^{0}(\underline{K}^{M}_{2-\bullet,X})$. Note
that 
$$
d\big(\log|f|\log|g|\big) = \log|f|d\log|g| + \log|g|d\log|f|,
$$
and by a Stokes' theorem  argument together with standard estimates,
$$
\int_{X}\log|f|d\log|g|\wedge\w = -\int_{X}\log|g|d\log|f|
\wedge\w,\leqno{(5.1)}
$$
for any $d$-closed form 
$\omega\in  E_{X,{\RR}}^{1}$. Thus if either $f$ or $g$ were
constant, then both integrals in (5.1) would vanish. Applying
the same reasoning to the terms $\prod_{j}\{\sigma_{j},c_{j}\}$,
it easily follows that
$$
\underline{r}_{2,2}(\xi)(\omega) = \int_X\biggl[\log|f|d\log|g| - 
\log|g|d\log|f|\biggr]\wedge\omega,\leqno{(5.2)}
$$
namely the contribution of the terms $\prod_{j}\{\sigma_{j},c_{j}\}$
to the regulator current is zero.
One can always find $f$ and $g$ (see [Lew1]) such that the computation
in (5.2) is nonzero for general $X$. For a simple example,
consider this. Let $E$ be a general elliptic curve, $D$ another
general curve, and $X\subset E\times D$ a general hyperplane section.
Since $X$ dominates $E$, and that the real regulator for $E$
is nontrivial ([Blo3]), such an $f$ and $g$ can be found for
$X$ via pullback of corresponding rational functions
on $E$, and then by continuity, the same story will hold as
$X$ varies with general moduli. Thus in summary, one can find
general curves of genus $g >> 1$ for which the regulator
$$
\underline{r}_{2,2} : H^{0}(\underline{K}^{M}_{2-\bullet,X})
\to H^{1}(X,{\RR}),
$$ 
is nontrivial. This is in complete constrast to the situation
of the regulator in (0.1), viz.,  $r_{2,2} : \CH^{2}(X,2) \simeq
H^{0}_{\rm Zar}(X,{\cal K}_{2,X}) \to H^{1}(X,{\RR})$,
where it is known ([Co]) that $r_{2,2}$ is trivial for
sufficiently general $X$ of genus $g > 1$. Indeed, one can
naively carry out the same construction above to
arrive at a class $\xi \in H^{0}_{\rm Zar}(X,{\cal K}_{2,X})$
arising from rational functions $f,\ g$ on $X$ with
$T\{f,g\} = \sum_{j=1}^{M}(c_{j},p_{j})$, 
where $p_{j}\in X$ and $c_{j}\in {\CC}^{\times}$ and
$$
\int_X\biggl[\log|f|d\log|g| - 
\log|g|d\log|f|\biggr]\wedge\omega \ne 0.
$$
The issue boils down to finding a {\it rational}\
functions $h_{j}$ on $X$ for which div$(h_{j}) = N(p_{j}-p)$,
for some integer $N\ne 0$,
which is not in general possible. In fact the difficulty
of finding such $h_{j}$  amounts to finding 
torsion points on $X$, and is related to the known affirmative
answer to the Mumford-Manin conjecture.
For a discussion of the
relation of the Mumford-Manin conjecture to this
regulator calculation, the reader can consult
\cite{Lew1}, as well as the references cited there.

\bigskip
\noindent
{\it 5.3. Regulator  on $H^{1}(\underline{K}^{M}_{1-\bullet,X})$ for
a surface $X$.}\
For a simple example of a nontrivial twisted regulator calculation,
where the usual regulator vanishes, consider the case $X = M\times N$,
where $M$ and $N$ are smooth curves. If $M$ and $N$ are sufficiently 
general with $g(M)g(N) \geq 2$, then the image of the  regulator
in (0.1) vanishes (modulo the  group of algebraic cocycles) ([C-L1]). Now
suppose we are given a curve $C\subset M\times N$, $f\in  
{\CC}(C)^{\times}$, and $\omega\in H^{1}(M,{\CC})\otimes H^{1}
(N,{\CC}) \bigcap H^{1,1}(X,{\RR}(1))$ for which
$$
\int_{C}\omega\log|f| \ne 0.\leqno{(5.4)}
$$
Such a situation in (5.4) is fairly easy to arrive at
for general $M$ and $N$, by a deformation from a special
case situation. (For example, one can use a construction 
in \S7 of [Lew2], or $M\times N$ can be a general deformation
of a product of $2$ curves dominating a general Abelian surface,
together with the main results of [C-L2].)
Write
$$
{\rm div}(f)_{C} = \sum_{j=1}^{m}[(p_{j},q_{j})  - (s_{j},t_{j})].
$$
We can write
$$
(p_{j},q_{j})  - (s_{j},t_{j}) = [(p_{j},q_{j}) - (p_{j},t_{j})] 
+ [(p_{j},t_{j}) - (s_{j},t_{j})].
$$
Now put $D_{j} = \{p_{j}\}\times N$ and $K_{j} = M\times \{t_{j}\}$,
and observe that the degree zero divisor $(p_{j},t_{j}) - (p_{j},q_{j})$
on $D_{j}$ corresponds to a flat line bundle on $D_{j}$, and
likewise the degree zero divisor $(s_{j},t_{j}) - (p_{j},t_{j})$
on $K_{j}$ corresponds to a flat line bundle on $K_{j}$.
Consider Cartier divisors $\sigma_{j}$ on $D_{j}$,
$\eta_{j}$ on $K_{j}$, i.e. rational sections of the
respective flat line bundles, with
$$
{\div}(\sigma_{j}) = (p_{j},t_{j}) - (p_{j},q_{j}),
$$
$$
{\div}(\eta_{j}) = (s_{j},t_{j}) - (p_{j},t_{j}).
$$
Then
$$
\xi := (f,C) + \sum_{j=1}^{m}(\sigma_{j},D_{j}) + 
\sum_{j=1}^{m}(\eta_{j},K_{j})
\in H^{1}(\underline{K}^{M}_{1-\bullet,X}).
$$
[Note: As in the previous example, observe that one
cannot replace the  $\sigma_{j}$'s (resp. $\eta_{j}$'s)
by rational functions on the $D_{j}$'s (resp. $K_{j}$'s),
even if one replaces $(p_{j},t_{j}) - (p_{j},q_{j})$ and
$(s_{j},t_{j}) - (p_{j},t_{j})$ by nonzero integral multiples.]
Moreover the pullback of $\omega$ to $D_{j}$ and $K_{j}$ is zero.
Thus:
$$
\underline{r}(\xi)(\omega) = \int_{C}\omega\log|f| \ne 0.
$$
Finally, observe that for general $M$ and $N$,
$\big\{H^{1}(M,{\QQ})\otimes H^{1}(N,{\QQ})\big\} \cap
H^{2}_{\alg}(M\times N,{\QQ}) = 0$. Thus
while  the image of the  regulator
in (0.1) for $M\times N$ vanishes (modulo the  group of 
algebraic cocycles), the twisted regulator does not vanish.

Next, we will introduce twisted Hodge $\Dd$ conjecture. First we will introduce the strong twisted Hodge $\Dd$ conjecture, and twisted Hodge $\Dd$ conjecture, and introduce the philosophy that we shall follow in proving it for certain family of smooth projective varieties. 

\begin{conj}[\textbf{Strong Twisted Hodge $\Dd$ conjecture}]
The morphism 
$$\underline{r}_{k,m} \otimes \mathbb{R} : \underline{CH}^{k}(X,m) \otimes \mathbb{R} \rightarrow \frac{\big\{H^{k-1,k-m}(X)\oplus H^{k-m,k-1}(X)\big\}
\cap H^{2k-m-1}(X,{\RR})}{D(k,m)}$$

is surjective.

\end{conj}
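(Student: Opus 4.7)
The plan focuses on the surface case, which is what the paper claims to prove. Let $X$ be a smooth projective surface, so $n=2$, $k=2$, $m=1$; the target reduces to $H^{1,1}(X,\RR)/(H^{1,1}_{\alg}(X,\QQ)\otimes\RR)$, the transcendental part of the $(1,1)$-cohomology modulo algebraic classes. Because the image of $\underline{r}_{2,1}\otimes\RR$ is an $\RR$-linear subspace of a finite-dimensional real vector space, it is automatically closed, so it suffices to prove the image is dense.

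First I would produce a large supply of twisted cycles by generalizing the construction of Example 5.3. Given a smooth curve $C\subset X$ and any degree-zero divisor $D=\sum_i n_i p_i$ on $C$, there exists a flat line bundle $L\to C$ with a rational section $\sigma$ satisfying $\div(\sigma)=D$. To promote this to an element of $\underline{z}^2(X,1)$, pick a reference point $p_0\in X$, and for each point $p_i$ choose an auxiliary curve $E_i\subset X$ through $p_i$ and $p_0$; since $p_0-p_i$ has degree zero on $E_i$, there is a flat line bundle on $E_i$ with a rational section $\tau_i$ of divisor $p_0-p_i$. Assembling $\xi=(\sigma,L,C)+\sum_i n_i(\tau_i,L_{E_i},E_i)$ produces an honest twisted cycle with $\sum\div=0$ on $X$. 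This flexibility---that no point on $X$ obstructs the construction---is exactly what the twisted group gains over the ordinary higher Chow group.

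Next I would compute the regulator modulo $D(2,1)$. For $[\omega]\in H^{1,1}(X,\RR)$ the principal term is $\int_C\omega\,\log\|\sigma\|_L$; the auxiliary terms $n_i\int_{E_i}\omega\,\log\|\tau_i\|$ can be forced into the algebraic quotient by choosing the $E_i$ general or rational, so that $i_{E_i}^{\ast}[\omega]$ lies in the algebraic part of $H^{1,1}(E_i,\RR)$. For $[\omega]$ primitive with respect to $[C]$, the principal integral then computes, up to a normalization constant, the real Abel--Jacobi pairing of $D$ with $[\omega]|_C$ via $\Pic^0(C)$.

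Finally, as $C$ varies through a Lefschetz pencil $\{C_t\}_{t\in\PP^1}$ on $X$ (with base-point blow-ups absorbed into the algebraic quotient) and $D_t$ varies as a section of the relative Jacobian $\{\Pic^0(C_t)\}$, I would show the regulator values span the transcendental part of $H^{1,1}(X,\RR)$ by invoking Griffiths' theory of normal functions and their infinitesimal invariants: every transcendental Hodge $(1,1)$-class on $X$ arises from the infinitesimal invariant of an admissible normal function valued in the relative $\Pic^0$, and each such normal function is realized by a family $D_t$ as above. The main obstacle is controlling the genericity carefully: for very special $X$ the untwisted Hodge-$\Dd$ regulator is known to vanish (M\"uller--Stach, Colombo, et al.), and the proof must demonstrate explicitly that the extra $\Pic^0(C)$-parameter restores surjectivity---in other words, the normal-function construction must work over $\RR$ and survive the quotient by $D(2,1)$, with the auxiliary correction terms provably contained in the algebraic part. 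Combined with the closedness of the image, density then upgrades to full surjectivity.
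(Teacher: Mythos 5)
The statement you are proving is labelled a \emph{Conjecture} in the paper: there is no proof of the Strong Twisted Hodge $\Dd$ conjecture anywhere in the text, and your proposal should not be read as recovering one. The closest the paper comes is the case $k=2$, $m=1$ for a \emph{general} smooth projective surface (Theorem 2.1 and Corollary 2.2) and for products of elliptic curves (Theorem 2.5); the conjecture as stated covers all $k$ and all $m$, including $m\ge 2$ where $D(k,m)=0$ and there is no algebraic quotient to absorb error terms. Your proposal silently restricts to the surface case, so even if every step worked it would establish only the weakest instance of the statement.

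Within the surface case there are two concrete gaps. First, the claim that the correction terms $n_i\int_{E_i}\omega\log\|\tau_i\|$ ``can be forced into the algebraic quotient by choosing the $E_i$ general or rational'' does not go through: a general surface carries no rational curves, and for a general curve $E_i\subset X$ the restriction of a transcendental $(1,1)$-form $\omega$ to $E_i$ is not zero, so these contributions are not visibly algebraic. The paper's own mechanism is different and is the technical heart of the proof of Theorem 2.1: one arranges the flat metrics so that $\|\beta^j_{t,\epsilon}\|\to 1$ as $(\epsilon,t)\to(0,t_0)$, so the corrections tend to zero and linear independence of the regulator images survives by continuity of a determinant. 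Second, your final spanning step --- ``every transcendental Hodge $(1,1)$-class arises from the infinitesimal invariant of an admissible normal function valued in the relative $\Pic^0$, and each such normal function is realized by a family $D_t$'' --- is asserted, not proved, and is essentially equivalent to the surjectivity being claimed; in the untwisted setting the analogous assertion is known to \emph{fail} for general surfaces, which is precisely why the paper instead degenerates to a surface $X_0$ of maximal Picard rank (where $H^{1,1}(X_0,\RR)$ is entirely algebraic and the classical Hodge-$\Dd$ statement is available), and then propagates the bound $\dim_{\RR}\underline{r}_{2,1}(\underline{\CH}^2(X_t,1)\otimes\RR)\ge\lambda_0$ to general $t$ by the deformation argument of Theorem 2.1. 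If you want to pursue the normal-function route you must supply a proof of the spanning statement in the twisted setting; as written it is the missing idea rather than a lemma you can invoke.
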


\begin{conj}[\textbf{Twisted Hodge $\Dd$ conjecture}]
The morphism 
$$\underline{r}_{k,1} \otimes \mathbb{R} : \underline{CH}^{k}(X,1) \otimes \mathbb{R} \rightarrow \frac{\big\{H^{k-1,k-1}(X)\oplus H^{k-1,k-1}(X)\big\}
\cap H^{2k-2}(X,{\RR})}{D(k,1)}$$

is surjective.

\end{conj}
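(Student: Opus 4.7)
The plan is to establish the conjecture for a general smooth projective surface $X$, which is the case announced in the abstract. For $\dim X = 2$ only the pair $k=2$, $m=1$ is non-trivial; here $D(2,1)=H^{2}_{\mathrm{alg}}(X,\QQ)\otimes\RR$, so the target is the transcendental part $H^{1,1}_{\mathrm{tr}}(X,\RR)$. A representative of a class in $\underline{CH}^{2}(X,1)$ is a finite sum $\sum_{i}(\sigma_{i},\|\ \|_{L_{i}},V_{i})$ with $V_{i}\subset X$ an irreducible curve, $L_{i}\in\Pic^{0}(V_{i})$, $\sigma_{i}\in\Rat^{*}(L_{i})$, and $\sum_{i}\div(\sigma_{i})=0$ on $X$; the regulator is $\sum_{i}\int_{V_{i}}\omega\log\|\sigma_{i}\|_{L_{i}}$. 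The crucial extra freedom over classical higher Chow cycles is that $L_{i}$ ranges over the full $\Pic^{0}(V_{i})$, so $\sigma_{i}$ can have any degree-zero divisor on $V_{i}$: $\log\|\sigma_{i}\|_{L_{i}}$ is a Green's function for an arbitrary degree-zero divisor on $V_{i}$ rather than a principal one.

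Given this, I would choose a Lefschetz pencil $\pi\colon\tilde X\to\PP^{1}$ (blowing up base points if necessary) whose generic fibre is a smooth curve $C_{t}$ of large genus, and over the open $U\subset\PP^{1}$ parametrising smooth fibres I would form the relative Jacobian $\mathcal{J}\to U$. A section of $\mathcal{J}$ is the same data as a family $(L_{t},\sigma_{t})$ of flat line bundles with rational sections along the $C_{t}$, and for a test form $\omega\in H^{1,1}_{\mathrm{tr}}(X,\RR)$ the function $t\mapsto \int_{C_{t}}\omega\log\|\sigma_{t}\|_{L_{t}}$ is an archimedean height whose first-order behaviour in $t$ is controlled by the Gauss--Manin connection on $H^{1}(C_{t})$. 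Twisted cycles on $X$ would be assembled out of such sectional families, combined across two pencils (or with auxiliary correcting curves) so that the divisor-sum condition $\sum_{i}\div(\sigma_{i})=0$ holds globally; this combination is always possible because any degree-zero zero-cycle on a sufficiently ample curve is the divisor of a rational section of a suitable flat line bundle. Using Theorem 4.2 the regulator of the resulting $\xi$ is then the pairing with $\omega$ of a $\partial\overline{\partial}$-closed current that descends, modulo $D(2,1)$, to a class in $H^{1,1}_{\mathrm{tr}}(X,\RR)$.

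The hard part is showing that the regulator image sweeps out all of $H^{1,1}_{\mathrm{tr}}(X,\RR)$ rather than a proper subspace. This would reduce to a non-degeneracy statement for a cup-product pairing between $\omega$ and the Gauss--Manin-twisted variation of sections of $\mathcal{J}$, in the same spirit as the reasoning invoked in \S5.3 for products of curves dominating an Abelian surface. For a general $X$ this non-degeneracy should follow from the irreducibility of the monodromy representation on the primitive cohomology of a general Lefschetz pencil, combined with the fact that transcendental classes pair non-trivially with vanishing cycles; a density/deformation argument analogous to the one at the end of \S5.3 then shows that arbitrary $\omega$ are reached. The chief technical burdens are (i) maintaining the divisor-sum condition across the entire family as one deforms the sectional data, and (ii) matching the resulting regulator to any prescribed transcendental class rather than only to a spanning subspace, with (ii) being the delicate step that requires the genericity hypothesis on $X$.
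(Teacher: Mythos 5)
Your proposal and the paper's argument diverge fundamentally, and your version has a genuine gap at exactly the step you flag as ``the hard part.'' The paper does \emph{not} try to prove surjectivity directly by a monodromy or Gauss--Manin non-degeneracy argument. Its mechanism (Theorem 6.1 and Corollary 6.2 of the surfaces section) is a semicontinuity-plus-degeneration argument: fix a special fibre $X_0$ on which the \emph{classical} Hodge-$\Dd$ conjecture is already known (a surface of maximal Picard rank), take a basis of classical higher Chow cycles $(f_j,D_j)$ realizing the full regulator rank $\lambda_0$ there, and then show that on nearby fibres $X_t$ each such cycle can be deformed (via the $\epsilon$-perturbation $h_\epsilon = 1 + (f-1)k/(k+\epsilon)$ of a lifted rational function, together with auxiliary smooth curves $C^j_{i,t}$ through the points $p^j_i(t), q^j_i(t)$ carrying flat line bundles whose sections have the prescribed degree-zero divisors) into a genuine \emph{twisted} cycle whose regulator value converges to the original one; linear independence then survives by continuity of a determinant. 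The twisted structure is used only to close up the deformed precycles, and the surjectivity input is entirely borrowed from the known special fibre. Your route instead attempts to generate the whole of $H^{1,1}_{\mathrm{tr}}(X,\RR)$ from sections of the relative Jacobian of a Lefschetz pencil and to deduce non-degeneracy from irreducibility of the monodromy representation.

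The gap is that this last step is asserted, not proved, and there is good reason to doubt it can be proved in the form you state. Irreducibility of monodromy on primitive cohomology is compatible with the regulator image being zero: for the classical regulator this is exactly what happens for sufficiently general surfaces (e.g.\ general products of curves with $g(C_1)g(C_2)\ge 2$, as recalled in \S 5.3 of the paper), even though the monodromy there is as large as possible. So ``irreducible monodromy $+$ pairing with vanishing cycles'' cannot by itself force the image to be everything; one must exhibit, for each target class, an explicit twisted cycle with nonzero pairing, and your sketch never produces one. You also do not address how the divisor-sum condition $\sum_i\div(\sigma_i)=0$ is maintained as the sectional data varies in the family (your own item (i)), which in the paper is precisely the delicate point handled by the $\epsilon$-construction and the auxiliary curves. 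In short: your proposal correctly identifies the extra freedom afforded by flat line bundles (arbitrary degree-zero divisors become ``principal'' for a section of some $L\in\Pic^0$), but the two steps you label (i) and (ii) are the entire content of the theorem, and neither is carried out; the paper closes them by degenerating to a special fibre where classical surjectivity is known, an idea absent from your proposal.
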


Next, we will explain the philosophy of proving twisted Hodge $\Dd$ conjecture. First let us introduce the following group. 

\begin{defn}
A twisted Chow group $\zeta \in CH^{k,m} \otimes \mathbb{R}$ is said to be indecomposable if $\underline{r}_{k,m}(\zeta) \neq 0$. 	
\end{defn}

\begin{defn}
We shall define the group $\ul{z}^k(X,1)_P$, of precycles of twisted group defined as follows:

\begin{align*}
  \underline{z}^k(X,1)_P := 
  \left\{
    \mathlarger{\sum}_i (\sigma_i, \|\ \|_{L_i}, V_i) : 
    \begin{aligned}
      \operatorname{codim}_X V_i = k -1,\\
      (L_i/V_i, \|\ \|_{L_i}) \text{ \rm{flat}}, \\
      \sigma \in \operatorname{Rat}^*(L_i), 
    \end{aligned} \ \ \ 
  \right\}
\end{align*}	
\end{defn}

We are interested in the following questions: first, whether we can create, in a systematic way, an indecomposable twisted cycle. Secondly, we are interested in the conjectures stated above.

\begin{rem}
One of the philosophies that we shall adopt in attacking this problem is the following, first approach is that we start with a precycle $\zeta \in z^{k}(X,1)_P$, and we wish to add $\gamma \in \underline{z}^{k}(X,1)_P$ such that $\omega = \zeta + \gamma \in \underline{z}^k(X,1)$, and moreover, $\underline{r}_{k,1}(\omega) \neq 0$. This philosophy will play a critical role in our results that we will start with in the next section. 
\end{rem}

\end{section}

\begin{section}{General smooth projective surfaces}
In this section we will explore results we proved for smooth projective surfaces. We will prove that given a general smooth projective surface, the twisted Hodge-$\Dd$ holds.

\begin{thm}
	Suppose we have a smooth and proper family $\rho : \mathcal{X} 
	\rightarrow \mathcal{S}$ together with $X_t = \rho^{-1}(t)$ with $0 \in \mathcal{S}$ fixed.Consider the real regulator
	$r_{2,1} : \CH^2(X_0,1) \otimes_{\mathbb{Z}} \mathbb{R} \rightarrow H^{1,1}(X_0,\mathbb{R})$.\ Set \ $\lambda_0 = \dim_{\mathbb{R}}(r_{2,1}(\CH^2(X_0,1) \otimes_{\mathbb{Z}} \mathbb{R}))$.Then on a non-empty real analytic Zariski open subset $U$ $\subset S$, $t \in U$ implies $\dim_{\mathbb{R}}(\ul{r_{2,1}}(\ul{\CH}^2(X_t,1) \otimes \mathbb{R})) \geq \lambda_0$.
\end{thm}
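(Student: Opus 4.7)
The plan is to establish a semicontinuity statement for the twisted regulator by spreading cycles from $X_0$ into the family and exploiting the extra flexibility of the twisted framework (flat line bundles) to repair any closure obstructions that appear on nearby fibers. Concretely, I would first choose higher Chow cycles $\xi_1,\dots,\xi_{\lambda_0}\in \CH^2(X_0,1)\otimes\mathbb{R}$ whose real regulator images $r_{2,1}(\xi_j)$ are $\mathbb{R}$-linearly independent in $H^{1,1}(X_0,\mathbb{R})$. Each $\xi_j$ embeds tautologically into $\underline{\CH}^2(X_0,1)\otimes\mathbb{R}$ by equipping the supporting curves with the trivial flat line bundle, and on this image the twisted regulator $\underline{r}_{2,1}$ agrees with $r_{2,1}$. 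So the bound $\dim_\mathbb{R}\underline{r}_{2,1}(\underline{\CH}^2(X_0,1)\otimes\mathbb{R})\geq\lambda_0$ is automatic at $t=0$; the task is to propagate it to generic $t$.

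Next, after shrinking $\mathcal{S}$ around $0$ and possibly passing to a finite base change, I would write each $\xi_j=\sum_i(f_{j,i},V_{j,i})$ and spread: extend each curve $V_{j,i}\subset X_0$ to a flat family $\mathcal{V}_{j,i}\subset\mathcal{X}$ over $\mathcal{S}$, and extend each rational function $f_{j,i}$ to a rational function $F_{j,i}$ on $\mathcal{V}_{j,i}$. Restriction to the fiber yields a precycle $\zeta_j(t)=\sum_i(f_{j,i,t},V_{j,i,t})\in\underline{z}^2(X_t,1)_P$ whose residual divisor $D_j(t):=\sum_i\div(f_{j,i,t})$ is a $0$-cycle on $X_t$ vanishing at $t=0$ but generically nonzero for $t\neq 0$, since cancellations between different curves typically split up under deformation. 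To close $\zeta_j(t)$, I would invoke the philosophy of Remark 1.5: because $D_j(t)$ deforms from zero it is homologically trivial on $X_t$, hence by the classification of flat bundles recalled at the start of the paper, $D_j(t)$ arises as the divisor of a rational section $\tau_j(t)$ of a flat line bundle $L_j(t)$ over an auxiliary curve $M_j(t)\subset X_t$ chosen to connect the split-off zeros and poles. Setting $\eta_j(t):=\zeta_j(t)+(\tau_j(t),\,\|\ \|_{L_j(t)},\,M_j(t))$ yields a bona fide element $\eta_j(t)\in\underline{\CH}^2(X_t,1)\otimes\mathbb{R}$ with $\eta_j(0)$ equal to the image of $\xi_j$.

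With the completed cycles in hand, I would verify that the regulator value $\underline{r}_{2,1}(\eta_j(t))$ varies real analytically in $t$: the spread curves, the rational sections, and the flat metrics on $L_j(t)$ (obtained via the Hodge-theoretic proof that $c_1=0$ forces existence of a flat metric) all depend real analytically on $t$, and this real analyticity propagates through the explicit integral formula for $\underline{r}_{2,1}$ once it is tested against a real analytic family of $d$-closed $(1,1)$-forms. At $t=0$ the correction $\tau_j(0)$ is trivial, so $\underline{r}_{2,1}(\eta_j(0))=r_{2,1}(\xi_j)$, and the $\lambda_0$ values are linearly independent there. Because linear independence of a fixed number of real analytically varying vectors is cut out by the non-vanishing of a single real analytic minor, it persists on a non-empty real analytic Zariski open subset $U\subset\mathcal{S}$, giving $\dim_\mathbb{R}\underline{r}_{2,1}(\underline{\CH}^2(X_t,1)\otimes\mathbb{R})\geq\lambda_0$ for $t\in U$.

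The main obstacle is the twisted completion step in a real analytically varying way. The auxiliary curves $M_j(t)$, the flat line bundles $L_j(t)\in\Pic^0(M_j(t))$, and the flat metrics must all be chosen compatibly in a neighbourhood of $0$, which requires invoking real analytic variation of $\Pic^0$ in the family and arguing that the correction term contributes analytically to the regulator current without disturbing its class modulo $D(2,1)=H^{2}_{\mathrm{alg}}(X_t,\mathbb{Q})\otimes\mathbb{R}$. Once this bookkeeping is carried out — and the precycle-to-cycle completion of Remark 1.5 is the precise reason it can be — the semicontinuity argument becomes essentially formal.
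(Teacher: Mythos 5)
Your overall architecture matches the paper's: fix $\lambda_0$ classes with independent regulator images at $t=0$, push them into nearby fibers, repair the broken divisor condition with rational sections of flat line bundles on auxiliary curves through the split-off zeros and poles, and conclude by continuity of a determinant. The completion step (degree-zero $0$-cycles on curves are divisors of sections of flat bundles) and the final minor/semicontinuity argument are essentially the ones the paper uses.

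The genuine gap is the spreading step. You assert that each supporting curve $V_{j,i}\subset X_0$ ``extends to a flat family $\mathcal{V}_{j,i}\subset\mathcal{X}$'' and that each $f_{j,i}$ extends over it. For surfaces this is exactly what fails: by Noether--Lefschetz, for $t$ outside the jumping locus the class $[V_{j,i}]$ is no longer of type $(1,1)$ on $X_t$, so the curve does not deform at all --- and the theorem is precisely a statement about such general $t$. The paper's proof is built around circumventing this: it chooses $m\gg 0$ so that $mH_0 \sim_{\mathrm{rat}} D+D'$ with $D'$ effective, realizes $D+D'$ as a hypersurface section $V(h)\cap X_0$ (the hyperplane class always deforms, so $D_t=V(h)\cap X_t$ makes sense for all $t$), and then replaces $f$ on $D$ by the globally defined $h_\epsilon = 1+\frac{(f-1)k}{k+\epsilon}\in\CC(\mathcal{X})^{\times}$, which restricts to approximately $f$ on $D$, to $1$ on $D'$, and to every fiber $X_t$. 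Without some such device, neither the curve nor the function propagates to general $t$, and the remainder of your argument has nothing to act on. A secondary, related point: even when a curve does deform, a rational function on the central curve need not extend to the deformed family; the $\epsilon$-construction handles this as well. The rest of your proposal (auxiliary curves through the separated points, flat metrics normalized so the corrections contribute nothing in the limit, real analytic persistence of linear independence) is consistent with the paper once this obstruction is removed.
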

\end{section}

$\newline$
\begin{proof}

Choose $\{\zeta_1,\ldots, \zeta_{\lambda_0}\} \in {\CH}^2(X_0,1)$ and consider their regulator images $\{r(\zeta_1), \ldots,r(\zeta_{\lambda_0})\}$. Consider any $\zeta_M$, denote it by $\zeta$. Then it follows that 
$\zeta = \Sigma_j^{N_0} (f_j,D_j)$, where $\Sigma_j \Div_{D_j}(f_j) = 0$. We look at the following dictionary $\zeta = \Sigma_j^{N_0} (f_j,D_j)$ corresponds to $(f,D) = (f_j,D_j)$.

Let $H = \textrm{hyperplane section of } \mathcal{X} \subset \mathcal{\bar{X}} \subset \mathbb{P}^N$ for some $N$. Note that $\mathcal{\bar{X}}$ is smooth projective, as we can always apply a sequence of blowups.  Define $H_t = H \cap X_t$. Choose  $m$ big enough such that $mH_0 - D {\sim}_{\mathrm{rat}} D^{\prime}$, for $D^{\prime}$ effective. This follows from $m >> 1$, $mH_0 - D$ is very ample (see \cite{GH}). Hence there exists a regular section $u$ of $mH_0 - D$. It follows $mH_0 \sim_{\mathrm{rat}} D + D^{\prime}$. We may assume that $D^{\prime}$ has no components in common with $D$ by Chow's moving lemma. For $m >> 1$ we have that $mH_0$ is a complete linear system, it follows that one can find a hypersurface $V(h) \cap X_0$ such that $V(h) \cap X_0 = D + D^{\prime}$. $V(h) \cap X_0$ has a deformation in $\mathcal{S}$. Set $D_t = V(h) \cap X_t$, where $D_0 = D + D^{\prime}$. $D_0$ deforms in $\mathcal{S}$. We can lift $h$ to an element $f \in \mathbb{C}(\mathbb{P}^N)^{\times}$. Let $(g,D)$ be given such that $g \in \mathbb{C}(D)^{\times}$.

 We can choose rational function $k \in \mathbb{C}(X_0)^{\times}$ such that $k$ when restricted to $D$ is non-zero when restricted to each irreducible component of $D$, and $k$ restricted on $D^{\prime}$ is zero. We introduce the following $\epsilon$ construction: $$\biggl(h_{\epsilon} = 1 + \frac{(f - 1)k}{k + \epsilon}\biggl) \in  \mathbb{C}(\mathcal{X})^{\times}$$

Note that $\zeta$ under the dictionary corresponds to $\newline$$(h_{\epsilon},D_0 )= (h_{\epsilon}, D + D^{\prime})$. Note that $(h_{\epsilon},D + D^{\prime}) = (1, D^{\prime}) + (h_{\epsilon},D)$ and as $\epsilon \mapsto 0$ we have $(h_{\epsilon},D) \mapsto (f,D)$.

$\newline$Put $h_{\epsilon,t} = h_{\epsilon}|_{X_t}$ and consider a deformation, $(h_{\epsilon,t},D_t)$ with $D_{t_0} = D + D^{\prime}$. On $mH_t$ we have the following:
$$\Div(h_{\epsilon,t}^j) = \Sigma_{i = 1}^M (p_i(t)^j - q_i^j(t))$$

Remark:

$$\Sigma_{j = 1}^N \Div_{D_t}(h_{\epsilon,t}^j) \mapsto 0 \ \textrm{as } (t,\epsilon) \mapsto (0,0).$$

By intersecting $H = H_{\bar{\mathcal{X}}} $ with $X_t$ we can find curves $C_{i,t}^{j} \subset X_t$ such that

$$p_i^j(t) - q_i^{j}(t) \in C_{i,t}^{j} \textrm{ for } j = 1,\ldots, N \textrm{ such that}$$

$\newline$$C_{i,t}^j$ is smooth along $p_i^j(t)$ and $q_i^j(t)$.   This follows from Bertini's theorem together with dimension type argument on the dual projective space. Bertini's theorem tells us generic elements are smooth away from the base locus. On the base locus we have a dimension type argument. For completeness we include this dimension type argument. Consider a projective embedding $X \hookrightarrow \mathbb{P}^N$. We will assume minimality, i.e. we will assume $N$ is minimal such that $X \not \subset \mathbb{P}^{N - 1}$. Since $X$ is a surface it follows that $\mathbb{P}^{N - 1} \cap X$ is a curve. For any point $q \in X$, and $q \in \mathbb{P}^{N - 1}$, $\mathbb{P}^{N - 1} \cap X$ is singular at q $\iff$ $\overline{T_p(X)} \cong \mathbb{P}^2 \subset \mathbb{P}^{N - 1}$. Since $\overline{T_p(X)}$ is determined by $3$ distinct non-collinear points. Thus $\{t \in \mathbb{P}^{N, *} : \overline{T_p(X)} \subset \mathbb{P}_t^{N - 1}\}\cong \mathbb{P}^{N - 3}$. On the other hand, $\{ t \in \mathbb{P}^{N,*} : p,q \in \mathbb{P}_t^{N - 1} \}$ is of codimension 2 in $\mathbb{P}^{N, *}$. Thus by a dimension count, one can find $t \in \mathbb{P}^{N,*}$ such that $C := \mathbb{P}_t^{N - 1} \cap X$ is smooth at $p,q \in C$ as well as the full base locus of points. Bertini's theorem tells us that $C$ is connected and smooth outside the base locus. Since $C$ is already smooth at base locus, it is therefore smooth and connected, hence irreducible too. Construct a Cartier divisor $\beta_{t,\epsilon}^j$ on the curve $C_{t,\epsilon}^j$ such that 
$$\Div_{C_{t,\epsilon}^j}(\beta_{t,\epsilon}^j) = p_j(t,\epsilon) - q_j(t,\epsilon).$$

$\newline$The line bundles $L_j$ associated to $(\beta_{t,\epsilon}^j)$ are flat over $C_{t,\epsilon}^j$. Finally to each cycle $\zeta_1,\ldots, \zeta_{\lambda_{0}}$ we consider the real regulator $r(\zeta_1),\ldots, r(\zeta_{\lambda_{0}})$, and we deduced that we can complete those to twisted cycles. Finally if we check for linear independence, then we are done. But this follows as we can arrange for $\|\beta_{t,\epsilon}^j \|_L \mapsto 1$ as $(\epsilon,t)^j \mapsto (0,t_0) \in \mathbb{R}\times \mathbb{P}^{N,*}$,  as we have that the real regulator images of $\{r(\zeta_1), \ldots,r(\zeta_{\lambda_0})\}$ are linearly independent. A determinant calculation is a continuous function and being non-zero at a point implies it is non-zero in a neighbourhood of that point.
\end{proof}

\begin{cor}
	Twisted Hodge D conjecture holds for general smooth projective surfaces
\end{cor}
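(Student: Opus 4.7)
The plan is to derive the corollary directly from Theorem 2.1 via a single specialization argument. To establish the twisted Hodge $\mathcal{D}$ conjecture at a general smooth projective surface $X$, I would realize $X = X_t$ as a general fiber of a smooth proper family $\rho : \mathcal{X} \to \mathcal{S}$, and choose the central fiber $X_0 = \rho^{-1}(0)$ to be a specific surface for which the classical (untwisted) real regulator
$$r_{2,1} : \CH^2(X_0,1)\otimes_\ZZ \RR \;\longrightarrow\; H^{1,1}(X_0,\RR)$$
is surjective onto the full target. Natural candidates for such $X_0$ are self-products $E\times E$ of a CM elliptic curve, products of modular curves, or Kummer K3 surfaces arising from these: the decomposable part $N^1H^2(X_0,\RR)$ is reached tautologically by symbols $(c,D)$ with $c\in\CC^\times$ and $D$ a divisor, while the transcendental $(1,1)$ part is hit by explicit indecomposable $K_2$-classes of the type constructed in Section 5.

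With $X_0$ so chosen, Theorem 2.1 produces a real-analytic Zariski open subset $U\subset\mathcal{S}$ on which
$$\dim_\RR \underline{r}_{2,1}\bigl(\underline{\CH}^2(X_t,1)\otimes\RR\bigr) \;\geq\; \lambda_0 \;=\; h^{1,1}(X_0).$$
Because $\mathcal{X}\to\mathcal{S}$ is smooth and proper, the Hodge number $h^{1,1}$ is constant along the fibers, so the inequality is actually an equality and the twisted regulator at $X_t$ is already surjective onto the whole of $H^{1,1}(X_t,\RR)$. A fortiori it surjects onto the quotient
$$\frac{\{H^{1,1}(X_t)\oplus H^{1,1}(X_t)\}\cap H^{2}(X_t,\RR)}{D(2,1)},$$
which is precisely the twisted Hodge $\mathcal{D}$ conjecture in codimension $k=2$, $m=1$. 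Since every smooth projective surface of a fixed deformation type can be placed as a fiber of such a family, this delivers the statement for general $X$.

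The serious step is producing the auxiliary $X_0$ on which $r_{2,1}$ is surjective onto all of $H^{1,1}(X_0,\RR)$, which is just the classical Hodge $\mathcal{D}$ conjecture for $X_0$. The decomposable part is automatic, but spanning the transcendental $(1,1)$ classes requires an explicit supply of indecomposable $K_2$-symbols on $X_0$ together with a linear-independence check of their real-regulator images. Once that (classical but nontrivial) ingredient is in place, the remainder of the proof is just the lower-semicontinuity of $\dim \underline{r}_{2,1}$ packaged in Theorem 2.1 combined with the constancy of $h^{1,1}$ in a smooth family.
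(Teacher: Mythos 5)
Your argument is essentially the paper's: specialize to a central fiber $X_0$ where the classical Hodge $\mathcal{D}$ conjecture holds, invoke Theorem 2.1 for the lower bound $\dim_\RR \underline{r}_{2,1}(\underline{\CH}^2(X_t,1)\otimes\RR)\geq\lambda_0$ on a real-analytic Zariski open set, and conclude surjectivity since $\lambda_0=h^{1,1}$ is constant in the smooth family. The only difference is that the paper takes $X_0$ of \emph{maximal Picard rank}, so that $H^{1,1}(X_0,\RR)$ is entirely algebraic and is spanned by decomposable classes $(c,D)$ alone --- which makes the ``serious step'' you flag (producing indecomposable classes to hit the transcendental $(1,1)$ part) vacuous, since there is no transcendental $(1,1)$ part; your candidate central fibers (CM self-products, Kummer K3s) are in fact such maximal-Picard-rank surfaces.
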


\begin{proof}
	We know the Hodge D conjecture holds for surfaces of maximum Picard rank. Since we showed that general smooth project surface, we have $\dim_{\mathbb{R}}(\ul{r_{2,1}}(\ul{\CH}^2(X_t,1) \otimes \mathbb{R})) \geq \lambda_0$, so surjectivity follows. 
\end{proof}

\begin{rem}
	We note the following difference between higher Chow cycles and twisted cycles: For sufficiently general curves $C_1$ and $C_2$, with $g(C_1)g(C_2) \geq 2$, the image of the real regulator is zero. However, according to Corollary 2.2, the image of the twisted regulator isn't zero. In the next couple of pages, we explicitly construct indecomposables for the product of curves, and we show surjectivity explicitly for the product of elliptic curves.
\end{rem}

\begin{lem}
Given $X = C_1 \times C_2$, $C_j$ smooth curve, $D \subset X$ irreducible curve $f \in \mathbb{C}(D)^{\times}$, then $(f,D)$ can be completed to twisted algebraic cycle $\zeta$ such that:

$$\ul{r}_{2,1}(\zeta)(\omega) = \int_{D} \log |f| \omega$$	
\end{lem}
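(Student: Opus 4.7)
The plan is to extend the construction from Section 5.3 to an arbitrary $f \in \CC(D)^{\times}$. I would begin by writing
$$\div(f)_D = \sum_{i=1}^m \bigl[(p_i, q_i) - (s_i, t_i)\bigr]$$
as a zero-cycle on $D$ and then use the elementary identity
$$(p_i, q_i) - (s_i, t_i) = \bigl[(p_i, q_i) - (p_i, t_i)\bigr] + \bigl[(p_i, t_i) - (s_i, t_i)\bigr]$$
to split each point-difference into a degree-zero cycle supported on the vertical fiber $D_i = \{p_i\}\times C_2$ and one supported on the horizontal fiber $K_i = C_1 \times \{t_i\}$.

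Since a degree-zero divisor on a smooth projective curve lies in $\Pic^0$ and hence corresponds to a flat line bundle with a rational section, I would then produce flat line bundles $L_i/D_i$ and $M_i/K_i$ together with rational sections $\sigma_i \in \Rat^*(L_i)$, $\eta_i \in \Rat^*(M_i)$ satisfying
$$\div(\sigma_i) = (p_i, t_i) - (p_i, q_i), \qquad \div(\eta_i) = (s_i, t_i) - (p_i, t_i).$$
Setting
$$\zeta := (f, D) + \sum_{i=1}^m (\sigma_i, D_i) + \sum_{i=1}^m (\eta_i, K_i),$$
a direct telescoping shows that $\sum \div(\cdot) = 0$ in $X$, so $\zeta \in \underline{z}^2(X,1)$.

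For the regulator identity I would invoke the fact that the relevant $(1,1)$ test classes on $X = C_1 \times C_2$ can be represented by forms in $H^1(C_1,\RR)\otimes H^1(C_2,\RR)\cap H^{1,1}(X,\RR)$, which pull back trivially to any vertical or horizontal fiber curve because one of the two tensor factors restricts to a point. Consequently the contributions $\int_{D_i}\log\|\sigma_i\|_{L_i}\cdot\omega|_{D_i}$ and $\int_{K_i}\log\|\eta_i\|_{M_i}\cdot\omega|_{K_i}$ vanish, leaving only the $(f,D)$ term, and one obtains $\underline{r}_{2,1}(\zeta)(\omega) = \int_D \log|f|\,\omega$.

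The main obstacle I anticipate is the flat-bundle step: while the existence of $\sigma_i$ and $\eta_i$ is immediate from the identification of $\Pic^0$ of a curve with degree-zero divisor classes, one must work at the Cartier-divisor level (as in Section 5.3) so that $\div(\sigma_i)$ and $\div(\eta_i)$ equal the required zero-cycles on the nose and not merely up to a rational function. This rigidity is precisely what forces the use of flat line bundles rather than ordinary rational functions, and it is exactly the structural advantage of twisted cycles that the rest of the paper exploits.
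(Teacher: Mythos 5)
Your proof is correct and follows essentially the same strategy as the paper: decompose $\div(f)_D$ into degree-zero pieces supported on horizontal and vertical fiber curves, realize each piece as the divisor of a rational section of a flat line bundle on that fiber (this is exactly the Cartier-divisor rigidity you flag), and observe that the transcendental $(1,1)$ test forms vanish on the fibers so only the $(f,D)$ term contributes to the regulator. The only cosmetic difference is bookkeeping: you telescope pairwise as in \S 5.3, producing $m$ vertical and $m$ horizontal fibers, while the paper fixes a single base point $p\in C_1$, first corrects along the fibers $C_1\times\{q_i\}$ and then absorbs the residue $(p,\sum n_i q_i)$ into the single vertical fiber $\{p\}\times C_2$ using $\sum n_i=0$.
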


\begin{proof}
We have $\Div(f) = \Sigma n_i(p_i,q_i)$ with $\Sigma n_i = 0$. Set $D_i = C_1 \times \{q_i\}$. Fix $p \in C_1$ such that $p_i - p \sim_{\rm{alg}} 0$ on $C_1$. It follows $p_i - p \in \CH^1_{\rm{hom}}(C_1)$. It follows that there exists a flat line bundle $L_i^{\otimes n_i}$ on $C_1$ and $\sigma_i \in \operatorname{Rat}^*(L_i)$ such that $\Div(\sigma_i) = (p_i - p)n_i$.

Consider $\Div_D(f) - \Sigma_i \Div_{D_i} (\sigma_i,D_i) = (p,\Sigma n_i q_i)$. For $\omega \in H^2_{\rm{tr}}(C_1 \times C_2)$ we have $\omega|_{D_i} = 0$ by Kunneth's formula. It follows that $\Sigma n_i q_i \sim_{\rm{alg}} 0$ on $C_2$. Set $D_0 = \{p\} \times C_2$. By the same reasoning as above we have flat line bundle $L$ on $C_2$ such that $\Div_{D} \sigma = (p, \Sigma n_i q_i)$. Therefore $(f,D) - \Sigma_i (\sigma_i,D_i) - (\sigma,D) \in \underline{\CH}^2(X,1)$. We have $\omega|D = 0$. Hence it follows that
$$ \int_D \omega \log|f| + \int_{D} \omega \log\|\sigma_0\| + \mathlarger{\sum_i} \int_{D_i} \omega \log\|\sigma_i\| = \int_D \omega \log|f|$$
\end{proof}

\begin{thm}
Let $X = E_1 \times E_2$, be product of smooth elliptic curves, given by the Weierstrass equations by first setting for $j= 1,2$ 
$$F_j = y_j^2 - x_j^3 + b_jx_j + c_j \ and \ X = V(\bar{F_1},\bar{F_2}) \simeq E_1 \times E_2$$
 then the Hodge D conjecture and twisted Hodge D conjecture holds, i.e. ${r}_{2,1} : {\CH}^2(X,1) \otimes_{\mathbb{Z}} \mathbb{R} \rightarrow H^{1,1}(X,\mathbb{R})$ and $\ul{r}_{2,1} : \ul{\CH}^2(X,1) \otimes_{\mathbb{Z}} \mathbb{R} \rightarrow H^{1,1}(X,\mathbb{R})$ are surjective.
\end{thm}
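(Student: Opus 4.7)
The plan is a deformation argument using Theorem 2.1, with base point a member of the Weierstrass family having maximum Picard rank. I would view $X = E_1 \times E_2$ as the fiber $X_{t^*}$ of the smooth projective family
\[
\rho : \mathcal{X} \to S, \qquad X_t = E_{1,t} \times E_{2,t},
\]
over the Zariski open locus $S \subset \mathbb{A}^4$ of nondegenerate Weierstrass data $t = (b_1, c_1, b_2, c_2)$. Choose $t_0 \in S$ so that $E_{1,t_0} \cong E_{2,t_0} \cong E$, where $E$ is an elliptic curve with complex multiplication --- for definiteness $E : y^2 = x^3 - x$, which has CM by $\mathbb{Z}[i]$. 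At $t_0$, the fiber $X_{t_0} = E \times E$ has Picard number $2 + \dim_{\mathbb{Q}} \operatorname{End}(E)_{\mathbb{Q}} = 4 = h^{1,1}(X_{t_0})$, the maximum possible. The classical Hodge $\Dd$ conjecture holds for every surface of maximum Picard rank (this is the result invoked in the proof of Corollary 2.2), so $\lambda_0 := \dim_{\mathbb{R}} r_{2,1}\bigl(\CH^2(X_{t_0}, 1) \otimes \mathbb{R}\bigr) = 4$.

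Theorem 2.1 then produces a nonempty real-analytic Zariski open $U \subset S$ containing $t_0$ with
\[
\dim_{\mathbb{R}} \underline{r}_{2,1}\bigl(\underline{\CH}^2(X_t, 1) \otimes \mathbb{R}\bigr) \;\geq\; \lambda_0 \;=\; 4 \;=\; h^{1,1}(X_t) \qquad (t \in U),
\]
forcing $\underline{r}_{2,1}$ onto all of $H^{1,1}(X_t, \mathbb{R})$ for $t \in U$. To hit an arbitrary target $t^* \in S$, I use that CM base points are Euclidean-dense in $S$ (imaginary quadratic irrationals are dense in $\mathbb{H}$, so CM $j$-invariants are dense in $\mathbb{C}$); varying $t_0$ through CM points and taking the union of the corresponding open sets $U$ covers all of $S$, because the complementary locus $S \setminus U$ is a proper real-analytic subvariety that moves nontrivially as $t_0$ varies. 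This establishes the twisted Hodge $\Dd$ conjecture for every product of elliptic curves in Weierstrass form.

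The classical statement at $t_0$ is already built into the choice of base point. For a non-CM target $t^*$, I would construct classical higher Chow cycles directly, following the product-of-curves template behind Lemma 4.3: take $D \subset X$ as hyperplane sections under the Segre embedding $X \hookrightarrow \mathbb{P}^2 \times \mathbb{P}^2 \hookrightarrow \mathbb{P}^8$, together with rational functions $f \in \mathbb{C}(D)^{\times}$ built from the Weierstrass coordinates $x_j, y_j$, arranged so that the global residue condition $\sum \operatorname{div}(f_i) = 0$ is met. The regulator values $\omega \mapsto \int_D \log|f|\, \omega$ then span the two-dimensional transcendental part of $H^{1,1}(X, \mathbb{R})$ modulo algebraic cycles, which together with the algebraic $(1,1)$-classes $[E_1 \times \mathrm{pt}]$ and $[\mathrm{pt} \times E_2]$ gives surjectivity of $r_{2,1}$.

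The main obstacle is the classical side for non-isogenous $E_1, E_2$: as discussed after $(5.4)$ and in the bracketed remark in Section $5.3$, closing a classical precycle $(f, D)$ into a higher Chow cycle demands rational functions $h_j$ on auxiliary curves with divisor $N(p_j - p)$ for some integer $N \neq 0$, which in general forces torsion of $p_j - p$ in the Jacobian and is intertwined with the Mumford--Manin conjecture. The twisted setting sidesteps this entirely because every degree-zero divisor on a curve is the divisor of a rational section of some flat line bundle; this is precisely why the twisted half of the theorem falls out of Theorem 2.1 automatically, whereas the classical half requires the explicit Weierstrass construction above.
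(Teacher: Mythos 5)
Your route is genuinely different from the paper's. The paper proves this theorem by brute force: it takes the explicit curve $D=\{x_1x_2+y_1y_2=0\}$ and the functions $f_1=x_1^2x_2+i$, $f_2=x_1^2x_2+1$, evaluates the $2\times 2$ matrix of periods $\int_D\eta_j\log|f_k|$ by degenerating the Weierstrass data first to $(b_1,0,b_2,0)$ and then to the cuspidal point $(0,0,0,0)$, where the integral collapses to an explicit sign-definite integral over $\mathbb{C}$ (resp.\ an integrand that is identically zero for the off-diagonal entries), and then completes each $(f_i,D)$ to a twisted cycle via the product-of-curves completion lemma. Your idea of instead feeding a maximal-Picard-rank CM fiber into Theorem 2.1 is attractive, and the core mechanism is sound: at $E\times E$ with CM one has $\rho=h^{1,1}=4$, decomposable cycles already give $\lambda_0=4$, and Theorem 2.1 then forces surjectivity of $\underline{r}_{2,1}$ on a nonempty real-analytic Zariski open set of the Weierstrass parameter space. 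Where it works, this avoids all explicit period computations.

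There are, however, two genuine gaps. First, the passage from ``a nonempty open $U(t_0)$ for each CM base point $t_0$'' to ``every $t^*\in S$'' does not go through as stated. Density of CM points gives CM base points arbitrarily close to $t^*$, but $U(t_0)$ is only guaranteed to contain some punctured neighborhood of $t_0$ whose size you do not control, and nothing prevents a fixed $t^*$ from lying in the complement of every $U(t_0)$. Your assertion that the bad locus ``moves nontrivially as $t_0$ varies'' is precisely what needs proof: that locus is the vanishing set of a determinant built from the deformed cycles constructed inside the proof of Theorem 2.1, and you have no a priori control over it. As written, your argument establishes the twisted statement only for $t$ in a dense open subset, i.e.\ for a \emph{general} product of elliptic curves. (The paper's own continuity-from-the-cusp argument has the same character, but your proposal leans on the covering claim as the load-bearing step, so it must be justified or the conclusion weakened to ``general''.) Second, the classical half of the theorem, surjectivity of $r_{2,1}$ itself, is not established for non-CM fibers: you sketch a construction and then correctly identify the torsion-point/Mumford--Manin obstruction to closing a classical precycle $(f,D)$, but you do not resolve it. If you intend to claim the full statement, you need either to verify that the relevant degree-zero divisors are torsion for your chosen $D$ and $f$, or to invoke the Chen--Lewis results for abelian surfaces; leaving the obstruction on the table leaves that half of the theorem unproved.
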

\begin{proof}

Let $D = \{x_1x_2+y_1y_2 = 0\}$ and consider $f_1 = x_1^2x_2+\I$ and $f_2 = x_1^2x_2 + 1$. Set $\omega_i = \frac{dx_i}{y_i}$,  $\eta_1 = \omega_1 \wedge \bar{\omega_2} + \bar{\omega_1} \wedge \omega_2$, and $\eta_2 = \I(\omega_1 \wedge \bar{\omega_2} - \bar{\omega_1}\wedge \omega_2)$. We will prove that $\int_D \eta_1 \log|f_1| \neq 0$ and $\int_D \eta_2 \log|f_2| \neq 0$. Also, we have $\int_D \eta_1 \log|f_2| = \int_D \eta_2\log|f_1| = 0$.

We will prove the above using two degeneration argument. It is clear that X varies with $t = (b_1,c_1,b_2,c_2)$. Set $D_t = X \cap V(x_1x_2 + y_1y_2)$. We will consider two deformations first deforming $t = (b_1,c_1,b_2,c_2)$ to $(b_1,0,b_2,0)$ then to $(0,0,0,0)$. On $t = (b_1,0,b_2,0)$ we have that the equations of $E_j$ becomes:

$$y_j^2 = x_j^3 + b_jx_j.$$

On $D_t$ we have we have the following equations $x_1^2x_2^2 = y_1^2y_2^2 = x_1x_2(x_1^2 + b_1)(x_2^2 + b_2)$. Therefore $D_t$ can be decomposed as 
$$D_t = E_1 \times [1,0,0] + [1,0,0]\times E_2 + D_t^{\prime}$$

Because on $D_t^{\prime}$ we have $x_1x_2 \neq 0$ by cancelling out $x_1x_2$ we have the following equations $x_1x_2 = (x_1^2 + b_1)(x_2^2 + b_2)$ on $D_t^{\prime}$. Moreover, on $E_1 \times [1,0,0]$ and $[1,0,0] \times E_2$ the integral equates to zero. Therefore we have 
$$ \int_{D_t} \eta_i \log|f_i| = \int_{D_t^{\prime}} \eta_i \log|f_i|$$

Now deforming $(b_1,0,b_2,0)$ to $(0,0,0,0)$ we have that the equations of $E_1$ and $E_2$ becomes
$$y_j^2 = x_j^3$$

Therefore we have the following equation:

$$(x_1x_2)^2 = (y_1y_2)^2 = y_1^2y_2^2 = x_1^3x_2^3$$

If $x_1x_2 = 0$ it follows that $\log|f_i| = 0$. Therefore we can assume that $x_1x_2 \neq 0$. It follows that from the equations 

$$(x_1x_2)^2 = (y_1y_2)^2 = y_1^2y_2^2 = x_1^3x_2^3$$

that $x_1x_2 = 1$. Therefore $y_1^2y_2^2 = 1 \implies (y_1y_2)^2 = 1$. It follows that $y_1y_2 = 1$ or $y_1y_2 = -1$. If $y_1y_2 = 1$ then it follows that $x_1x_2 + y_1y_2 = 0 \implies y_1y_2 = -1$ this gives us a contradiction. Therefore $y_1y_2 = -1$. Hence the equations we have the following decomposition of $D_t^{\prime}$

$$D_t^{\prime} = E_1 \times [1,0,0] + [1,0,0]\times E_2 + D_t^{\prime \prime}$$

Where $D_t^{\prime \prime} = X \cap V(x_1x_2 - 1)$. Recall we are working under the assumption that $x_2 = x_1^{-1} \implies dx_2 = -x_1^{-2}dx_1$. Recall

 $$\eta_1 = \frac{dx_1}{y_1} \wedge {\frac{d\overline{x}_2}{\overline{y}_2}}+ {\frac{d\overline{x}_1}{\overline{y}_1}}\wedge \frac{dx_2}{y_2}$$
 Plugging in $x_2 = x_1^{-1}$,$dx_2 = -x_1^{-2}dx_1$, and $y_2 = -y_1^{-1}$ into the calculation below with the elliptic curve equations $y_i = x_i^3$ we get the following computations:

 \begin{equation} \nonumber
\begin{split}
\frac{d \overline{x}_1}{\overline{y}_1} \wedge \frac{dx_2}{y_2} = \overline{
\frac{\overline{x}_1}{|x_1|^3} dx_1 \wedge d\overline{x}_1}  \\
	= \frac{x_1}{|x_1|^3}d\overline{x}_1 \wedge dx_1 \\
	= \frac{-x_1}{|x_1|^3}dx_1 \wedge d\overline{x}_1 
\end{split}
\end{equation}

$$\therefore \eta_1 = \frac{\overline{x}_1 - x_1}{|x_1|^3} dx_1 \wedge d \overline{x}_1 = \frac{-2{\rm{Im}}(x_1)}{|x_1|^3} dx_1 \wedge d \overline{x}_1$$

Now using the same computations for $\eta_2$ we get the following:

$$\therefore \eta_2 = \frac{(\overline{x}_1 + x_1)}{|x_1|^3} dx_1 \wedge d \overline{x}_1 = \frac{2{\rm{Re}}(x_1)}{|x_1|^3} dx_1 \wedge d \overline{x}_1$$

We first prove that $\int_D \eta_1 \log|f_1| \neq 0$. We have two degeneration argument. By the degeneration argument we are really integrating over $\mathbb{C}$. Therefore $\int_D \eta_1 \log |f_1| \rightarrow \int_{\mathbb{C}} \log|x_1 + \I| \frac{-2{\rm{Im}}(x_1)}{|x_1|^3} dx_1 \wedge  d \overline {x}_1$. We will not worry about the negative sign in front of the integral as it doesn't contribute to the integral being zero. The integral over the upper half-upper $\mathbb{C}$ is the same as the integral over the lower half plane. Therefore it suffices to look at the integral  $\int_{\mathbb{H}} \log|\frac{x_1 + \I}{\I + \overline{x}_1}| \frac{2{\rm{Im}}(x_1)}{|x_1|^3} dx_1 \wedge d\overline {x}_1$. Over the upper half plane $\frac{{\rm{Im}}(x_1)}{|x_1|^3} > 0$. We have $ \log|\frac{x_1 + \I}{\I + \overline{x}_1}| > 0 \iff |\frac{\I + x_1}{\I + \overline{x}_1}| > 1 \iff {\rm{Im}}(x_1) > 0$. Hence $\int_{\mathbb{H}} \log|\frac{x_1 + \I}{\I + \overline{x}_1}| \frac{2{\rm{Im}}(x_1)}{|x_1|^3} dx_1 \wedge d\overline {x}_1 > 0 \implies \int_{\mathbb{C}} \log|\frac{x_1 + \I}{\I + \overline{x}_1}| \frac{2{\rm{Im}}(x_1)}{|x_1|^3} dx_1 \wedge d\overline {x}_1 > 0.$ On the other hand if we look at $\int_{\mathbb{H}} \log|\frac{1 + x_1}{1 + \overline{x}_1}| \frac{2{\rm{Im}}(x_1)}{|x_1|^3} dx_1 \wedge d\overline {x}_1$, then $|\frac{1 + x_1}{1 + \overline{x}_1}| = |\frac{1 + x_1}{(\overline{1 + x_1})}| = 1$. Therefore the integral vanishes.  By a symmetric argument we have $\int_D \eta_2 \log|f_1| = 0$ and $\int_D \eta_2 \log|f_2| \neq 0$.

From lemma 4.2 it follows $(f_i,D)$ can be completed towards twisted cycle $\zeta_i$ such that $\ul{r}_{2,1}(\zeta_i)(\omega) = \int_D \omega \log |f_i|$. Since 
$$\det\begin{bmatrix}
    \int_D \eta_1 \log|f_1|       & \int_D \eta_1 \log|f_2| \\
    \int_D \eta_2 \log|f_1|       & \int_D \eta_2 \log|f_2|
\end{bmatrix} \neq 0$$

Hence it follows that that $\ul{r}_{2,1}(\zeta_1)$ and $\ul{r}_{2,1}(\zeta_2)$ provides a basis for $\mathbb{R}^2$. Since $H^{1,1}(X,\mathbb{R}) \simeq \mathbb{R}^2$ therefore $\ul{r}_{2,1}$ is surjective. 	
\end{proof}

\begin{cor}
The twisted Hodge $\Dd$ conjecture is true for Elliptic fourfolds.	
\end{cor}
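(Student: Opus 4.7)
The plan is to bootstrap from Theorem 2.3 via a product model, together with the semicontinuity/deformation mechanism already used in the proof of Theorem 2.1. Model a general elliptic fourfold by specializing to $X_0 = S_1 \times S_2$ with each $S_j = E_j \times E_j'$ a product of two elliptic curves; this is smooth projective of dimension four. Since Theorem 2.1 shows that $\dim_{\mathbb{R}}(\operatorname{Image}(\underline{r}_{k,1}))$ is lower semicontinuous along smooth proper families, it suffices to establish the twisted Hodge $\Dd$ conjecture at $X_0$: the conclusion will then propagate to a non-empty real analytic Zariski open set of moduli, which is the meaning of ``general'' elliptic fourfold.

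The Künneth formula splits the target as
$$H^{k-1,k-1}(X_0,\mathbb{R})\cap H^{2k-2}(X_0,\mathbb{R})\;=\;\bigoplus_{p+r=k-1,\,q+s=k-1}\Bigl(H^{p,q}(S_1)\otimes H^{r,s}(S_2)\Bigr)_{\mathbb{R}},$$
and I would produce twisted cycles hitting each summand separately. For pure summands $H^{1,1}(S_j)\otimes H^{k-2,k-2}(S_{3-j})$, apply Theorem 2.3 to obtain indecomposable twisted cycles $\zeta_1,\zeta_2$ on $S_j$ and form external products $\zeta_i\boxtimes\gamma$ with $\gamma$ an algebraic cycle of appropriate codimension on $S_{3-j}$. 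By the regulator formula of Proposition 2.4, $\underline{r}_{k,1}(\zeta_i\boxtimes\gamma)$ evaluated on $\omega$ is an integral along $Z_{\zeta_i}\times\gamma$ of $\log\|\sigma_i\|$ against $\omega$; after Künneth decomposition this reduces to $\underline{r}_{2,1}(\zeta_i)$ paired against the $S_j$-component of $\omega$, and by Theorem 2.3 these classes span the required Künneth piece.

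For the genuinely mixed Künneth components, e.g.\ $H^1(S_1)\otimes H^1(S_2)$ sitting inside $H^{k-1,k-1}(X_0)$, I would reproduce the explicit computation of Theorem 2.3 across factors: pair an elliptic curve factor $E_1\subset S_1$ with $E_3\subset S_2$, use the divisor $D \subset E_1\times E_3 \subset X_0$ cut out by $x_1 x_3+y_1 y_3=0$, and the pair of rational functions $f_1 = x_1^2 x_3 + i$, $f_2 = x_1^2 x_3 + 1$ on $D$. Lemma 2.4 applied factor-by-factor completes each $(f_\ell, D)$ to a genuine element of $\underline{z}^{k}(X_0,1)$ by absorbing the stray divisors into rational sections of flat bundles on auxiliary product-of-curves subvarieties of $X_0$, while leaving the regulator integral $\int_D \omega\log|f_\ell|$ unchanged.

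The main obstacle is checking that the two-step degeneration calculation of Theorem 2.3 still produces a non-degenerate matrix of regulator pairings in the fourfold setting: after degenerating $(b_j,c_j)\to 0$ on each elliptic factor in turn, one is left with integrals of the form $\int_{\mathbb{C}^{k-2}} \log|x_1+i|\cdot\tfrac{-2\operatorname{Im}(x_1)}{|x_1|^3}dx_1\wedge d\bar x_1\wedge(\text{algebraic factor})$ paired against a Künneth basis dual to the selected twisted cycles. Granted non-degeneracy, a determinant calculation, identical in spirit to that at the end of Theorem 2.3 and continuous in the family parameter, produces linear independence of the $\underline{r}_{k,1}(\zeta_i\boxtimes\gamma)$ and of the mixed cycles; combined with Hodge symmetry this gives surjectivity onto the full target modulo $D(k,1)$, hence the corollary.
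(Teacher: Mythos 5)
There is a mismatch between what you are proving and what the paper is proving here. Despite the word ``fourfolds'' in the statement, this corollary sits immediately after the theorem on $X=V(\bar F_1,\bar F_2)\simeq E_1\times E_2$ (a surface cut out in $\mathbb{P}^2\times\mathbb{P}^2$), and the paper's entire proof is the one-liner that the two twisted cycles $\zeta_1,\zeta_2$ built there, whose $2\times 2$ matrix of regulator pairings against $\eta_1,\eta_2$ has nonzero determinant, already furnish a basis of the (transcendental part of the) target $H^{1,1}(X,\mathbb{R})\simeq\mathbb{R}^2$; surjectivity of $\underline{r}_{2,1}$ is then immediate. You have instead set out to prove the twisted Hodge $\Dd$ conjecture for a genuine four-dimensional variety $S_1\times S_2$ with $S_j$ a product of two elliptic curves. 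That is a much stronger statement than the paper claims at this point: the paper's own treatment of the fourfold $E_1\times E_2\times E_3\times E_4$ (Sections 6--9) only constructs certain indecomposable twisted cycles supported on complete intersections, and the concluding remark explicitly defers the twisted Hodge $\Dd$ conjecture for such fourfolds to future work.

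Taken on its own terms, your argument has genuine gaps at exactly the points the paper's later sections show to be hard. First, the completion of a precycle $(f_\ell,D)$ to an element of $\underline{z}^k(X_0,1)$ for $k=3$ on a fourfold is not a ``factor-by-factor'' application of the product-of-curves lemma: there the supports are surfaces and the divisors to be cancelled are curves, and the paper needs the HVC formalism, the restriction isomorphism $\Pic^0(X)\cong\Pic^0(Y)$ for complete intersections, and the auxiliary surfaces $Pr_{1,2,3}(A)\times E_4$ to absorb the stray divisors --- and even then only for precycles induced from hyperplane sections. Second, your treatment of the mixed K\"unneth components concedes the decisive point: you write ``granted non-degeneracy'' of the matrix of regulator pairings, but that non-degeneracy is the entire content of the surjectivity claim and is not supplied. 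Third, the semicontinuity statement you invoke is proved in the paper only for $r_{2,1}$ on families of surfaces; its extension to $\underline{r}_{k,1}$ on fourfolds (where the moving arguments involve surfaces rather than curves) would itself require a new proof. If you intend the corollary as the paper does, the proof is simply to cite the basis $\{\underline{r}_{2,1}(\zeta_1),\underline{r}_{2,1}(\zeta_2)\}$ from the preceding theorem.
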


\begin{proof}
Follows right away as we constructed a basis.	
\end{proof}

\begin{rem}
We can see that the most important aspect of the proof is that we know that on any subvarieties, all points are homologically equivalent. This is the most important aspect. This changes for higher twisted cycles, as we will see in the next section. We need cycles that behave well homologically. That will turn out to be a complete intersection.
\end{rem}

\begin{section}{Higher dimension twisted cycles}

	Below we will present the settings and the theorems that we will be using. After that we will explain the idea of the proof, and the main point of this paper. Let $X$ be a smooth projective fourfolds. We want to construct $\overline{\zeta} \in \ul{z}^k(X,1)_P$, that have non-zero regulator such that we can construct $\zeta \in \ul{z}^k(X,1)$ preserving the non-triviality of the regulator value. That is, we want to construct a twisted indecomposable on $X$. In this paper we will focus on $\ul{z}^3(X,1)$. 

In this paper and construction we will be dealing with three types of geometric objects, divisors in $X$, and surfaces in $X$, and curves in $X$.

	In the case of fourfolds above we are interested to complete precycles in $\overline{\zeta} \in \ul{z}^3(X,1)_P$. Let $S = H_1 \cap H_2 \cap X$, intersection of two smooth hyperplane sections $H_1 \cap X$, and $H_2 \cap X$.

We work in the following settings, let $X = E_1 \times E_2 \times E_3 \times E_4$, is a general product of four elliptic curves. First we define horizontal and vertical cycles. They are cycles in $X$, where it is a product of subvarieties where one of the products are points in subfactors of $X = E_1 \times E_2 \times E_3 \times E_4$, and the other subfactor is subvariety of subfactor of the projection of $X$, defined more precisely below. 

We define two independent projections as projection from $X = E_1 \times E_2 \times E_3 \times E_4$ to subfactors of $X$, where these projections go to different subfactors. For instance the projections $P_1 = Pr_{1,2,3} : X \twoheadrightarrow E_1 \times E_2 \times E_3$ and $P_2  = Pr_{4} : X \twoheadrightarrow E_4$ are two independent projections. Another example if we let $P_1 = Pr_{1,2} : X \rightarrow E_1 \times E_2$, and $P_2 = Pr_{3,4} : X \rightarrow E_3 \times E_4$ are two independent projections.  

\begin{defn}
Let $P_1 : X = E_1 \times E_2 \times E_3 \times E_4 \twoheadrightarrow P_1(X)$ and $P_2 : X \twoheadrightarrow P_{2}(X) $ be two indepedent projections.
Let $B$ be the subvariety of $X$, where $P_1(B) = p$, a point in a subfactor of $X$, and $P_2(B) = V$ is a subvariety of subfactor of $X$, such that $B = p \times V$. Define $PHVC = B = p \times V$, and HVC to be formals sums of $PHVC$.
\end{defn}

\begin{ex}
	Let $p_1$ be a point in $E_1$, then $p_1 \times E_2 \times E_3 \times E_4$ are HVC. Another example is if $V$ is a curve in $E_2 \times E_3 \times E_4$, then $p_1 \times V$ is a HVC. There are other HVC, based on the definition above as well, for instance if $V$ is a curve in $E_3 \times E_4$, and $p_1 \times p_2 \times V$, is HVC, where $p_1 \times p_2$ are points in $E_1 \times E_2$. Other examples are possible, so $HVC$ are things of the form $p \times V$, where $p$ are points not necessary just in one factor of $E_j$, and $V$ is a subvariety in a projection distinct from the first projection. 
\end{ex}

In the theorem below we state it with respect to arbitrary smooth projective variety $X$. 
\begin{thm}
	Let $X$ be a smooth projective variety, and $Y$ is a normal subvariety of X such that $dim(Y) \geq 2$, that is complete intersection of hypersurfaces of $\mathbb{P}^N$, intersected with $X$ such that $Y$ is not necessarily smooth, then $Pic^0(X) \cong Pic^0(Y)$.
\end{thm}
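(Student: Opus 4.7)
The plan is to reduce the isomorphism $\text{Pic}^0(X) \cong \text{Pic}^0(Y)$ to an iterated application of the Lefschetz hyperplane theorem, then transport the identification from $H^1$ back to flat line bundles via the exponential sequence. Write $Y = X \cap H_1 \cap \cdots \cap H_r$, where the $H_i$ are hypersurfaces of $\mathbb{P}^N$ cutting out $Y$ as a proper complete intersection in $X$. Setting $Y_0 = X$ and $Y_i = Y_{i-1} \cap H_i$, I would first argue that by a generic Bertini-type choice of the cutting data (or using the given normality of $Y$ together with normalization of each step) each intermediate $Y_i$ may be assumed irreducible and normal, with $\dim Y_i = \dim X - i \geq \dim Y \geq 2$.

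Next I would apply the Lefschetz hyperplane theorem for ample hypersurface sections of normal projective varieties (in the form of Goresky--MacPherson, or via Deligne's mixed Hodge theory for $H^1$): for each $i$, the restriction map $H^1(Y_{i-1}, \mathbb{Z}) \to H^1(Y_i, \mathbb{Z})$ is an isomorphism, since $\dim Y_i \geq 2$. Iterating gives an isomorphism $H^1(X, \mathbb{Z}) \xrightarrow{\sim} H^1(Y, \mathbb{Z})$, and a parallel application to $H^2$ produces injectivity $H^2(X, \mathbb{Z}) \hookrightarrow H^2(Y, \mathbb{Z})$. Because $Y$ is normal projective, Deligne's theory guarantees that $H^1(Y)$ carries a pure Hodge structure and the restriction is a morphism of Hodge structures, so the induced map $H^{0,1}(X) \to H^{0,1}(Y)$ is also an isomorphism; passing to a desingularization $\widetilde{Y} \to Y$ preserves $H^1$ because $Y$ is normal, and this matches the definition in the paper of $\text{Pic}^0(Y)$ via the desingularization.

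To deduce the statement about $\text{Pic}^0$, I would compare the exponential sequences on $X$ and on $\widetilde{Y}$. On the smooth side, $\text{Pic}^0 \cong H^1(\mathcal{O})/H^1(\mathbb{Z})$. The restriction $L \mapsto L|_Y$, followed by pullback to $\widetilde{Y}$, is compatible with the exponential sequence, so the isomorphism on $H^1(\mathbb{Z})$ together with the Hodge-theoretic isomorphism on $H^{0,1}$ gives that the composition $\text{Pic}^0(X) \to \text{Pic}^0(Y) \to \text{Pic}^0(\widetilde{Y})$ is an isomorphism. Injectivity of $\text{Pic}^0(X) \to \text{Pic}^0(Y)$ follows from the $H^1$ comparison, while surjectivity follows because every flat line bundle on $Y$ (equivalently, every flat line bundle on $\widetilde{Y}$ in the sense of the paper's definition) corresponds to a class in $H^{0,1}(\widetilde{Y})/H^1(\widetilde{Y}, \mathbb{Z})$ that lifts uniquely to $H^{0,1}(X)/H^1(X, \mathbb{Z})$ and therefore to a flat line bundle on $X$ whose restriction realises it.

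The main obstacle I anticipate is not the Lefschetz step in isolation but its integral formulation and its compatibility with the paper's definition of flatness on the singular variety $Y$. Concretely, one must verify (a) that the Lefschetz isomorphism on $H^1(\mathbb{Z})$ persists through the (possibly non-smooth) intermediate sections $Y_i$, which requires the normality hypothesis to rule out loss of connectivity and pathology of the fundamental group, and (b) that the restriction of a line bundle on $X$ whose pullback to $\widetilde{Y}$ is metrically flat is in fact the restriction of a flat line bundle on $X$ itself, i.e.\ that the $c_1 = 0$ condition transfers back through the injection $H^2(X, \mathbb{Z}) \hookrightarrow H^2(Y, \mathbb{Z})$. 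Both of these reduce to standard (but delicate) applications of Hodge theory and the Lefschetz theorem for normal complete intersections.
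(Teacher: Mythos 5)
The paper states this theorem without proof --- it is invoked as a known Grothendieck--Lefschetz-type result (the bibliography's reference to Lazarsfeld's \emph{Positivity} is the standard source) --- so there is no in-paper argument to compare yours against, and I can only assess your sketch on its own terms. The skeleton (Lefschetz on $H^1$, then the exponential sequence) is the right one, but one step as written is false and the surjectivity half leans on it. You assert that ``passing to a desingularization $\widetilde Y \to Y$ preserves $H^1$ because $Y$ is normal.'' For normal projective $Y$ the map $H^1(Y,\QQ)\to H^1(\widetilde Y,\QQ)$ is injective but in general not surjective: the projective cone over a smooth plane cubic is a normal hypersurface section of $\PP^3$ of dimension $2$, hence satisfies the hypotheses; it has $H^1(Y,\QQ)=0$, while its resolution is a ruled surface over the elliptic curve with $b_1=2$. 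Since the paper defines $\Pic^0(Y)$ for singular $Y$ via flat metrics pulled back to $\widetilde Y$, your identification of $\Pic^0(Y)$ with $H^{0,1}(\widetilde Y)/H^1(\widetilde Y,\ZZ)$ and then with $H^{0,1}(Y)/H^1(Y,\ZZ)$ collapses precisely here: in the cone example $\Pic^0(\widetilde Y)\neq 0$ while the theorem requires $\Pic^0(Y)=0$. What saves the statement is the injection $\Pic(Y)\hookrightarrow\Pic(\widetilde Y)$ (from $\pi_\ast\mathcal O_{\widetilde Y}=\mathcal O_Y$) together with an argument that a line bundle on $Y$ whose pullback is flat already comes from $H^1(Y,\mathcal O_Y)/H^1(Y,\ZZ)$; that comparison is a step you still need to supply.

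Two further points. Your reduction to ``generic Bertini-type choices'' of the intermediate sections $Y_i$ is not available, because the hypersurfaces cutting out $Y$ are given, not chosen; the correct tool is Hamm's Lefschetz theorem for local complete intersections (the $Y_i$ are lci in the smooth $X$, which is the hypothesis that matters --- normality alone does not guarantee the Lefschetz range), and it handles arbitrary non-generic ample sections, so the intermediate steps can be dispensed with. More seriously, surjectivity of $\Pic^0(X)\to\Pic^0(Y)$ cannot be extracted from the topological $H^1$-comparison alone: you also need $H^1(X,\mathcal O_X)\to H^1(Y,\mathcal O_Y)$ to be an isomorphism, which is a coherent statement. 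The clean route is the Koszul resolution of $\mathcal O_Y$ on the smooth ambient $X$ combined with Kodaira vanishing on $X$ (this needs no smoothness or genericity of $Y$, only that it has the expected codimension), after which the exponential sequence on $Y^{\rm an}$, the topological isomorphism on $H^1(\cdot,\ZZ)$, and the flatness comparison of the previous paragraph together yield the theorem.
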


\begin{thm}
Let $X$ be a smooth projective variety of dimension at least 4, and $Y$ be a smooth, complete intersection intersection of dimension at least 3. The restriction maps of Picard groups $\Pic(X) \rightarrow \Pic(Y)$ is an isomorphism. 
\end{thm}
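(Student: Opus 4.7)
The plan is to reduce the statement to the Lefschetz hyperplane theorem applied to $H^1$ and $H^2$, combined with Theorem 6.2 for the $\Pic^0$ part. I use the N\'eron-Severi exact sequence
$$
0 \to \Pic^0 \to \Pic \to \operatorname{NS} \to 0
$$
on both $X$ and $Y$, so that $\Pic(X) \cong \Pic(Y)$ splits via the five-lemma into $\Pic^0(X) \cong \Pic^0(Y)$ together with $\operatorname{NS}(X) \cong \operatorname{NS}(Y)$. The first isomorphism is immediate from Theorem 6.2, since a smooth complete intersection of dimension $\geq 3$ is in particular a normal complete intersection of dimension $\geq 2$; so the remaining task is to prove $\operatorname{NS}(X) \cong \operatorname{NS}(Y)$.

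For this, I would write $Y = X \cap H_1 \cap \cdots \cap H_r$ with $r = \dim X - \dim Y$ and (after a Veronese re-embedding) each $H_i$ a hyperplane of a fixed degree. Set $X_i = X \cap H_1 \cap \cdots \cap H_i$, so $X_0 = X$, $X_r = Y$, and $\dim X_{i-1} \geq \dim Y + 1 \geq 4$ for each $i = 1, \ldots, r$. Generically --- i.e.\ on a Zariski dense open set of tuples $(H_1, \ldots, H_r)$ by iterated Bertini --- every $X_i$ is smooth, and each $X_i \hookrightarrow X_{i-1}$ is a smooth ample divisor in a smooth variety of dimension $\geq 4$. The Lefschetz hyperplane theorem then gives $H^2(X_{i-1},\mathbb{Z}) \xrightarrow{\sim} H^2(X_i,\mathbb{Z})$ as Hodge structures, and composing yields $H^2(X,\mathbb{Z}) \xrightarrow{\sim} H^2(Y,\mathbb{Z})$; intersecting with $H^{1,1}$ gives $\operatorname{NS}(X) \cong \operatorname{NS}(Y)$ via restriction.

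The given $Y$ need not sit in a flag of smooth intermediate sections, so I would close with a deformation argument. Let $T$ be the parameter space of tuples $(H_1, \ldots, H_r)$ of the chosen degree and $T_{sm} \subset T$ the Zariski open on which $Y_t$ is smooth; our $Y$ corresponds to some $t_0 \in T_{sm}$. Over the irreducible, hence connected, base $T_{sm}$ the family $\mathcal{Y} \to T_{sm}$ is smooth proper, so $R^2 \pi_* \mathbb{Z}$ is a local system, and the restriction from the constant group $H^2(X,\mathbb{Z})$ has locally constant rank; being an isomorphism on the flag-smooth Zariski open $U \subset T_{sm}$ forces it at $t_0$ as well. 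The main obstacle is to make this deformation step rigorous --- in particular to verify that the Hodge-theoretic refinement (preservation of $H^{1,1}$) passes through the non-generic fiber; an alternative that avoids the issue is to cite Grothendieck's theorem on Picard groups of ample local complete intersections, which gives $\Pic(X) \cong \Pic(Y)$ directly for smooth ample complete intersections of dimension $\geq 3$ in a smooth projective variety of dimension $\geq 4$.
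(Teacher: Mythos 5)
The paper states this theorem without any proof at all --- it is invoked as a known Grothendieck--Lefschetz-type result --- so there is no argument of the author's to compare yours against. On its own merits, your proposal is the standard and essentially correct route: split $\Pic$ via the N\'eron--Severi sequence, handle $\Pic^0$ by the preceding theorem, and handle $\operatorname{NS}$ by the Lefschetz hyperplane theorem on $H^2$ (the dimension bookkeeping is right: each step is an ample divisor of dimension $\geq 3$ inside a smooth variety of dimension $\geq 4$, so $H^2$ restricts isomorphically). Two remarks on the step you flag as the main obstacle. First, the Hodge-theoretic worry is not actually an issue: the restriction map at the special fiber $t_0$ is induced by an algebraic morphism, hence is automatically a morphism of Hodge structures, and any isomorphism of integral Hodge structures identifies $H^2(\cdot,\mathbb{Z}) \cap H^{1,1}$ on both sides; so once the integral isomorphism at $t_0$ is established by parallel transport in the local system $R^2\pi_*\mathbb{Z}$ (a map of local systems over a connected base that is an isomorphism at one point is one everywhere), the $\operatorname{NS}$ statement follows with no further input. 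Second, the deformation detour can be avoided entirely by invoking the complete-intersection form of the Lefschetz hyperplane theorem (Hamm, or Goresky--MacPherson), which gives $H^k(X,\mathbb{Z}) \xrightarrow{\sim} H^k(Y,\mathbb{Z})$ for $k < \dim Y$ directly for $Y$ a (not necessarily generic) complete intersection of ample hypersurfaces, with no smoothness hypothesis on intermediate sections; alternatively your citation of Grothendieck (SGA 2, Exp.\ XII) is exactly the reference the paper itself is implicitly leaning on.
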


Now we go back to the settings, where $X = E_1 \times E_2 \times E_3 \times E_4$, product of  of general elliptic curves $E_j$, consider the following proposition.

\begin{prop}
Any divisor $A_X$ on $X$ is homologically equivalent to $HVC$. It follows that there exists triple $(\gamma,X,M)$ such that $\div(\gamma)_X = A_X - HVC$.
\end{prop}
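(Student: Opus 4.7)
The plan is to reduce the proposition to the statement that the N\'eron--Severi group of $X$, rationally, is spanned by the HVC divisor classes, and then to invoke the flat line bundle correspondence already recorded in the introduction. Recall the remark following the definition of $\Pic^0(Z)$: if $D_1\sim_{\hom,X}D_2$ then there exist a flat line bundle $M$ on $X$ and a nonzero rational section $\gamma$ of $M$ with $\div(\gamma)_X=D_1-D_2$. Hence the existence of the triple $(\gamma,X,M)$ in the proposition follows automatically from the purely cohomological claim that $[A_X]$ equals a combination of HVC classes in $H^2(X,\mathbb{Q})$.

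To establish that cohomological claim, I would apply the K\"unneth formula and group the summands of $H^2(X,\mathbb{Q})$ into two types: the four ``pure'' pieces $H^2(E_i)\otimes\bigotimes_{k\ne i}H^0(E_k)$, and the six ``mixed'' pieces $H^1(E_i)\otimes H^1(E_j)\otimes\bigotimes_{k\ne i,j}H^0(E_k)$ with $i<j$. The four pure pieces are spanned precisely by the Poincar\'e duals of the canonical HVC divisors $\mathrm{pt}_i\times\prod_{k\ne i}E_k$. Since $A_X$ is a divisor on $X$, its class lies in $\operatorname{NS}(X)\otimes\mathbb{Q}\subset H^{1,1}(X)\cap H^2(X,\mathbb{Q})$, so everything reduces to showing that the six mixed pieces contain no nonzero algebraic classes.

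For this vanishing I would use the classical identification
$$
\operatorname{NS}(E_i\times E_j)\cap\big(H^1(E_i)\otimes H^1(E_j)\big)\;\simeq\;\operatorname{Hom}(E_i,E_j),
$$
obtained by sending an isogeny $E_i\to E_j$ to the class of its graph (pulled back from the corresponding two-factor slice of $X$ via projection). Because $X$ is a \emph{general} product of four mutually non-isogenous elliptic curves without complex multiplication, every such Hom group vanishes, so each mixed K\"unneth summand is free of algebraic classes. Combining, $[A_X]=\sum_{i=1}^4 n_i\,[\mathrm{pt}_i\times\prod_{k\ne i}E_k]$ in $H^2(X,\mathbb{Q})$, which is precisely $A_X\sim_{\hom,X}\mathrm{HVC}$; the flat line bundle remark then produces the required $(\gamma,X,M)$.

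The main obstacle is the mixed-piece vanishing, which depends essentially on the genericity hypothesis: any isogeny between two factors would introduce its graph as an extra Hodge class and produce divisors not homologically equivalent to any sum of HVC. Once that vanishing is secured, a short observation that $\operatorname{NS}(X)\cong\mathbb{Z}^4$ is integrally generated by the four canonical HVC classes (each $E_i$ has Picard rank one and there are no Hom contributions) promotes the rational relation to an integral one, after which the construction of $(\gamma,M)$ from the introduction's remark is immediate.
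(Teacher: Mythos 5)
Your proposal is correct and follows essentially the same route as the paper: the paper's proof is a one-line appeal to ``K\"unneth decomposition and period calculations'' followed by the flat-line-bundle remark, and your argument is exactly that, with the ``period calculations'' made explicit as the vanishing of algebraic classes in the mixed K\"unneth summands $H^1(E_i)\otimes H^1(E_j)$ via $\operatorname{Hom}(E_i,E_j)=0$ for general (non-isogenous) factors. The only cosmetic point is that the no-CM hypothesis is not needed for the off-diagonal Hom groups to vanish; mutual non-isogeny already suffices.
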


\begin{proof}
By Kunneth decomposition, and period calculations, we can show that for any divisor $A$, we have 

\begin{align*}
	A \sim_{hom,X} & n_1 \cdot p_1 \times E_2 \times E_3 \times E_4 + n_2 \cdot E_1 \times p_2 \times E_3 \times E_4 \\
	& + n_3 \cdot E_1 \times E_2 \times p_3 \times E_4 + n_4 \cdot E_1 \times E_2 \times E_3 \times p_4 =: HVC
\end{align*}

where $p_j \in E_j$, and as $X$ is smooth, it follows there exists non-zero rational section $\gamma$ on flat line bundle $M$ such that $\div(\gamma)_X = A - HVC$. 
\end{proof}

Thus we have the following corollary, below we $C$ stands for cycle. 
\begin{cor}
Given any $A_{X,C} \in z^1(X)$, effective, then it follows there exists triple $(\gamma_C,X,M_C)$ such that $\div(\gamma)_X = A_{X,C} - HVC$.
\end{cor}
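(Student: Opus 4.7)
The plan is to recognize that this corollary is essentially an immediate consequence of Proposition 3.5 together with the standard principle (recorded in the introductory remark on flat line bundles) that homologically trivial degree differences of divisors are realized by rational sections of flat line bundles. So my strategy is to apply Proposition 3.5 to the effective divisor $A_{X,C}$, extract the homological equivalence to HVC, and then invoke the flat line bundle construction from Section 1 to produce the triple $(\gamma_C, X, M_C)$.

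More precisely, I would proceed in three short steps. First, since $A_{X,C} \in z^1(X)$ is a divisor on the smooth projective variety $X = E_1 \times E_2 \times E_3 \times E_4$, Proposition 3.5 applies directly (the hypothesis of effectivity is not needed for the conclusion, it is only a labeling condition here) and gives a horizontal-vertical cycle
\[
HVC = n_1 (p_1 \times E_2 \times E_3 \times E_4) + \cdots + n_4 (E_1 \times E_2 \times E_3 \times p_4)
\]
together with a homological equivalence $A_{X,C} \sim_{\hom, X} HVC$. Second, I would recall from the introduction that, on a smooth projective variety $Z$, the first Chern class is zero precisely on flat line bundles, and that a homological equivalence $D_1 \sim_{\hom, Z} D_2$ implies the existence of a flat line bundle $L$ and a nonzero rational section $\sigma \in \operatorname{Rat}^{*}(L)$ with $\operatorname{div}(\sigma)_Z = D_1 - D_2$. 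Third, I would apply this principle to $D_1 = A_{X,C}$ and $D_2 = HVC$ on $Z = X$ to extract a flat line bundle $M_C$ and a nonzero rational section $\gamma_C \in \operatorname{Rat}^{*}(M_C)$ satisfying $\operatorname{div}(\gamma_C)_X = A_{X,C} - HVC$, which is exactly the desired triple.

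There is essentially no main obstacle here beyond bookkeeping: all the geometric content was already absorbed into Proposition 3.5 and into the construction of flat line bundles out of homologically trivial divisor differences. The only thing worth verifying explicitly is that the homological equivalence produced in Proposition 3.5 is computed in $H^2(X,\mathbb{Z})$ modulo torsion in a way compatible with the flat line bundle construction, i.e.\ that $c_1$ of the line bundle attached to $A_{X,C} - HVC$ vanishes in $H^2(X,\mathbb{R})$; this is exactly what the Kunneth plus period calculation in the proof of Proposition 3.5 gives. Effectivity of $A_{X,C}$ plays no role in the argument itself; it is used later to ensure that $A_{X,C}$ arises as a genuine divisor one wants to subtract HVC from, which is what will be relevant when the corollary is invoked for the construction of indecomposable twisted cycles on $X$.
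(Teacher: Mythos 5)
Your proposal is correct and follows exactly the route the paper intends: the corollary is stated as an immediate consequence of the preceding proposition (the paper gives no separate proof), and your three steps --- homological equivalence to $HVC$ via the K\"unneth/period argument, then the flat-line-bundle principle for homologically trivial divisor differences on the smooth projective $X$ --- are precisely the content of that proposition and its proof. Your observation that effectivity of $A_{X,C}$ is not used in the argument is also accurate.
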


\begin{defn}
Let $S$ be any arbitrary surface in $X$. Define horizontal and vertical cycles in $S$ as the pull-back of $HVC$ on $X$ to $S$, i.e. 
\begin{align*}
	HVC := 	&\ n_1 \cdot p_1 \times E_2 \times E_3 \times E_4 \cap S + n_2 \cdot E_1 \times p_2 \times E_3 \times E_4 \cap S \\
	 \ &+ \ n_3 \cdot E_1 \times E_2 \times p_3 \times E_4 \cap S \ + \ n_4 \cdot E_1 \times E_2 \times E_3 \times p_4 \cap S
\end{align*}

\end{defn}

Now we go back to the setting where $S = H_1 \cap H_2 \cap X$.

\begin{cor}
Given a divisor $A \subset S$ a divisor, then there exists triple $(\sigma,S,L)$ such that

\begin{align}
 	\div(\sigma)_S = A - (n_1 p_1 \times W_1 + n_2 p_2 \times W_2 + n_3 p_3 \times W_3 + n_4 p_4 \times W_4) = A - HVC. 
 \end{align}
 
\end{cor}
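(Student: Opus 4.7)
The plan is to reduce the claim on $S$ to the already-established statement on $X$ (the preceding corollary), then descend back via the Lefschetz-type isomorphism $\Pic^0(X)\cong\Pic^0(S)$ provided by Theorem 5.4. Producing the triple $(\sigma,S,L)$ with $\div(\sigma)_S=A-HVC$ is equivalent to finding an HVC combination on $S$ for which $c_1\bigl(\CO_S(A-HVC)\bigr)=0$; once that holds, $L:=\CO_S(A-HVC)$ is an element of $\Pic^0(S)$, the flat metric on it is supplied by Theorem 5.4, and $\sigma$ is the tautological rational section of $L$ whose divisor equals $A-HVC$.

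First I would apply the Lefschetz hyperplane theorem twice to the tower $S=H_1\cap H_2\cap X\subset H_1\cap X\subset X$, which yields an injection $H^2(X,\ZZ)\hookrightarrow H^2(S,\ZZ)$. Combining this injection with the K\"unneth decomposition of $H^2$ of the product of four general elliptic curves shows that the image is $\Q$-spanned by the restrictions $[(p_i\times\widehat{E}_i)\cap S]$ of the four HVC classes on $X$, where $\widehat{E}_i$ denotes the product of the remaining three factors. In the intended setting of application --- completing a precycle $\overline{\zeta}\in\ul z^3(X,1)_P$ --- the divisor $A\subset S$ arises as the restriction of a divisor $A_X$ on $X$ cut out by an auxiliary hypersurface. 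Applying the preceding corollary to $A_X$ then yields a triple $(\gamma,X,M)$ with $\div(\gamma)_X=A_X-HVC_X$ and $M$ flat on $X$; restricting to $S$ gives $\div(\gamma|_S)_S=A-HVC_X|_S$, and by the very definition of $HVC$ on $S$ the right-hand side is $A-HVC$.

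It remains to check that $L:=M|_S$ is flat on $S$. On the cohomological side this is immediate from $c_1(M|_S)=c_1(M)|_S=0$. To produce an honest flat metric, I would pull back any flat metric $\|\cdot\|_M$ on $M$ along the inclusion $S\hookrightarrow X$; its curvature on $S$ is the restriction of the vanishing curvature form on $X$ and so is itself zero, which makes $\|\cdot\|_M|_S$ a flat metric on $M|_S$. Theorem 5.4 moreover identifies this $L\in\Pic^0(S)$ canonically with its preimage in $\Pic^0(X)$, so the construction is well-defined.

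The hard part will be the very first step: showing that $[A]\in\mathrm{NS}(S)$ actually lies in the $\Q$-span of the HVC classes. The N\'eron--Severi group of a general complete-intersection surface $S$ in $X$ can a priori be strictly larger than the image of $\mathrm{NS}(X)\to\mathrm{NS}(S)$, so the statement for an \emph{arbitrary} divisor $A\subset S$ really requires either a Noether--Lefschetz type genericity hypothesis on the pair $(H_1,H_2)$ that forces $\mathrm{NS}(S)$ to be generated by HVC restrictions, or --- as used above --- the implicit assumption that $A$ extends to a divisor on $X$. The second option is the one suited to the forthcoming construction of indecomposable twisted cycles of codimension~$3$ on elliptic fourfolds, and is therefore the path I would take.
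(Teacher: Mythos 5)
Your proposal is correct and follows essentially the same route as the paper's own proof: every divisor on $X$ is homologically equivalent to an HVC combination via the K\"unneth decomposition, the resulting flat triple $(\gamma,X,M)$ restricts from $X$ to $S$, and the $\Pic^0(X)\cong\Pic^0(S)$ theorem guarantees one lands in flat line bundles on $S$. The caveat you flag --- that an arbitrary divisor $A\subset S$ need not extend to $X$, so the statement as written really presupposes either a Noether--Lefschetz genericity hypothesis on $(H_1,H_2)$ or that $A$ is the restriction of a divisor on $X$ --- is a genuine gap, but one shared by the paper's own (very terse) proof, which silently makes the same assumption; since in every subsequent application $A$ is a complete intersection $H_1\cap H_2\cap H_3\cap X$, your chosen resolution matches the intended use.
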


\begin{proof}
	We have that $Pic^0(X) = Pic^0(S)$, by theorem $7.3$, and since any divisor on $X$ is homologically equivalent to cycle of the form $HVC$ defined above, and these cycles restrict to cycles of the form $HVC$ defined in definition 6.7.

It follows $HVC$ are precycles the curves that are of the form given in $(1)$. It follows there exists triple $(\sigma,S,L)$ such that $\div(\sigma)_{S} = A - HVC$.
\end{proof}

\end{section}

\begin{section}{Hypersurface arrangements}

Below we introduce hyperplane surface arrangements, which is a construction that allows us to construct twisted cycles from specific precyles. 

We will divide our construction into sequence of generalization. First given a precycle of the following form $A = H_3 \cap S\sim_{hom,S} HVC$, where $H_3 \cap X$ is a hyperplane section. Notice $A = H_1 \cap H_2 \cap H_3 \cap X$. This will induce the triple $(\sigma,S,L)$ such that $\div(\sigma)_S = A - HVC$. First we take $HVC$ homological representative of $A$. Notice the reason we pick such $A$ is because $A$ deforms easily. 

\begin{lem}
	We can construct a precycle $\overline{\zeta}_1$ from the surfaces $S$ such that
	
	\begin{enumerate}
		\item $\displaystyle \underline{r}_{3,1}(\overline{\zeta}_1)(\eta) = \int_{S} \log \norm*{\sigma} \eta$.
		\item $\div(\overline{\zeta}_1) = HVC$.
	\end{enumerate}

\end{lem}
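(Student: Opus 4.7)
The plan is to build $\overline{\zeta}_1$ by starting with the triple $(\sigma,S,L)$ furnished by Corollary $6.10$ (so that $\div(\sigma)_S = A - HVC_S$) and then adding further precycle summands to eliminate the non-HVC part $A$ from the divisor while guaranteeing that these additions contribute nothing to the regulator pairing against the transcendental class $\eta$.

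The key structural input I would use is that $A = H_1\cap H_2\cap H_3\cap X$ is a complete intersection, hence sits as a divisor simultaneously on each of the complete intersection surfaces $S = H_1\cap H_2\cap X$, $S_{13} = H_1\cap H_3\cap X$, and $S_{23} = H_2\cap H_3\cap X$. Applying Corollary $6.10$ to the pair $(A, S_{13})$ yields a triple $(\sigma', S_{13}, L')$ with $\div(\sigma')_{S_{13}} = A - HVC'$ where $HVC'$ is a formal sum of horizontal-vertical curves in $X$. I would then set
$$\overline{\zeta}_1 \;:=\; (\sigma, S, L) \;-\; (\sigma', S_{13}, L').$$
The divisor computation gives $\div(\overline{\zeta}_1) = (A - HVC_S) - (A - HVC') = HVC' - HVC_S$, which is a genuine HVC in $X$, verifying (2). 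The freedom to permute the roles of $H_1, H_2, H_3$ means one can instead choose $S_{23}$ or take linear combinations of both corrections if needed to align divisors more cleanly.

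The main obstacle is condition (1), namely arranging that the correction term contributes zero: $\int_{S_{13}} \log\lVert\sigma'\rVert_{L'}\,\eta = 0$. This is non-trivial because $S_{13}$ is not itself an HVC-type surface and so $\eta|_{S_{13}}$ need not vanish by K\"unneth. Here I would invoke the \emph{independence of metric} remark following Proposition $3.3$: rescaling the flat metric by a positive constant $e^\lambda$ shifts $\log\lVert\sigma'\rVert$ by $\lambda$, modifying the integral by $\lambda \int_{S_{13}}\eta$, and this freedom places the integral contribution into the quotient by $H^{k-1,k-1}_{\alg}(X,\RR)$ where the regulator naturally lives. If residual transcendental contributions remain, they can be absorbed by further correction triples supported on honest HVC surfaces (surfaces of the form $p_i\times Y$), on which $\eta$ restricts to zero by the K\"unneth-type vanishing already exploited in Theorem $5.5$ and Lemma $5.7$; such triples can be chosen so that their divisors are already HVC, preserving (2). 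The delicate bookkeeping lies in ensuring that all these simultaneous adjustments preserve both the divisor identity $\div(\overline{\zeta}_1) = HVC$ and the regulator identity, which I expect to follow from the linearity of $\underline{r}_{3,1}$ together with a dimension count on the space of available flat line bundles on each $S_{ij}$ guaranteed by Theorems $6.4$-$6.5$.
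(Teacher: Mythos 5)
There is a genuine gap, and it lies exactly where you flag it: the choice of the auxiliary surface carrying the second copy of $A$. You take $S_{13}=H_1\cap H_3\cap X$, observe correctly that $\eta|_{S_{13}}$ need not vanish, and then try to repair this in two ways, neither of which works. First, the metric-independence remark only lets you shift $\log\lVert\sigma'\rVert$ by a constant $\lambda$, changing the pairing by $\lambda\int_{S_{13}}\eta$; but $\eta$ is a transcendental $(2,2)$ class and $[S_{13}]$ is algebraic, so $\int_{S_{13}}\eta=0$ and the rescaling changes nothing --- the unwanted term $\int_{S_{13}}\log\lVert\sigma'\rVert\,\eta$ is untouched and generically nonzero. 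Second, any further correction triple supported on an honest HVC-type surface pairs to zero against $\eta$ precisely because $\eta$ restricts to zero there; such terms therefore cannot \emph{cancel} a nonzero transcendental contribution coming from $S_{13}$. So your $\overline{\zeta}_1$ satisfies (2) but fails (1).

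The missing idea is that the correction surface must simultaneously (a) contain $A$ as a divisor and (b) be of product type so that $\eta$ restricts to zero on it. The paper achieves both by taking $Z=Pr_{1,2,3}(A)\times E_4$: since $A$ is a complete intersection curve, $Z$ contains $A$, and $Z$ is a curve times $E_4$, hence kills the transcendental $(2,2)$ form by K\"unneth. Proposition $8.2$ then produces $(\beta,Z,J)$ with $\div(\beta)_Z=A-HVC$ by pulling back the global triple $(\lambda,X,M)$ with $\div(\lambda)_X=H_3\cap X-HVC$, using irreducibility of $H_3\cap Z$ to identify the pullback of $H_3\cap X$ with $A$, and the explicit computation that $HVC$ pulls back to $HVC$ on $Z$. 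Setting $\overline{\zeta}_1=(\sigma,S,L)-(\beta,Z,J)$ then gives $\div(\overline{\zeta}_1)=\pm HVC$ and $\underline{r}_{3,1}(\overline{\zeta}_1)(\eta)=\int_S\log\lVert\sigma\rVert\,\eta$ exactly because $\eta|_Z=0$. Your divisor bookkeeping is fine; the argument cannot be completed with $S_{13}$ in place of $Z$.
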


To prove Lemma 7.1, we will need the following Proposition, 

\begin{prop}
There exists a triple $(\beta,Z,J)$ where the divisor satisfies $\div(\beta)_Z = A - HVC$ over $Z = Pr_{1,2,3}(A) \times E_4$.

\end{prop}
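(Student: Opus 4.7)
The plan is to exhibit a flat line bundle $J$ on $Z = Pr_{1,2,3}(A) \times E_4$ together with a rational section $\beta \in \Rat^*(J)$ whose divisor is $A - \HVC$. By Definition~1.12 and Lemma~1.6, this reduces to producing a divisor in the class $[A] - [\HVC]$ on $Z$ whose first Chern class vanishes.

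First I would lift the problem from the curve $A$ on the surface $Z$ back to the ambient fourfold $X$. Concretely, I would choose a hypersurface $D_A \subset X$ of the form $\overline{V(f_1) \cap X}$ for an appropriately chosen polynomial $f_1$, arranged so that $A \subset D_A$ and the restriction decomposes as $D_A|_Z = A + B$ for an explicit auxiliary curve $B$ on $Z$. This is precisely the pull-back picture sketched in the commented-out $E_1 \times E_2 \times E_3 \times E_4$ calculation earlier in the section, where $B$ arises from the $y_4 \mapsto -y_4$ involution on the $E_4$ factor. Applying Proposition~7.5 to the divisor $D_A$ on $X$ produces a triple $(\gamma, X, M)$ with $M$ a flat line bundle on $X$ and $\div(\gamma)_X = D_A - \HVC_X$ for a suitable $\HVC_X$.

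Next I would restrict the triple from $X$ to $Z$: the bundle $M|_Z$ is flat on $Z$ since its first Chern class is the pull-back of the zero class, and the restricted section $\gamma|_Z$ has divisor $A + B - \HVC_X|_Z$ on $Z$. To isolate $A - \HVC$, I would eliminate the auxiliary curve $B$ by exploiting the product structure $Z = C \times E_4$ together with the natural involution $\tau = (\mathrm{id}_C, -1_{E_4})$ coming from the group law on $E_4$. In the explicit polynomial construction for $f_1$, the curves $A$ and $B$ are exchanged by $\tau$ while the $\HVC$ classes on $Z$ are preserved up to rational equivalence. Averaging $\gamma|_Z$ with its $\tau$-twist and absorbing the residual contribution into the $\HVC$ part yields the required triple $(\beta, Z, J)$ with $\div(\beta)_Z = A - \HVC$.

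The main obstacle will be the bookkeeping around the auxiliary curve $B$: one must verify that $A$ and $B$ lie in the same coset of $\HVC$ modulo $\Pic^0(Z)$, so that the symmetrization genuinely produces a flat line bundle rather than merely a line bundle with an $\HVC$-type class. This reduces to checking that the mixed correspondence class of $A$ in $H^1(C) \otimes H^1(E_4)$ is invariant (up to sign) under $\tau$, together with the vanishing of $\mathrm{Hom}(\mathrm{Jac}(C), E_4)$ modulo the factors contributed by $E_1 \times E_2 \times E_3$; this follows from the generality assumption that the elliptic curves $E_1, E_2, E_3, E_4$ are pairwise non-isogenous, which ensures that no unexpected correspondences survive after quotienting by the $\HVC$ classes pulled back from $X$.
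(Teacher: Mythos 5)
Your proposal diverges from the paper's argument at the crucial point and the divergence introduces a genuine gap. You model $A$ as one piece of a restriction $D_A|_Z = A + B$ for a general hypersurface $D_A = \overline{V(f_1)\cap X}$, and then propose to eliminate the auxiliary curve $B$ by symmetrizing under the involution $\tau = (\mathrm{id}, -1_{E_4})$. This elimination does not work: if $\tau$ exchanges $A$ and $B$ (and fixes the $HVC$ classes up to rational equivalence), then any combination of $\gamma|_Z$ with $\tau^{*}(\gamma|_Z)$ has a divisor that is symmetric in $A$ and $B$ --- the product doubles $A+B$, the quotient cancels it entirely --- so no such averaging isolates $A - HVC$. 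This is precisely the dead end recorded in the author's abandoned computation with $f_1 = x_1^2x_2x_3^2x_4+1$, where the $A+B$ decomposition forces the resulting completion to have zero regulator value. The closing paragraph about cosets of $HVC$ modulo $\Pic^0(Z)$ and non-isogeny of the $E_j$ is addressing a difficulty that the correct argument never encounters.

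The missing idea is to exploit that $A$ is itself a hyperplane section: $A = H_1\cap H_2\cap H_3\cap X$ with $Z = Pr_{1,2,3}(A)\times E_4$ of dimension $\geq 2$, so by the Bertini-type theorem of Jouanolou the intersection $Z\cap H_3$ is irreducible; since it contains $A$, it \emph{equals} $A$, and no auxiliary curve $B$ appears at all. One then takes the triple $(\lambda, X, M)$ with $\div(\lambda)_X = H_3\cap X - HVC$, which exists because every divisor on $X = E_1\times E_2\times E_3\times E_4$ is homologically equivalent to a horizontal--vertical cycle, and restricts it to $Z$: flatness of $M$ is preserved under pullback since $c_1$ pulls back to zero, the $HVC$ on $X$ pulls back to an $HVC$ on $Z$ (of the form $\sum_i m_{1,i}\, q_{1,i}\times E_4 + m_{2,i}\, Pr_{1,2,3}(A)\times q_{2,i}$), and the divisor of the restricted section is exactly $A - HVC$. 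Your first paragraph (reduce to exhibiting a rational section of a flat bundle, work via a divisor on $X$ and restrict) is on the right track; the error is in choosing a general $V(f_1)$ rather than $H_3$ itself, which is what creates the unremovable $B$.
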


\begin{proof}

	By definition, $\div(\sigma)_S = H_3 \cap X \cap S - HVC = H_3 \cap H_2 \cap H_3 \cap X - HVC = A - HVC$. Recall that $S = H_1 \cap H_2 \cap X$, from which it follows that $A := H_3 \cap S = H_1 \cap H_2 \cap H_3 \cap X$, is a complete intersection of hyperplane sections. Any divisor $H_3 \cap X$ in $X$ is homologically equivalent to horizontal and vertical cycle. It follows that 
	\begin{align*}
		H_3 \cap X \sim_{hom,X} HVC =\  &n_1p_1 \times E_2 \times E_3 \times E_4 + n_2E_1 \times p_2 \times E_3 \times E_4 \\
		& n_3E_1 \times E_2 \times p_3 \times E_4 + n_4E_1 \times E_2 \times E_3 \times p_4.
	\end{align*}
	
We have inclusion map $Z = Pr_{1,2,3}(A) \times E_4 \rightarrow X$. We are interested in computing the pull-back of $H_3 \cap X$ and $HVC$ from $X$ to $Z$. If $Z$ isn't smooth, then we can pull-back with respect to desingularization $\tilde{Z} \rightarrow Z$, and then push forward to $Z$. Therefore we, we can assume that pull-back exists with respect to $Z$.

First we deal with the $HVC$. When the component of $HVC$ is of the form 
$$n_1p_1 \times E_2 \times E_3 \times E_4 + n_2E_1 \times p_2 \times E_3 \times E_4 + n_3E_1 \times E_2 \times p_3 \times E_4$$

$\newline$then that pulls back to $\sum_i m_{1,i} \cdot q_{1,i} \times E_4$ on $Z$, because $E_4$ doesn't change in the component above, as points vary in $E_1$, $E_2$, or $E_3$. It follows, we are pulling back from $X$ to $Pr_{1,2,3}(A)$, which is a curve, and the divisors of curves are points. 

When we pull-back the component

$$n_4E_1 \times E_2 \times E_3 \times p_4$$

Then, as $p_4$ is a point in $E_4$, and since $E_1 \times E_2 \times E_3$ contains $Pr_{1,2,3}(A)$, so the pull-back will be $n_4 Pr_{1,2,3}(A) \times p_4$. It follows the pull-back of that component will be of the form, $\sum_i m_{2,i} \cdot Pr_{1,2,3}(A) \times q_{2,i}$. Therefore the pull-back of $HVC$ will be 

$$\sum_i m_{1,i} \cdot q_{1,i} \times E_4 + m_{2,i} \cdot Pr_{1,2,3}(A) \times q_{2,i}$$

Since $X$ is smooth, there exists triple $(\lambda,X,M)$ such that $\div(\lambda)_X = H_3 \cap X - HVC$. As, we have computed above, the pull-back of the effective Cartier divisor $HVC$ to $Z$ with respect to the inclusion map $Z \rightarrow X$, will be of the form of

 $$\sum_i m_{1,i} \cdot q_{1,i} \times E_4 + m_{2,i} \cdot Pr_{1,2,3}(A) \times q_{2,i}.$$

Next we will prove that the pull-back of $H_3 \cap X$ on $Z = Pr_{1,2,3}(A) \times E_4$ will be $A$. Thus the pull-back of $(\lambda,X,M)$ will be the triple $(\beta,Z,J)$ such that $\div(\beta)_Z = A - HVC$.

We will need the following result; for proof, see \cite{JP}, Theoreme 6.3, part (4).

\begin{thm}
If X is an algebraic variety of dimension at least 2 and H is a generic hyperplane, then $H \cap X$ is irreducible.
\end{thm}

\begin{lem}
Over the surface $Z = Pr_{1,2,3}(A) \times E_4$, the divisor $D_A = H_3 \cap X$ pulls-back to $A$.
\end{lem}

\begin{proof}
$dim(Z) \geq 2$, $H_3$ is a hyperplane section, and it follows by theorem 8.3 that $Z \cap H_3$ is irreducible. Since $Z$ contains $A$, and $H_3 \cap X$ contains $A$, so $Z \cap H_3$ contains $A$. Since $Z \cap H_3$ is irreducible, we have $Z \cap H_3 = A$.
\end{proof}

Now going back to Proposition $8.2$. As $H_3 \cap X$ pull back to $A = H_1 \cap H_2 \cap H_3 \cap X$, and we have proven $HVC$ pulls back to HVC, so it follows the pull back of the triple $(\lambda,X,M)$ on $Z$ will be the triple $(\beta,Z,J)$ satisfying $\div(\beta)_Z = H_1 \cap H_2 \cap H_3 \cap X - HVC$. Thus proving Proposition 8.2.
\end{proof}

 Now we will prove Lemma 8.1.
 
 \begin{proof}
 
 Set $\overline{\zeta}_1 = (\sigma,S,L) - (\beta,Z,J)$, then as $\eta \restriction Z = 0$, so it follows 
 
 $$\underline{r}_{3,1}(\overline{\zeta}_1)(\eta) = \int_{S} \log \norm*{\sigma} \eta$$
 
 As $\div(\overline{\zeta}_1) = (A - HVC) - (A - HVC) = \pm HVC$, so it follows, we have shown the following properties:
 
\begin{enumerate}
	\item $\displaystyle \underline{r}_{3,1}(\overline{\zeta}_1)(\eta) = \int_{S} \log \norm*{\sigma} \eta$.
	\item $\div(\overline{\zeta}_1) = HVC$.
\end{enumerate}

\end{proof}

In Lemma 8.1, we choose a homological HVC representative of the curve $A$, which gives us the $HVC$ on $S$. Let us deal with a different representative. We will be working with regular higher Chow pre-cycles on $S$ that we correct by a twisted precycles. 

Recall higher Chow precycles are precycles of the form defined in 5.4, but instead of rational sections over a non-trivial line bundle, the rational sections are over the trivial line bundle. In other words, the rational sections are just rational functions over subvarieties. 

We know that $H_3 \cap X - H_4 \cap X \sim_{rat,X} 0$, where $H_4 \cap X$, is a hyperplane section, as all hyperplanes are rationally equivalent. Therefore this restricts to the following cycle $(f,S)$ on $S$, where $\div(f)_S = H_1 \cap H_2 \cap H_3 \cap X - H_1 \cap H_2 \cap H_4 \cap X = A - B$.

\begin{prop}
	The precycle $(f,S)$ can be completed to a precycle $\overline{\zeta}_2$ such that 
	\begin{enumerate}
		\item $\div(\overline{\zeta}_2) = HVC$.
		\item $\displaystyle \underline{r}_{3,1}(\overline{\zeta}_2)(\eta) =  \int_{S} \log \norm*{\sigma'} \eta$.
	\end{enumerate}
\end{prop}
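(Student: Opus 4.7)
The plan is to apply Lemma 8.1 twice --- once to the complete intersection $A = H_1 \cap H_2 \cap H_3 \cap X$ and once to $B = H_1 \cap H_2 \cap H_4 \cap X$. For generic $H_4$, the curve $B$ is a smooth irreducible complete intersection of hyperplane sections with $S$, so the hypotheses of Lemma 8.1 hold for $B$ exactly as they do for $A$. This will produce rational sections $\sigma_A \in \operatorname{Rat}^*(L_A/S)$ and $\sigma_B \in \operatorname{Rat}^*(L_B/S)$ of flat line bundles over $S$ such that $\div(\sigma_A)_S = A - HVC_A$ and $\div(\sigma_B)_S = B - HVC_B$, where $HVC_A$ and $HVC_B$ are horizontal-vertical cycles on $S$.

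I would then set
\[
\overline{\zeta}_2 \;:=\; (f,S) \;-\; (\sigma_A, S, L_A) \;+\; (\sigma_B, S, L_B).
\]
Summing divisors gives
\[
\div(\overline{\zeta}_2) \;=\; (A - B) - (A - HVC_A) + (B - HVC_B) \;=\; HVC_A - HVC_B,
\]
which is HVC and establishes item (1). For item (2), I would introduce the rational section $\sigma' := f \cdot \sigma_A^{-1} \cdot \sigma_B$ of the flat line bundle $L' := L_A^{-1} \otimes L_B$ on $S$, equipped with the natural product metric satisfying $\|\sigma'\| = |f| \cdot \|\sigma_B\| / \|\sigma_A\|$. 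Then $\log\|\sigma'\| = \log|f| - \log\|\sigma_A\| + \log\|\sigma_B\|$, and by linearity of the regulator current,
\[
\underline{r}_{3,1}(\overline{\zeta}_2)(\eta) \;=\; \int_S \log|f|\,\eta \;-\; \int_S \log\|\sigma_A\|\,\eta \;+\; \int_S \log\|\sigma_B\|\,\eta \;=\; \int_S \log\|\sigma'\|\,\eta,
\]
as claimed. The section $\sigma'$ then has $\div(\sigma')_S = HVC_A - HVC_B$, consistent with $L'$ being flat.

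The main obstacle I foresee is verifying that Lemma 8.1 applies to $B$ in exactly the same way as it did to $A$. This amounts to rerunning the proof of that lemma with the intermediate surface $Z_B = \operatorname{Pr}_{1,2,3}(B) \times E_4$ in place of $Z_A$ (and $H_4$ in place of $H_3$), and checking that the pullback of $H_4 \cap X$ and of the ambient $HVC$ cycles to $Z_B$ behaves exactly as in Lemmas 8.2 and 8.4. For generic $H_4$, these parallel arguments go through by the same Bertini and dimension-count considerations used previously. Once this is in place, the assembly of $\overline{\zeta}_2$ and the bookkeeping above complete the proof.
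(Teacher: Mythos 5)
There is a genuine gap, and it lies exactly where your construction diverges from the paper's. The paper (in the proposition that immediately follows, built on Proposition 8.2) sets $\overline{\zeta}_2 = (f,S) - (\beta_1,Z_1,J_1) + (\beta_2,Z_2,J_2)$ with $Z_1 = Pr_{1,2,3}(A)\times E_4$ and $Z_2 = Pr_{1,2,3}(B)\times E_4$. The entire point of routing the corrections through these product-type surfaces is that the transcendental $(2,2)$-form $\eta$ restricts to zero on them, so the correcting terms contribute nothing to the regulator and $\underline{r}_{3,1}(\overline{\zeta}_2)(\eta) = \int_S \log|f|\,\eta$: the regulator value of the original precycle is preserved. (The $\sigma'$ in item (2) of the statement is a slip of notation for the original datum on $S$; the paper's own proof establishes $\int_S\log\|f\|\,\eta$, and Theorem 9.1 uses the result in exactly that form.) Your version instead supports both corrections on $S$ itself, where $\eta$ does not vanish, so the regulator acquires the extra terms $-\int_S\log\|\sigma_A\|\,\eta + \int_S\log\|\sigma_B\|\,\eta$. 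These are precisely the (generically nonzero) quantities the whole machinery is designed to produce as indecomposable regulator values, and nothing prevents them from cancelling $\int_S\log|f|\,\eta$. Defining $\sigma' := f\sigma_A^{-1}\sigma_B$ after the fact makes item (2) true only tautologically and empties the proposition of its content: one can no longer conclude that nonvanishing of $\int_S\log|f|\,\eta$ yields an indecomposable twisted cycle, which is what the construction exists to show.

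Two smaller points. First, the sections $\sigma_A,\sigma_B$ on $S$ with $\div(\sigma)_S = (\text{divisor}) - HVC$ come from the corollary in Section 7 (via $\Pic^0(X)\cong\Pic^0(S)$ and homological equivalence of divisors on $X$ to $HVC$), not from Lemma 8.1, which already packages its correction through the auxiliary surface $Z$. Second, your closing paragraph correctly identifies that the argument of Proposition 8.2 must be rerun for $B$ and $Z_2 = Pr_{1,2,3}(B)\times E_4$; that is in fact the paper's route, so the fix is to keep those triples $(\beta_1,Z_1,J_1)$, $(\beta_2,Z_2,J_2)$ as the correcting terms of $\overline{\zeta}_2$ rather than pushing the corrections back onto $S$.
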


\begin{prop}
Over the surfaces $Z_1 = Pr_{1,2,3}(A) \times E_4$, and $Z_2 = Pr_{1,2,3}(B) \times E_4$, there exists triple $(\beta_1,Z,J_1)$ and $(\beta_2,Z,J_2)$ such that 

\begin{enumerate}
	\item $\div(\beta_1)_{Z_1} = A - HVC$;
	\item $\div(\beta_2)_{Z_2} = B - HVC$. 
\end{enumerate}	
\end{prop}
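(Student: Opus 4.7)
The plan is to apply, once for each of the divisors $A$ and $B$, exactly the restriction procedure used in Proposition 8.2, so this proposition is essentially two copies of that result rather than something new. The key is that $A = H_1 \cap H_2 \cap H_3 \cap X$ lies in both $Z_1 = Pr_{1,2,3}(A) \times E_4$ and in $H_3 \cap X$, and dually $B = H_1 \cap H_2 \cap H_4 \cap X$ lies in both $Z_2 = Pr_{1,2,3}(B) \times E_4$ and in $H_4 \cap X$, so each curve is cut out on its surface $Z_i$ by a single hyperplane section of $X$.

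First I would produce ambient triples on $X$. By Proposition 7.4 together with Corollary 7.5, each hyperplane section $H_i \cap X$ is homologically equivalent to a horizontal and vertical cycle on $X$, so there exist triples $(\lambda_A, X, M_A)$ and $(\lambda_B, X, M_B)$ on $X$ with
\begin{equation*}
\div(\lambda_A)_X = H_3 \cap X - HVC, \qquad \div(\lambda_B)_X = H_4 \cap X - HVC.
\end{equation*}
Then I would pull back each triple along the inclusions $Z_i \hookrightarrow X$, passing through a desingularization $\tilde Z_i \to Z_i$ and pushing forward if $Z_i$ fails to be smooth, exactly as in Proposition 8.2. Call the resulting triples $(\beta_1, Z_1, J_1)$ and $(\beta_2, Z_2, J_2)$.

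Next I would identify the pullback of $H_3 \cap X$ to $Z_1$ as the curve $A$ itself, and similarly for $H_4 \cap X$ on $Z_2$. Here I invoke Lemma 8.4 in the same form used before: since $\dim Z_1 \ge 2$ and $H_3$ is a generic hyperplane, Theorem 8.3 gives that $H_3 \cap Z_1$ is irreducible; since $A$ is contained in both $Z_1$ and $H_3 \cap X$, it is contained in $H_3 \cap Z_1$, and irreducibility forces $H_3 \cap Z_1 = A$. The identical argument with $(H_4, B, Z_2)$ replacing $(H_3, A, Z_1)$ gives $H_4 \cap Z_2 = B$. For the $HVC$ part of each divisor, I would repeat the dimension bookkeeping from the proof of Proposition 8.2: the three components $n_i \cdot p_i \times \prod_{j\neq i} E_j$ with $i \in \{1,2,3\}$ pull back to zero-cycles of the form $q \times E_4$ on $Z_i$ (since $Pr_{1,2,3}$ sends $Z_i$ onto a curve), and the remaining component $n_4 \cdot E_1\times E_2\times E_3 \times p_4$ pulls back to $n_4 \cdot Pr_{1,2,3}(\cdot) \times p_4$, which again has the shape demanded by Definition 7.7. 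Combining these gives $\div(\beta_1)_{Z_1} = A - HVC$ and $\div(\beta_2)_{Z_2} = B - HVC$.

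The only real obstacle, as in Proposition 8.2, is the irreducibility step: without it the pullback of $H_3 \cap X$ to $Z_1$ could acquire spurious vertical components supported inside an auxiliary surface such as $S'$, and one would be forced to absorb these into additional $HVC$ corrections. Genericity of the hyperplane sections $H_1, H_2, H_3, H_4$ and Theorem 8.3 eliminate this possibility, so the argument goes through cleanly in both copies and the two triples $(\beta_1, Z_1, J_1)$, $(\beta_2, Z_2, J_2)$ with the advertised divisors are obtained.
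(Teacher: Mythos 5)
Your proposal is correct and follows essentially the same route as the paper: the paper's own proof simply invokes Proposition 8.2 for $Z_1$ and says ``by a similar argument'' for $Z_2$, which is exactly the two-copies-of-the-restriction-argument structure you spell out. The extra detail you provide (the ambient triples on $X$, the irreducibility step identifying $H_3\cap Z_1 = A$ and $H_4\cap Z_2 = B$, and the bookkeeping for the pullback of the $HVC$ components) is just an unpacking of what the paper leaves implicit.
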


\begin{proof}
Let $Z_1 = Pr_{1,2,3}(A) \times E_4$. From Proposition 8.2, we can construct the precycle $(\beta_1,Z_1,J_1)$ where $\div(\beta_1)_{Z_1} =A - HVC$. Set $Z_2 = Pr_{1,2,3}(B) \times E_4$, by a similar argument as Proposition 8.2, we can construct triple $(\beta_2,Z_2,M_2)$ such that $\div(\beta_2)_{Z_2} = B - HVC$. 

Hence if we set the precycle 

\begin{align}
	\overline{\zeta_2} = (f, S) - (\beta_1,Z_1,J_1) + (\beta_2,Z_2,J_2)
\end{align}

Then if follows that this precycle satisfies the following

\begin{align}
	\div(\overline{\zeta_2}) &= HVC \\
	\underline{r}_{3,1}(\overline{\zeta}_2)(\eta) &= \int_{S} \log \norm*{f} \eta
\end{align}
	
\end{proof}

\end{section}

\begin{section}{Main result}

Finally we record everything as the following theorem that we get as a corollary of our above construction. This tells us that any precycle $\sum_i (\sigma_i,S,L_i)$ that is induced from hyperplane section can be completed to twisted cycle, where $\div(\sigma_i)_S = H_i \cap S - HVC$ or $\div(\sigma_i)_S = H_i \cap S - H_j \cap S$.

\begin{thm}
Any precycle $\sum_i (\sigma_i,S,L_i)$ that is induced from hyperplane sections on $S$ can be corrected to twisted cycle $\zeta$ such that 
\begin{align}
	\underline{r}_{3,1}(\zeta)(\eta) =  \int_{S} \log \norm*{\sigma_i} \eta
\end{align}

\end{thm}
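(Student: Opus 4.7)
The plan is to reduce the theorem to the two case studies already treated in Section 8 by decomposing the input precycle along the two possible types of divisors allowed by the hypothesis, correcting each summand individually, and then handling the leftover HVC divisor by a final flat-bundle argument on subvarieties where $\eta$ restricts to zero.

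First, I would note that a precycle induced from hyperplane sections on $S$ is a finite $\mathbb{Z}$-linear combination of atomic pieces of two kinds: those with $\div(\sigma_i)_S = H_i\cap S - HVC$ (produced by choosing a homological $HVC$-representative, as in Corollary 7.8) and those with $\div(f_i)_S = H_i\cap S - H_j\cap S$ (coming from rational equivalence of two hyperplane sections on $X$ restricted to $S$). To each piece of the first kind I would apply Lemma 8.1 to obtain a corrected precycle $\overline{\zeta}_i^{(1)}$ with $\div(\overline{\zeta}_i^{(1)}) = HVC$ and regulator value $\int_S \log\|\sigma_i\|\,\eta$; to each piece of the second kind I would apply Proposition 8.5 to obtain $\overline{\zeta}_i^{(2)}$ satisfying the analogous conclusions. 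Summing, the precycle $\overline{\zeta} := \sum_i \overline{\zeta}_i$ satisfies $\underline{r}_{3,1}(\overline{\zeta})(\eta) = \sum_i \int_S \log\|\sigma_i\|\,\eta$ by linearity of the regulator, while its divisor is a $\mathbb{Z}$-combination of $HVC$ components.

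Next, I would remove the residual $HVC$ divisor. Each component of $HVC$ is of the form $p\times V$ where $V$ is a subvariety of a complementary projection factor (e.g.\ $V\subset E_2\times E_3\times E_4$ with $p\in E_1$, or the analogous patterns from Definition 7.6). By Proposition 7.5 applied to the relevant factor, any such $p\times V$ is homologically equivalent to a zero cycle on a point times a curve, so there exist triples $(\tau_k,W_k,M_k)$ with $W_k$ a horizontal/vertical surface through $p\times V$ and $\div(\tau_k)_{W_k}$ equal to the offending $HVC$ components. Adding $\sum_k (\tau_k,W_k,M_k)$ to $\overline{\zeta}$ produces the desired twisted cycle $\zeta \in \underline{z}^3(X,1)$.

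The final point to check is that this last correction preserves the regulator value. This is where the Künneth decomposition of $\eta \in H^{1,1}(X,\mathbb{R})$ on $X = E_1\times E_2\times E_3\times E_4$ enters: for a transcendental class $\eta$, its restriction $\eta|_{W_k}$ to any horizontal/vertical surface $W_k = \{p\}\times V'$ or $V'\times\{p\}$ vanishes, exactly as in the surface argument of Lemma 4.2. Hence each added term contributes $\int_{W_k}\log\|\tau_k\|\,\eta|_{W_k} = 0$, and the regulator value of $\zeta$ agrees with that of $\overline{\zeta}$. The main obstacle I anticipate is purely bookkeeping: organising the various $HVC$-supporting surfaces $W_k$ compatibly with the output of Lemma 8.1 and Proposition 8.5 so that the divisors cancel precisely, and verifying that the flat line bundles $M_k$ produced by Proposition 7.5 can be chosen so that the corresponding metrics are compatible with the flat metrics used in the earlier steps; this is essentially the same technical point handled in the independence-of-metric remark of Section 1 and in the proof of Proposition 6.6.
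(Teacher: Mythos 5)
Your first step coincides with the paper's: split the precycle into pieces with $\div = H_i\cap S - HVC$ (handled by Lemma 8.1) and pieces with $\div = H_i\cap S - H_j\cap S$ (handled by Proposition 8.5/8.6), sum up, and observe that the residual divisor is a combination of $HVC$ components supported on surfaces killing $\eta$. The gap is in your final step, which is where essentially all of the remaining work in the paper's proof lives. You assert that for each ``offending $HVC$ component'' there is a triple $(\tau_k,W_k,M_k)$ with $\div(\tau_k)_{W_k}$ equal to that component. This cannot be right as stated: a single $HVC$ component such as $a_{1,i}\times a_{2,i}\times a_{3,i}\times E_4$ is an effective divisor on any surface containing it, and an effective nonzero divisor is never the divisor of a rational section of a \emph{flat} line bundle (its class in $H^2$ is nonzero). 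The correction must be organised so that the components cancel in homologically trivial \emph{pairs}, and Proposition 7.5 (which concerns divisors on the fourfold $X$) does not produce the required triples on the auxiliary surfaces.

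Concretely, the paper's residual divisor consists of two kinds of differences. The terms $n_i(a_i\times E_4 - b_i\times E_4)$ with $a_i,b_i\in E_1\times E_2\times E_3$ are corrected by a Bertini argument: one finds a smooth curve $C_i\subset E_1\times E_2\times E_3$ through both points, uses that points on a smooth curve are homologically equivalent to get $(\gamma_i,C_i,R_i)$ with $\div(\gamma_i)_{C_i}=a_i-b_i$, and works on $C_i\times E_4$ where $\eta$ restricts to zero. The terms $n_i\bigl(Pr_{1,2,3}(A)\times a_{4,i} - Pr_{1,2,3}(B)\times b_{4,i}\bigr)$ are harder because the curve factors $Pr_{1,2,3}(A)$ and $Pr_{1,2,3}(B)$ differ; the paper corrects them in two steps, first moving $a_{4,i}$ to $b_{4,i}$ via a flat bundle on $E_4$ (working on $Pr_{1,2,3}(A)\times E_4$), and then invoking the \emph{rational} equivalence $Pr_{1,2,3}(A)\sim_{\rm rat}Pr_{1,2,3}(B)$, pushed forward from $A\sim_{\rm rat}B$, i.e.\ ultimately from $H_3\sim_{\rm rat}H_4$, to produce a rational function $g_i$ on $Pr_{1,2,3}(S)\times b_{4,i}$. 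Your description in terms of ``a horizontal/vertical surface through $p\times V$'' misses both the pairing and this rational-equivalence input, so as written the proposal does not close the divisor to zero. The observation that all auxiliary surfaces kill the $(2,2)$ form $\eta$, and hence that the regulator value is preserved, is correct and is the same as in the paper.
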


\begin{proof}
By the hyperplane arrangements, we have there exists $\overline{\zeta}$ such that $\div(\overline{\zeta})$ are horizontal and vertical cycle, and moreover, $\displaystyle \underline{r}_{3,1}(\overline{\zeta})(\eta) = \sum \int_{S} \log \norm*{\sigma_i} \eta$. Since the divisor of the precycle $\overline{\zeta}$ are horizontal and vertical cycles, so we can correct the precycle $\overline{\zeta}$ using surfaces that don't support $(2,2)$ form, so the completion of $\overline{\zeta}$, which is $\zeta$ will satisfy $(5)$. For completeness sake, let us construct the twisted cycle explicitly. 

We will assume that the harder case, where we are dealing with rational function $f$ on $S$ such that $\div(f)_S = A - B$, where $A = H_3 \cap S = H_1 \cap H_2 \cap H_3 \cap X$, and $B = H_1 \cap H_2 \cap H_4 \cap X$, where $H_i$ are hyperplane sections (The other case, where $B = HVC$ can be dealt with using a similar argument).

By Proposition 8.2, if we set $\overline{\zeta}_2 = (f,S) - (\beta_1,Z_1,J_1) + (\beta_2,Z_2,J_2)$, then we have that $\div(\overline{\zeta}_2) = HVC$, where $Z_1 = Pr_{1,2,3}(A) \times E_4$, and $Z_2 = Pr_{1,2,3}(B) \times E_4$. Let us explicitly compute the $HVC$. First of all we have the following equations

\begin{align}
	\div(\beta_1)_{Z_1} &= A - \sum m_{1,i} a_{1,i} \times a_{2,i} \times a_{3,i} \times E_4 + m_{2,i} Pr_{1,2,3}(A) \times a_{4,i} \\
	\div(\beta_2)_{Z_2} &= B - \sum m_{1,i} b_{1,i} \times b_{2,i} \times b_{3,i} \times E_4 + m_{2,i} Pr_{1,2,3}(A) \times b_{4,i}
\end{align}

We have assumed that the $HVC$ over $Z_1$ and $Z_2$ have same multiplicity just for simplicity, otherwise we could just multiply everything out, so wlog we can assume they have the same multiplicity. 

From $(6)$ and $(7)$, we have that 

\begin{align}
	\div(\overline{\zeta}_2) = &\sum_i n_{1,i} \cdot \biggl(a_{1,i} \times a_{2,i} \times a_{3,i} \times E_4 - b_{1,i} \times b_{2,i} \times b_{3,i} \times E_4 \biggl) + \\
	&\sum_i n_{2,i} \cdot \biggl(Pr_{1,2,3}(A) \times a_{4,i} - Pr_{1,2,3}(B) \times b_{4,i} \biggl)
\end{align}

We can can also assume that $n_i = n_{1,i} = n_{2,i}$, as we can apply the argument below to each multiplicity. First we will correct the component given in $(8)$, by an argument invoking Bertini theorem, we can find a smooth curve $C_i \subset E_1 \times E_2 \times E_3$ containing the points $a_i = a_{1,i} \times a_{2,i} \times a_{3,i}$ and $b_i = b_{1,i} \times b_{2,i} \times b_{3,i}$. All points over a curve are homologically equivalent, and since $C_i$ is smooth, so it follows that $a_i \sim_{hom,C_i} b_i$. Therefore there exists triple $(\gamma_i,C_i,R_i)$ such that $\div(\gamma_i)_{C_i} = a_i - b_i$, from which it follows that 

\begin{align}
	\div(\gamma_i^{n_i})_{C_i \times E_4} &= n_i \cdot \biggl( a_i \times E_4 - b_i \times E_4 \biggl) \\ 
	&= n_i \cdot \biggl(  a_{1,i} \times a_{2,i} \times a_{3,i} \times E_4 - b_{1,i} \times b_{2,i} \times b_{3,i} \times E_4 \biggl)
\end{align}

If we set $\zeta_2 = (f,S) - (\beta_1,Z_1,J_1) + (\beta_2,Z_2,J_2) - \sum (\gamma_i^{n_i},C_i \times E_4,R_i)$, first we note $C_i \times E_4$ doesn't support $(2,2)$ real harmonic form $\eta$, that is we have that $\eta \restriction C_i \times E_4 = 0$. Now if we calculate the divisor of $\zeta_2$, we have the following  

\begin{align}
	\div(\zeta_2) = \sum_i n_i \cdot \biggl(Pr_{1,2,3}(A) \times a_{4,i} - Pr_{1,2,3}(B) \times b_{4,i} \biggl)
\end{align}

We will correct $(12)$ following two step process. First, we have that over $E_4$ we have $a_{4,i} \sim_{hom,E_4} b_{4,i}$. Since $E_4$ is smooth, so there exists triple $(\nu_i,E_4,F_i)$, such that $\div(\nu_i)_{E_4} = a_{4,i} - b_{4,i}$. Therefore 

\begin{align}
	\div(\nu_i^{n_i})_{Pr_{1,2,3}(A) \times E_4} = n_i \cdot \biggl(Pr_{1,2,3}(A) \times a_{4,i} - Pr_{1,2,3} \times b_{4,i} \biggl)
\end{align}

Therefore set $\zeta_1 = \zeta_2 - \sum_i (\nu_i^{n_i},Pr_{1,2,3}(A) \times E_4,F_i)$, again we notice that $\eta \restriction Pr_{1,2,3}(A) \times E_4 = 0$, and from $(13)$ we have the following computation 

\begin{align}
\div(\zeta_1) = \sum_{i} n_i \cdot \biggl( Pr_{1,2,3}(A) \times b_{4,i} - Pr_{1,2,3}(B) \times b_{4,i} \biggl)
\end{align}

We are now ready to correct $(14)$ to zero, first of all we know that over $X$, we have $H_3 \cap X \sim_{rat,X} H_4 \cap X$, as any two hyperplanes are rationally equivalent, from which we get that $A \sim_{rat,S} B$, and from the push forward we have 

\begin{align*}
	Pr_{1,2,3}(A) &\sim_{rat,Pr_{1,2,3}(S)} Pr_{1,2,3}(B) \implies \\
	&Pr_{1,2,3}(A) \times b_{4,i} \sim_{rat,Pr_{1,2,3}(S) \times b_{4,i}} Pr_{1,2,3}(B) \times b_{4,i}.
\end{align*}

Therefore there exists $(g_i,Pr_{1,2,3}(S) \times b_{4,i})$ such that $div(g_i)_{Pr_{1,2,3}(S) \times b_{4,i}} = Pr_{1,2,3}(A) \times b_{4,i} - Pr_{1,2,3}(B) \times b_{4,i}$. Since $b_{4,i}$ are points, so $\eta \restriction Pr_{1,2,3}(S) \times b_{4,i} = 0$.

$\newline$

$\newline$Finally, if we set $\zeta$ to be defined as follows
\begin{align}
\zeta &= (f,S) - (\beta_1,Z_1,J_1) + (\beta_2,Z_2,J_2) - \sum_i (\gamma_i^{n_i},C_i \times E_4,R_i)\\
&- \sum_i (\nu_i^{n_i},Pr_{1,2,3}(A) \times E_4,F_i) - \sum_i (g_i,Pr_{1,2,3}(S) \times b_{4,i})
\end{align}

We have $\div(\zeta) = 0$, and we also have all surfaces in $\zeta$ doesn't support $(2,2)$ real form except $S$, so it follows that 

$$r_{3,1}(\zeta)(\eta) = \int_{S} \log \abs{f} \eta$$

By extending this argument to multiple precycles, we establish the theorem. 
\end{proof}

\begin{cor}
Suppose that we have a precycle $(\sigma,S,L)$ such that $\div(\sigma)_S = A - B$, such that the cycles $A$ and $B$ satisfy:

\begin{itemize}
	\item $A$ is a complete intersection.
	\item $B$ is a complete intersection or HVC.
\end{itemize}

$\newline$
Further, suppose that there exists $(2,2)$ real harmonic form $\eta$, where the regulator is non-trivial, i.e. 

$$\int_{S} \log \norm*{\sigma} \eta \neq 0.$$

$\newline$Then we can complete the triple $(\sigma,S,L)$ to twisted cycle $\zeta$, where $\zeta$ satisfies 

$$\underline{r}_{3,1}(\zeta)(\eta) = \int_{S} \log \norm*{\sigma} \eta \neq 0.$$

\end{cor}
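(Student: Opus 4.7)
The plan is to deduce the Corollary directly from Theorem~9.1 by splitting on the two possibilities for $B$. In each case the precycle $(\sigma, S, L)$ is already of the hyperplane-section-induced type handled by the construction of Section~8, and the HVC-correction procedure from the proof of Theorem~9.1 can then be invoked to absorb the remaining horizontal and vertical divisors without disturbing the regulator value against $\eta$.

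First I would write $S = H_1 \cap H_2 \cap X$ and $A = H_1 \cap H_2 \cap H_3 \cap X$, exhibiting $A$ as the complete-intersection input to Proposition~8.2. If $B$ is $HVC$, Lemma~8.1 already produces a precycle $\overline{\zeta}_1$ satisfying $\div(\overline{\zeta}_1) \in HVC$ and $\underline{r}_{3,1}(\overline{\zeta}_1)(\eta) = \int_S \log \|\sigma\|\,\eta$; its correction term is supported on $Z = \Pr_{1,2,3}(A) \times E_4$, on which the transcendental part of $\eta$ restricts to zero by a K\"unneth argument. If instead $B = H_1 \cap H_2 \cap H_4 \cap X$ is another complete intersection, Proposition~8.6 provides
\[
\overline{\zeta}_2 \;=\; (\sigma, S, L) - (\beta_A, Z_A, J_A) + (\beta_B, Z_B, J_B),
\]
again with $\div(\overline{\zeta}_2) \in HVC$ and correction terms supported on the auxiliary surfaces $Z_A = \Pr_{1,2,3}(A) \times E_4$ and $Z_B = \Pr_{1,2,3}(B) \times E_4$, on which $\eta$ once more restricts to zero.

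The next step is to feed $\overline{\zeta}$ (whichever of $\overline{\zeta}_1$ or $\overline{\zeta}_2$ is relevant) into the three-step HVC correction that appears in the proof of Theorem~9.1. The divisor $\div(\overline{\zeta})$ splits into components of type $p \times E_j \times E_k \times E_\ell$ and of type $\Pr_{1,2,3}(A) \times q_i$. For each pair of offending points I would invoke Bertini to select a smooth auxiliary curve $C_i \subset E_1 \times E_2 \times E_3$ through the relevant $a_i$ and $b_i$, and use the fact that any two points on a smooth curve are homologically equivalent to produce flat line bundles $R_i$ with rational sections $\gamma_i$ cancelling that family of components; the remaining components are cancelled by analogous sections on $E_4$, and finally by the rational equivalence $A \sim_{\text{rat},S} B$. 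All corrections introduced in this step are supported on surfaces of the form $C_i \times E_4$, $\Pr_{1,2,3}(A) \times E_4$, or $\Pr_{1,2,3}(S) \times b_{4,i}$.

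The main obstacle, and really the only non-cosmetic point, is verifying that each auxiliary surface produced in the HVC correction truly annihilates $\eta$. This reduces to a K\"unneth check on $X = E_1 \times E_2 \times E_3 \times E_4$: the transcendental $(2,2)$ Hodge classes live in Hodge summands mixing all four elliptic factors, whereas every auxiliary surface factors as a curve in at most three of those factors together with a point or a full elliptic curve in the remaining ones, so the restriction of $\eta$ lands in a K\"unneth summand that vanishes for transcendental $\eta$. Granting this, additivity of the regulator current yields
\[
\underline{r}_{3,1}(\zeta)(\eta) \;=\; \int_S \log \|\sigma\|\,\eta \;\neq\; 0
\]
for the closed twisted cycle $\zeta$ obtained after all corrections, which is precisely the conclusion of the Corollary.
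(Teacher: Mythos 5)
Your proposal is correct and follows essentially the same route as the paper: the Corollary is stated there as an immediate consequence of Theorem 9.1, whose proof is exactly the two-case reduction (via Lemma 8.1 for $B = HVC$ and Proposition 8.6 for $B$ a complete intersection) followed by the three-step HVC correction on auxiliary surfaces annihilating $\eta$. Your explicit K\"unneth justification for why each auxiliary surface kills the transcendental $(2,2)$ form is a point the paper asserts without elaboration, so spelling it out is a welcome addition rather than a divergence.
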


\begin{rem}
We note the following remark the idea of constructing the indecomposable relies on the following facts:

	\begin{itemize}
		\item Complete intersections behave well homologically, or you could think linear algebraically.
		\item They move easily.
	\end{itemize}
\end{rem}

\begin{rem}
For future work, we will generalize this idea to the product of elliptic curves, construct indecomposables, generalize to the product of curves, and use that to prove the Twisted Hodge D conjecture and establish the strong version of this conjecture, as well as the strong version of this conjecture for smooth projective surfaces.
\end{rem}

\end{section}

\end{document}